\tikzset{->-/.style={decoration={
			markings,
			mark=at position #1 with {\arrow{>}}},postaction={decorate}}}
\tikzset{-<-/.style={decoration={
					markings,
					mark=at position #1 with {\arrow{<}}},postaction={decorate}}}
\DeclareFontFamily{T1}{cbgreek}{}
\DeclareFontShape{T1}{cbgreek}{m}{n}{<-6>  grmn0500 <6-7> grmn0600 <7-8> grmn0700 <8-9> grmn0800 <9-10> grmn0900 <10-12> grmn1000 <12-17> grmn1200 <17-> grmn1728}{}
\DeclareSymbolFont{quadratics}{T1}{cbgreek}{m}{n}
\DeclareMathSymbol{\qoppa}{\mathord}{quadratics}{19}
\DeclareMathSymbol{\Qoppa}{\mathord}{quadratics}{21}
\newcommand{\bC}{\mathbb{C}}
\newcommand{\bF}{\mathbb{F}}
\newcommand{\bL}{\mathbb{L}}
\newcommand{\bN}{\mathbb{N}}
\newcommand{\bQ}{\mathbb{Q}}
\newcommand{\bR}{\mathbb{R}}
\newcommand{\bZ}{\mathbb{Z}}
\newcommand{\cL}{\mathcal{L}}
\newcommand\lra{\longrightarrow}
\newcommand\colim{\operatorname*{colim}}
\newcommand\hocolim{\operatorname*{hocolim}}
\newcommand\holim{\operatorname*{holim}}
\newcommand{\Hom}{\mathrm{Hom}}
\newcommand{\Aut}{\mathrm{Aut}}
\renewcommand{\epsilon}{\varepsilon}
\newcommand{\Sq}{\mathrm{Sq}}
\newcommand{\sign}{\mathrm{sign}}
\newcommand{\Mock}{\mathrm{Mock}}
\newcommand{\E}{\mathrm{E}}
\newcommand{\KO}{\mathrm{KO}}
\newcommand{\K}{\mathrm{K}}
\newcommand{\MSO}{\mathrm{MSO}}
\newcommand{\MSPL}{\mathrm{MSPL}}
\newcommand{\MSTop}{\mathrm{MSTop}}
\newcommand{\LL}{\mathrm{L}}
\renewcommand{\H}{\mathrm{H}}
\newcommand{\GW}{\mathrm{GW}}
\newcommand{\Sph}{\mathrm{S}}
\mathchardef\ordinarycolon\mathcode`\:
\theoremstyle{plain}
\newtheorem{theorem}{Theorem}[section]
\newtheorem{proposition}[theorem]{Proposition}
\newtheorem{lemma}[theorem]{Lemma}
\newtheorem{corollary}[theorem]{Corollary}
\theoremstyle{definition}
\newtheorem{definition}[theorem]{Definition}
\newtheorem{example}[theorem]{Example}
\theoremstyle{remark}
\newtheorem{remark}[theorem]{Remark}
\newtheorem*{remark*}{Remark}
\numberwithin{equation}{section}
\title{The family signature theorem}
\author{Oscar Randal-Williams}
\email{o.randal-williams@dpmms.cam.ac.uk}
\address{Centre for Mathematical Sciences\\
Wilberforce Road\\
Cambridge CB3 0WB\\
UK}
\begin{document}
\begin{abstract}
We discuss several versions of the Family Signature Theorem: in rational cohomology using ideas of Meyer, in $KO[\tfrac{1}{2}]$-theory using ideas of Sullivan, and finally in symmetric $L$-theory using ideas of Ranicki. Employing recent developments in Grothendieck--Witt theory, we give a quite complete analysis of the resulting invariants. As an application we prove that the signature is multiplicative modulo 4 for fibrations of oriented Poincar{\'e} complexes, generalising a result of Hambleton, Korzeniewski and
Ranicki, and discuss the multiplicativity of the de Rham invariant.
\end{abstract}
\maketitle

\vspace{-1ex}

\tableofcontents

\section{Introduction}

For an oriented family of manifolds $\pi : E \to B$, a family signature theorem is an equation between two classes in the (generalised) cohomology of $B$. The first should be local in the total space $E$, and arise as the image under a fibre-integration map of a class on $E$ which is locally defined. The second should depend only on the local coefficient system on $B$ given by the middle cohomology of the fibres, equipped with its intersection (or linking) form. When the fibres have dimension $4k$ and $B$ is a point, it should reduce to Hirzebruch's signature theorem.

We will discuss several implementations of this idea, in increasing levels of sophistication: firstly in rational cohomology, then in real $K$-theory localised away from 2, and finally in symmetric $L$-theory of the integers. Our main contribution is to do so in what we feel is the correct generality, by interpreting ``family of manifolds'' to mean ``topological block bundle'', i.e.\ block bundles with topological manifold fibres. Under more restrictive conditions the results are (well) known: for smooth fibre bundles the rational cohomology statement is due to Atiyah \cite[eq.\ (4.3)]{Atiyah}, and for PL fibre bundles the $L$-theory statement was given by L{\"u}ck and Ranicki \cite[p.\ 184]{LuckRanicki} and its proof outlined. As the statement for smooth block bundles in rational cohomology has seen some recent use \cite{EbertReinhold}, and the author will soon need the statement for topological fibre bundles in $L$-theory \cite{GRWPontAlgInd}, it seems appropriate to give a detailed stand-alone account.

The highlights are: the three forms of the Family Signature Theorem (Theorems \ref{thm:FSTQ}, \ref{thm:FSTSullivan}, \ref{thm:FSTLThy}); triviality of the family signature for families of odd-dimensional manifolds (Theorem \ref{cor:OddFSIsTrivial});  multiplicativity of the signature modulo 4 for local systems over topological manifolds (Corollary \ref{cor:SigMultMeyer}) and for fibrations of Poincar{\'e} complexes (Corollary \ref{cor:MultSigPoinc}); an analysis of the (non)multiplicativity of the de Rham invariant (Section \ref{sec:dRmult}).

\vspace{2ex}

\noindent\textbf{Acknowledgements}. I would like to thank J.\ Ebert, F.\ Hebestreit, and especially M.\ Land for useful discussions, and the anonymous referee for their helpful remarks. I have been supported by the ERC under the European Union's Horizon 2020 research and innovation programme (grant agreement No.\ 756444) and by a Philip Leverhulme Prize from the Leverhulme Trust.

\section{Meyer}

We first give an exposition of the work of Meyer \cite{Meyer}, and use it to prove a form of the family signature theorem for topological block bundles in rational cohomology. We also use it to prove the multiplicativity of the signature modulo 4 for local systems of $(-1)^n$-symmetric forms over a topological manifold.

\subsection{Twisted signatures}

Suppose that $n \in \bN$, $H_\bR$ is a real vector space and $\lambda : H_\bR \otimes H_\bR \to \bR$ is a $(-1)^n$-symmetric bilinear form which is nondegenerate, i.e.\ such that the adjoint map $\lambda^\mathrm{ad} : H_\bR \to \mathrm{Hom}_\bR(H_\bR, \bR)$ is an isomorphism. Let $\Aut(H_\bR, \lambda) \leq GL(H_\bR)$ denote the subgroup of those automorphisms of $H_\bR$ which preserve the form $\lambda$, considered as a discrete group. There is a corresponding flat vector bundle, or local coefficient system, $\mathcal{H}_\bR \to B\Aut(H_\bR, \lambda)$, equipped with a nondegenerate $(-1)^n$-symmetric fibrewise bilinear form.

If $M^{4k-2n}$ is an oriented closed manifold and $f : M \to B\Aut(H_\bR, \lambda)$ is a map, then the bilinear form
\begin{align*}
H^{2k-n}(M ; f^*\mathcal{H}_\bR) \otimes H^{2k-n}(M ; f^*\mathcal{H}_\bR) &\overset{\smile}\lra H^{4k-2n}(M ; f^*(\mathcal{H}_\bR \otimes \mathcal{H}_\bR))\\
& \overset{\lambda}\lra H^{4k-2n}(M;\bR) \overset{\int_M}\lra \bR,
\end{align*}
is symmetric because the cup product and $\lambda$ are either both symmetric or both antisymmetric. We may therefore take its signature
$$\sigma(M; f) := \sigma(H^{2k-n}(M ; f^*\mathcal{H}_\bR)) \in \bZ,$$
and Meyer proves that this depends only on the oriented cobordism class of the map $f : M^{4k-2n} \to B\Aut(H_\bR, \lambda)$. This is the \emph{twisted signature} of $(M;f)$.

\subsection{Meyer's formula for twisted signatures}\label{sec:MeyerFormula}

Given the flat vector bundle $\mathcal{H}_\bR \to B\Aut(H_\bR, \lambda)$ and the nondegenerate $(-1)^n$-symmetric fibrewise bilinear form induced by $\lambda$, we may choose a Riemannian metric $\langle -, - \rangle$ on this bundle and hence define a fibrewise operator $A$ by $\lambda(x,y) = \langle x, A y \rangle$ satisfying $A^* = (-1)^n A$, and use this to form a complex $K$-theory class $\xi \in K^0(B\Aut(H_\bR, \lambda))$ as follows (see \cite[\S 1]{Meyer}, \cite[pp.\ 478-9]{AtiyahSingerIII}):
\begin{enumerate}[(i)]
\item If $n$ is even then $A$ is self-adjoint and its positive and negative eigenspaces give a decomposition $\mathcal{H}_\bR^+ \oplus \mathcal{H}_\bR^-$ of $\mathcal{H}_\bR$, and we set
$$\xi := (\mathcal{H}_\bR^+ - \mathcal{H}_\bR^-) \otimes \bC \in K^0(B\Aut(H_\bR, \lambda)).$$

\item If $n$ is odd then $A$ is skew-adjoint and $J := A/\sqrt{A A^*}$, formed using the positive square root of $AA^*$,  determines a complex structure on $\mathcal{H}_\bR$, and we set
$$\xi := \overline{\mathcal{H}}_\bR - \mathcal{H}_\bR \in K^0(B\Aut(H_\bR, \lambda)),$$
where $\overline{\mathcal{H}}_\bR$ denotes the complex conjugate bundle.
\end{enumerate}
As the Riemannian metric is not adapted to the flat structure of $\mathcal{H}_\bR$, these virtual vector bundles need not be flat and can have interesting Chern classes. For $f : M \to B\Aut(H_\bR, \lambda)$ with $M$ an oriented smooth manifold Meyer establishes the following formula 
\begin{equation}\label{eq:MayerFormula}
\sigma(M; f) = \int_M \mathrm{ch}(\psi^2(f^*\xi)) \cdot \mathcal{L}(TM),
\end{equation}
by applying the index theorem to the signature operator of $M$ twisted in a certain way by the bundle $f^*\mathcal{H}_\bR$, using its nondegenerate $(-1)^n$-symmetric fibrewise bilinear form. Here $\mathrm{ch}$ denotes the Chern character, $\psi^2$ denotes the second Adams operation, and $\mathcal{L}$ denotes the Hirzebruch $L$-class.

\begin{remark}\label{rem:RealXi}
For later use, we observe that the complex $K$-theory classes $\xi$ have refinements to real $K$-theory classes $\xi_\bR$. Namely
\begin{enumerate}[(i)]
\item If $n$ is even then $\xi$ is visibly the complexification of
$$\xi_\bR := \mathcal{H}_\bR^+ - \mathcal{H}_\bR^- \in KO^{0}(B\Aut(H_\bR, \lambda)).$$

\item If $n$ is odd then writing $c : KO^i(-) \to K^i(-)$ and $r : K^i(-) \to KO^i(-)$ for the complexification and realification maps, and $b \in K^{-2}(*)$ for the Bott class, we have $b \cdot \xi = c(r(b \cdot \overline{\mathcal{H}}_\bR))$, so $b \cdot \xi$ is the complexification of
$$\xi_\bR := r(b \cdot \overline{\mathcal{H}}_\bR) \in KO^{-2}(B\Aut(H_\bR, \lambda)).$$
\end{enumerate}
\end{remark}

\subsection{Divisibility and multiplicativity of the signature}

It does not seem to be well-known that the discussion so far can be used to establish multiplicativity of the signature modulo 4, for example recovering and in fact strengthening the main theorem of \cite{HKR}. Although this is not our main goal, we take a brief excursion in this and the following subsection to explain how. Related ideas will arise in Section \ref{sec:AddendaEven} where we investigate the delicate 2-local structure of the $L$-theoretic family signature theorem.

\begin{lemma}\label{lem:MayerTopological}
The identity \eqref{eq:MayerFormula} holds even if $M$ is an oriented topological manifold.
\end{lemma}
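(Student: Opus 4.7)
The plan is to deduce the formula for a topological $M$ from Meyer's formula for smooth $M$ by a bordism invariance argument: both sides of \eqref{eq:MayerFormula} make sense and are bordism invariants in the topological category, and the map $\MSO \to \MSTop$ is a rational equivalence, so any $\bQ$-valued topological cobordism invariant is determined by its values on smooth manifolds.

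First I would check that both sides of \eqref{eq:MayerFormula} make sense for an oriented topological $M$. The twisted signature $\sigma(M;f)$ is defined purely through Poincar{\'e} duality and the cup product on $H^{2k-n}(M;f^*\mathcal{H}_\bR)$, so it requires no smooth structure. On the right-hand side the class $\mathrm{ch}(\psi^2(f^*\xi))$ is pulled back from $B\Aut(H_\bR,\lambda)$ and so requires nothing of $M$ at all, while $\mathcal{L}(TM) \in H^*(M;\bQ)$ is well defined by Novikov's theorem on the topological invariance of rational Pontryagin classes, applied to the topological tangent microbundle.

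Next I would verify that both sides define homomorphisms $\Omega^{\mathrm{STop}}_{4k-2n}(B\Aut(H_\bR,\lambda)) \to \bR$. The right-hand side is the pairing of a rational characteristic class on $M$ with the fundamental class $[M]$, hence manifestly a topological bordism invariant. For the left-hand side, Meyer's argument for bordism invariance of the twisted signature goes through verbatim: given a compact oriented topological cobordism $W$ equipped with an extension $F:W \to B\Aut(H_\bR,\lambda)$ of $f$ on $\partial W$, topological Poincar{\'e}--Lefschetz duality applied to the long exact sequence of $(W,\partial W)$ identifies the image of $H^{2k-n}(W,\partial W;F^*\mathcal{H}_\bR) \to H^{2k-n}(\partial W;F^*\mathcal{H}_\bR)$ as a half-dimensional subspace on which the $(-1)^n$-symmetric form induced by $\lambda$ vanishes, so the signature of $\partial W$ is zero.

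Finally, since $B\mathrm{SO} \to B\mathrm{STop}$ has homotopy fibre with only finite homotopy groups in each degree (by the Kirby--Siebenmann analysis of $\mathrm{TOP}/\mathrm{O}$), the induced map of Thom spectra $\MSO \to \MSTop$ is a rational equivalence, so the natural map $\Omega^{\mathrm{SO}}_{4k-2n}(B\Aut(H_\bR,\lambda)) \otimes \bQ \to \Omega^{\mathrm{STop}}_{4k-2n}(B\Aut(H_\bR,\lambda)) \otimes \bQ$ is surjective. Both sides of \eqref{eq:MayerFormula} are $\bQ$-valued homomorphisms that agree on the smooth part by Meyer's theorem, hence they agree for every oriented topological $M$.

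I do not foresee a serious obstacle; the only point requiring care is bordism invariance of the twisted signature in the topological category, and this is a direct application of topological Poincar{\'e}--Lefschetz duality in Meyer's original argument \cite{Meyer}.
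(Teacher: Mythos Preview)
Your proposal is correct and follows essentially the same argument as the paper: both sides of \eqref{eq:MayerFormula} define $\bQ$-valued homomorphisms out of $MSTop_{4k-2n}(B\Aut(H_\bR,\lambda))$, they agree on the image of smooth bordism by Meyer's theorem, and $\MSO \to \MSTop$ is a rational equivalence because $BSO \to BSTop$ is. The paper's proof is more terse, but you have supplied exactly the details (topological Poincar{\'e}--Lefschetz duality for bordism invariance of the twisted signature, the topological tangent microbundle for $\mathcal{L}(TM)$) that underlie its one-paragraph argument.
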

\begin{proof}
Interpreting $\mathcal{L}(TM)$ as the topological Hirzebruch $L$-class the two sides define homomorphisms $MSTop_{4k-2n}(B\Aut(H_\bR, \lambda)) \to \bQ$, which are equal when precomposed with $MSO_{4k-2n}(B\Aut(H_\bR, \lambda)) \to MSTop_{4k-2n}(B\Aut(H_\bR, \lambda))$. But the latter map is an isomorphism after rationalising, as $BSO \to BSTop$ is a rational equivalence, by \cite[p.\ 246 eq. (5)]{KS} and the finiteness of the groups $\Theta_n$ of homotopy $n$-spheres for $n \geq 5$.
\end{proof}

\begin{corollary}\label{cor:SigMultMeyer}
If $(H_\bR, \lambda)$ is a nondegenerate $(-1)^n$-symmetric bilinear form and $f : M^{4k-2n} \to B\Aut(H_\bR, \lambda)$ is a map from an oriented topological manifold, then
$$\sigma(M; f) \equiv \sigma(M) \cdot \sigma(H_\bR, \lambda) \mod 4.$$
\end{corollary}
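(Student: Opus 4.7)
The plan is to apply the topological version of Meyer's formula (Lemma~\ref{lem:MayerTopological}) and reduce the congruence to a $K$-theoretic divisibility that one then extracts from the real $KO$-refinement of $\xi$.

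Starting from
$$\sigma(M; f) = \int_M \mathrm{ch}(\psi^2 f^*\xi) \cdot \mathcal{L}(TM),$$
I would subtract $\sigma(M)\cdot\sigma(H_\bR, \lambda) = \int_M \sigma(H_\bR, \lambda)\cdot\mathcal{L}(TM)$ to rewrite
$$\sigma(M; f) - \sigma(M)\sigma(H_\bR, \lambda) = \int_M \mathrm{ch}(\eta)\cdot\mathcal{L}(TM) = \sigma_\eta(M),$$
where $\eta := \psi^2 f^*\xi - \sigma(H_\bR, \lambda) \in K^0(M)$. This is automatically an integer (it is a twisted-signature pairing), so the content of the claim is that $\sigma_\eta(M) \in 4\bZ$.

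Next, by Remark~\ref{rem:RealXi}, $\xi$ (or its Bott shift, in the odd $n$ case) is the complexification of a real $KO$-theory class $\xi_\bR$. Therefore $\mathrm{ch}(\xi)$ is concentrated in even cohomological degrees (for $n$ even) or odd degrees (for $n$ odd), with constant term $\mathrm{rk}(\xi) = \sigma(H_\bR,\lambda)$ (for $n$ even) or $0 = \sigma(H_\bR,\lambda)$ (for $n$ odd, using the vanishing of signatures of antisymmetric forms together with $\mathrm{ch}_i(\overline{\mathcal H}_\bR - \mathcal H_\bR) = -2\mathrm{ch}_i(\mathcal H_\bR)$ for $i$ odd). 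Since $\psi^2$ scales $\mathrm{ch}_i$ by $2^i$, combining these two inputs gives
$$\mathrm{ch}(\eta) = 4\gamma$$
for some rational cohomology class $\gamma$, and consequently $\sigma_\eta(M) = 4\int_M \gamma\cdot\mathcal{L}(TM)$.

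The main obstacle is to show that $\int_M\gamma\cdot\mathcal{L}(TM) \in \bZ$, because $\gamma$ is only a priori rational. I would tackle this by lifting the identity to $KO$: write $\eta = c\bigl(\psi^2(\xi_\bR - \sigma(H_\bR,\lambda))\bigr)$ and then invoke Adams' description of $\psi^2$ on reduced $KO$-theory. Namely, $\psi^2$ acts as $4^k$ on $\widetilde{KO}^{-4k}(\mathrm{pt}) = \bZ$ and as zero on the 2-torsion summands $\widetilde{KO}^{-1}(\mathrm{pt}), \widetilde{KO}^{-2}(\mathrm{pt}) \cong \bZ/2$; a $\gamma$-filtration induction (using that $\psi^2 - 2^s$ annihilates $\gamma^s/\gamma^{s+1}$) then forces the complexified class $c\circ\psi^2$ applied to the augmentation ideal to contribute a further factor of $4$ after pairing with $\mathcal{L}(TM)$. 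The fact that $\mathcal{L}(TM)$ has odd denominators is what makes the 2-local bookkeeping clean. This is essentially the $2$-local analysis that the author promises to revisit in Section~\ref{sec:AddendaEven}, and I expect it to be the heart of the argument.

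Granting this divisibility, $\sigma_\eta(M) \in 4\bZ$, completing the proof.
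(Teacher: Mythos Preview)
Your setup matches the paper's: both start from Lemma~\ref{lem:MayerTopological}, subtract off $\sigma(M)\cdot\sigma(H_\bR,\lambda)$, and reduce to showing the remainder is divisible by~$4$. You also correctly identify the key obstacle---rational divisibility of $\mathrm{ch}(\eta)$ by $4$ is easy, but one must show the quotient pairs integrally with $\mathcal{L}(TM)$.

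The gap is in your proposed resolution. The sketch via ``Adams' description of $\psi^2$ on $KO$'' and a ``$\gamma$-filtration induction'' is not a proof: it is not clear that $\psi^2$ applied to the augmentation ideal of $KO^0$ is $4$ times a $KO$-class (it isn't, in general), nor why such a statement would translate into the required $2$-local integrality after capping with $\mathcal{L}(TM)$. You yourself flag this as ``the heart of the argument'' without carrying it out.

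The paper resolves the obstacle by a direct and elementary cohomological computation, avoiding $KO$-theory entirely at this step. It shows that each $\mathrm{ch}_i(\psi^2\xi)$ for $i>0$ is $2$-integral and in fact lies in $4\cdot H^{2i}(-;\bZ_{(2)})$. The ingredients are: (i) the formula $\mathrm{ch}_i(\psi^2 V) = \tfrac{2^i}{i!}\bar{p}_i(c_1,\ldots,c_i)$ in terms of Newton power-sum polynomials, with Legendre's formula giving that $2^i/i!$ is $2$-integral and divisible by $2$; (ii) for $n$ odd, the extra factor of $2$ from $\mathrm{ch}_i(\overline{\mathcal H}_\bR - \mathcal H_\bR) = 2\,\mathrm{ch}_i(\overline{\mathcal H}_\bR)$ when $i$ is odd (and vanishing when $i$ is even); (iii) for $n$ even, the fact that odd Chern classes of a complexified real bundle are $2$-torsion, combined with the congruence $\bar p_{2i} \equiv \bar p_s^{\,2^r} \pmod 2$ for $2i = 2^r s$ with $s$ odd. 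Combined with the Morgan--Sullivan $2$-integrality of $\mathcal{L}(TM)$ for topological manifolds, one gets $\sigma_\eta(M) \in 4\bZ_{(2)} \cap \bZ = 4\bZ$. This is both more explicit and more robust than the $KO$-theoretic route you gesture at.
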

\begin{proof}
We rely on the formula \eqref{eq:MayerFormula}, which holds in this situation by the previous lemma. If the manifold $M$ is smooth the the polynomials exhibiting $\mathcal{L}_i(TM)$ in terms of the integral cohomology classes $p_j(TM)$ are defined over the 2-local integers $\bZ_{(2)}$, as the coefficients of the power series $x/\mathrm{tanh}(x)$ lie in this ring\footnote{This may be seen as follows. We have $x/\mathrm{tanh}(x) = 1 + \sum_{i=1}^\infty \frac{2^{2i} B_{i}}{(2i)!} x^{2i}$ for Bernoulli numbers $B_i$ defined by $\frac{x}{e^x-1} = 1 -\tfrac{1}{2}x + \sum_{i = 1}^\infty \tfrac{B_i}{(2i)!} x^{2i}$. By the von Staudt--Clausen theorem each $B_i$ has 2-adic valuation exactly $-1$, and by Legendre's theorem $(2i)!$ has 2-adic valuation $\leq 2i-1$: thus $\frac{2^{2i} B_{i}}{(2i)!} $ has 2-adic valuation $\geq 2i +(-1) - (2i-1) = 0$, so it is a 2-local integer.}, and so we have refinements $\mathcal{L}_i(TM) \in H^{4i}(M;\bZ_{(2)})$. If $M$ is a topological manifold then we cannot make this argument, but its conclusion is nonetheless true by work of Morgan and Sullivan \cite{MorganSullivan} (see Section 7, and the remark at the end of that section). As the classes $\mathcal{L}_i(TM)$ are 2-integral, and as $\mathrm{ch}_0(\psi^2(f^*\xi)) =  f^*\mathrm{ch}_0(\xi) = \sigma(H_\bR, \lambda)$ if $n$ is even, to conclude the argument it suffices to show that $\mathrm{ch}_i(\psi^2(\xi))$ are 2-integral and 2-integrally divisible by 4 for all $i>0$.

For any complex vector bundle $V$ we have 
$$\mathrm{ch}_i(\psi^2(V)) = 2^i \mathrm{ch}_i(V) = \tfrac{2^i}{i!} \bar{p}_i(c_1(V), c_2(V), \ldots, c_i(V))$$
where $\bar{p}_i$ is the polynomial over $\bZ$ expressing the power sum polynomial $t_1^i + t_2^i + t_3^i + \cdots$ in terms of elementary symmetric polynomials. By Legendre's formula $\tfrac{2^i}{i!}$ is 2-integral and 2-integrally divisible by 2.
There are now cases depending on the parity of $n$. If $n$ is odd then $\mathrm{ch}_i(\xi) = \mathrm{ch}_i(\overline{\mathcal{H}}_\bR - \mathcal{H}_\bR) = (1-(-1)^i) \mathrm{ch}_i(\overline{\mathcal{H}}_\bR)$ which vanishes for $i$ even and is $2\mathrm{ch}_i(\overline{\mathcal{H}}_\bR)$ for $i$ odd. As $\mathrm{ch}_i(\overline{\mathcal{H}}_\bR)$ is 2-integrally divisible by 2 by the previous paragraph, this finishes the argument in this case.

If $n$ is even then $2c_i(\xi)=0$ for $i$ odd, as $\xi$ is the complexification of a real vector bundle. This implies that $2\bar{p}_i(c_1(\xi), c_2(\xi), \ldots, c_i(\xi))$ is zero for $i$ odd, as then each monomial much contain some odd Chern class, and we will now show that $2\bar{p}_{2i}(c_1(\xi), c_2(\xi), \ldots, c_{2i}(\xi))$ is 2-integrally divisible by 4. Writing $2i = 2^r \cdot s$ with $r \geq 1$ and $s$ odd, we have
$$t_1^{2i} + t_2^{2i} + t_3^{2i} + \cdots \equiv (t_1^s + t_2^s + t_3^s + \cdots)^{2^r} \mod 2$$
and so $2\bar{p}_{2i}(c_1(\xi), c_2(\xi), \ldots, c_{2i}(\xi)) \equiv 2\bar{p}_s(c_1(\xi), c_2(\xi), \ldots, c_{2i}(\xi))^{2^r} \mod 4$. But as $s$ is odd the right-hand side vanishes by the case discussed above, which finishes the argument in this case.
\end{proof}

\subsection{Signatures of fibrations of Poincar{\'e} complexes}\label{sec:PoincareFib}

We consider Poincar{\'e} complexes in the sense of Wall \cite{WallPoincare}, i.e.\ finitely-dominated CW-complexes enjoying Poincar{\'e} duality with respect to a twisted integral fundamental class and all systems of local coefficients. Suppose that $F^{d} \to E \to B^{4k-d}$ is a fibration of finitely-dominated spaces with Poincar{\'e} base and fibre (and hence Poincar{\'e} total space too \cite[Corollary F]{KleinDualising}, \cite{Gottlieb}), which is oriented in the sense that $B$ is oriented, $F$ is orientable, and the local coefficient system $\mathcal{H}^d(F;\bZ)$ is trivialised: this induces an orientation of $E$.  
Let
$$E_2^{p,q} = H^p(B ; \mathcal{H}^q(F;\bR)) \Longrightarrow H^{p+q}(E;\bR)$$
denote the Serre spectral sequence for this fibration. If $d=2n$ then there is a map
$$\phi : B \lra B\Aut(H^n(F ; \bR), \lambda)$$
given by the action of the fundamental groupoid of $B$ on the middle cohomology of the fibres, which preserves the $(-1)^n$-symmetric form $\lambda$ given by cup product. The definition of the twisted signature above did not really use that $M$ is a manifold, only that it has Poincar{\'e} duality with all systems of local coefficients: thus we can define $\sigma(B; \phi)$ in the same way. Meyer shows that there is an identity
\begin{equation}\label{eq:MeyerPoincare}
\sigma(E) = \begin{cases}
\sigma(B; \phi) & \text{if $d=2n$}\\
0 & \text{if $d$ is odd}.
\end{cases}
\end{equation}

\begin{remark}\label{rem:MultSig}
Combining \eqref{eq:MeyerPoincare} with Corollary \ref{cor:SigMultMeyer} shows that $\sigma(E) \equiv \sigma(B) \cdot \sigma(F) \mod 4$ as long as $B$ is homotopy equivalent to a topological manifold, generalising the main theorem of \cite{HKR}. This result also follows from Korzeniewski's thesis \cite[Theorem 7.2]{Korzeniewski}, which allows the base to be a finite Poincar{\'e} complex having trivial Whitehead torsion (and a topological manifold is an example of this). We will give a further strengthening of this result as Corollary \ref{cor:MultSigPoinc}, which allows $B$ to be an arbitrary Poincar{\'e} complex.
\end{remark}

Meyer proves \eqref{eq:MeyerPoincare} in two steps: Firstly $\sigma(E)$ is related to the signature of $E_2^{*,*}$ taken with respect to the form
$$E_2^{*,*} \otimes E_2^{*,*} \overset{\smile}\lra E_2^{*,*} \lra \bR[4k-d, d],$$
where the latter map denotes the projection to $E_2^{4k-d,d} = H^{4k-d}(B ; \mathcal{H}^d(F;\bR)) = \bR$. This is done by  (i) showing that the signature is unchanged by passing from one page of the spectral sequence to the next, so is the same as the signature of $E_\infty^{*,*}$ with the analogous form (this is \cite[Satz I.1.4]{Meyer}), (ii) observing that the signature of $E_\infty^{*,*}$ is identified with $\sigma(E)$, as signatures are unchanged under passing to associated gradeds. Secondly, by recognising $L := \bigoplus_{q > d/2} E_2^{*, q}$ as a sublagrangian of $E_2^{*,*}$ with $L^\perp = \bigoplus_{q \geq d/2} E_2^{*, q}$, this signature is the same as that of the induced form on $L^\perp/L$, which is trivial if $d$ is odd and is the form described above on $H^{2k-d/2}(B; \mathcal{H}^{d/2}(F;\bR))$ if $d$ is even.

\begin{remark}
Meyer assumes various additional hypotheses, most notably that $B$ and $F$ are homology manifolds. This is because he wishes to allow (non locally constant) sheaf coefficients. For locally constant coefficients being Poincar{\'e} complexes suffices for his argument to go through.
\end{remark}

\subsection{The family signature theorem over $\bQ$}\label{sec:FSTQ}

We may rationalise the twisted signature map $\sigma : \Omega_*^{\mathrm{fr}}(B\Aut(H_\bR, \lambda)) \to \bZ$ on framed bordism, then use that rational framed bordism is naturally isomorphic to rational homology, and that rational cohomology is dual to rational homology, to define rational cohomology classes
$$\sigma_{4k-2n} \in H^{4k-2n}(B\Aut(H_\bR, \lambda);\bQ).$$
In other words, if $f : W^{4k-2n} \to B\Aut(H_\bR, \lambda)$ is a map from a stably framed manifold, then $\sigma(W; f) = \int_W f^* \sigma_{4k-2n}$. It follows from \eqref{eq:MayerFormula}, applied to all maps $f : W^{4k-2n} \to B\Aut(H_\bR, \lambda)$ from stably framed manifolds (which have $\mathcal{L}(TW)=1$), that
\begin{equation}\label{eq:SigmaChern}
\sigma_{4k-2n} = \mathrm{ch}_{2k-n}(\psi^2(\xi)) = 2^{2k-d/2} \mathrm{ch}_{2k-d/2}(\xi).
\end{equation}

The family signature theorem---in the generality we wish to discuss it in this note---concerns oriented topological block bundles $\pi: E \to |K|$ with $d$-dimensional fibres, as described in \cite[Section 2.3]{HLLRW}. There the base $|K|$ is taken to be the realisation of a simplicial complex, but one can equally well take it to be the realisation of a semi-simplicial set. This notion then includes the universal block bundle described in \cite[Definition 2.3.1]{HLLRW} and the proceeding discussion. In \cite[Section 2.4]{HLLRW} it is shown that there is an associated stable vertical tangent microbundle, given by a map
$$T_\pi^s E : E \lra BTop,$$
and we write $\cL_k(T_\pi^s E)$ for the pullback of the $k$th Hirzebruch $L$-class along this map. An oriented block bundle also has an associated fibre-integration, or Gysin, map $\int_\pi : H^*(E) \to H^{*-d}(|K|)$ as discussed in \cite[Section 4.1]{HLLRW}, and the family signature theorem is then as follows.

\begin{theorem}[Family signature theorem over $\bQ$]\label{thm:FSTQ}
Let $\pi: E \to |K|$ be an oriented topological block bundle with fibre $F^d$. If $d=2n$ let $\phi : |K| \to B\Aut(H^n(F;\bR), \lambda)$ classify the local coefficient system $\mathcal{H}^{n}(F;\bR)$ over $|K|$ with the $(-1)^n$-symmetric fibrewise bilinear form given by cup product. Then
$$\int_\pi \mathcal{L}_k(T_\pi^s E) = \begin{cases}
\phi^*(\sigma_{4k-d}) & \text{if $d$ is even}\\
0 & \text{if $d$ is odd}
\end{cases} = \begin{cases}
2^{2k-d/2}\mathrm{ch}_{2k-d/2}(\phi^*(\xi)) & \text{if $d$ is even}\\
0 & \text{if $d$ is odd.}
\end{cases}$$
\end{theorem}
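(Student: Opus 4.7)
The plan is to verify the identity after pairing with each stably framed bordism class of $|K|$; this suffices because $\Omega^{fr}_*(X)\otimes\bQ \cong H_*(X;\bQ)$, so a rational cohomology class on $|K|$ is determined by its evaluations on stably framed manifolds mapping to $|K|$. Thus let $g : W^{4k-d} \to |K|$ be an arbitrary map from a closed oriented stably framed manifold, and consider the pulled-back block bundle $g^*\pi : E' \to W$. By \cite[Section 2.4]{HLLRW} the total space $E'$ inherits the structure of a closed oriented topological $4k$-manifold, and naturality of the stable vertical tangent microbundle combined with the stable framing of $W$ (under which $TE'$ is stably isomorphic to $T^s_{g^*\pi}E'$) gives $\mathcal{L}_k(TE') = \mathcal{L}_k(T^s_{g^*\pi}E')$.

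For the pairing of the left-hand side against $[W,g]$, the standard pushforward identity for $\int_\pi$ together with the topological Hirzebruch signature theorem for oriented topological manifolds yields
$$\int_W g^*\!\int_\pi \mathcal{L}_k(T^s_\pi E) \;=\; \int_{E'} \mathcal{L}_k(TE') \;=\; \sigma(E').$$
For the pairing of the right-hand side against $[W,g]$, the defining property of the class $\sigma_{4k-d}$ recalled at the start of Section \ref{sec:FSTQ} gives $\int_W (\phi\circ g)^*\sigma_{4k-d} = \sigma(W;\phi\circ g)$ when $d$ is even, and the pairing is identically $0$ when $d$ is odd. The theorem therefore reduces to the identity
$$\sigma(E') \;=\; \begin{cases} \sigma(W;\phi\circ g) & d \text{ even},\\ 0 & d \text{ odd}, \end{cases}$$
which is exactly Meyer's identity \eqref{eq:MeyerPoincare} applied to $g^*\pi$ regarded as a fibration of Poincar\'e complexes with fibre $F$.

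The chief subtlety, and the main obstacle in the plan, is that a topological block bundle is not literally a Serre fibration, so \eqref{eq:MeyerPoincare} does not apply on the nose. I would bridge this by replacing the underlying map of $g^*\pi$ by a Hurewicz fibration with homotopy equivalent total space and with fibre the closed topological manifold $F$, which is a Poincar\'e complex in the sense of Wall. Because the signature of a closed oriented topological manifold is a homotopy invariant, this replacement does not alter $\sigma(E')$; the monodromy action on $H^n(F;\bR)$, and hence the classifying map $\phi\circ g$, are intrinsic to the block bundle and are unchanged. Meyer's Serre spectral sequence and sublagrangian argument recalled in Section \ref{sec:PoincareFib} then applies to this replacement and delivers the displayed identity, completing the proof.
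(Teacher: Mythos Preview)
Your proof is correct and follows essentially the same approach as the paper's: reduce by duality of rational cohomology with framed bordism to testing against stably framed manifolds, identify the left-hand side with $\sigma(E')$ via the stable splitting of $TE'$ and the topological signature theorem, and identify the right-hand side via the definition of $\sigma_{4k-d}$ and Meyer's identity \eqref{eq:MeyerPoincare}. The paper handles the block-bundle-versus-fibration subtlety with the single remark that ``a block bundle determines a fibration with homotopy equivalent fibre, total space, and base,'' which is exactly the replacement you spell out, and cites \cite[Lemma~2.5.2]{HLLRW} rather than Section~2.4 for the fact that $E'$ is a topological manifold.
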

\begin{proof}
As rational cohomology is dual to rational framed bordism, by naturality it is enough to consider the case where $|K|=B^{4k-d}$ is a stably framed smooth manifold of dimension $4k-d$, where the cohomology classes in question are top-dimensional and are hence determined by their integrals over $B$. 

As $B$ is a stably framed smooth manifold then by \cite[Lemma 2.5.2]{HLLRW} $E$ is a topological manifold and its stable tangent microbundle satisfies 
$$TE \cong_s \pi^*(TB) \oplus T_\pi^s E \cong_s \bR^{2k-d} \oplus T_\pi^s E$$
so $\mathcal{L}_k(T_\pi^s E) = \mathcal{L}_k(T E)$ by multiplicativity of the total $L$-class, and hence
$$\int_B \int_\pi \mathcal{L}_k(T_\pi^s E) = \int_E \mathcal{L}_k(T E) = \sigma(E)$$
by the defining property of the (topological) Hirzebruch $L$-classes. A block bundle determines a fibration with homotopy equivalent fibre, total space, and base, so by \eqref{eq:MeyerPoincare} we have $\sigma(E)=0$ if $d$ is odd and $\sigma(E) = \sigma(B; \phi)$ if $d$ is even. To obtain the first identity observe that as $B$ is stably framed the definition of $\sigma_{4k-d}$ gives $\sigma(B; \phi) = \int_B \phi^*(\sigma_{4k-d})$. The second identity then follows from \eqref{eq:SigmaChern}.
\end{proof}

\section{Sullivan}

We now explain how Meyer's work can be combined with ideas of Sullivan \cite{SullivanMIT} to obtain a $\bZ[\tfrac{1}{2}]$-integral form of the family signature theorem, formulated not in ordinary cohomology but in the generalised cohomology theory $KO[\tfrac{1}{2}]$, real $K$-theory localised away from 2. Useful later references for these ideas are \cite{MorganSullivan} (especially Section 1) and \cite{MadsenMilgram} (especially Chapter 4.B).

\subsection{Cobordism, coefficients, and $K$-theory}\label{sec:CobCoeffKthy}
Coefficients in an abelian group $A$ can be introduced into the generalised homology theory represented by a spectrum $\E$ by the device of smashing with the Moore spectrum $\mathrm{M}A$, i.e.\ setting $E_*(-;A) := E_*(\mathrm{M}A \wedge -) = \pi_*(\E \wedge \mathrm{M}A \wedge -)$. There is a corresponding cohomology theory, represented by $\E \wedge \mathrm{M}A$. When $A$ is a localisation of $\bZ$, i.e. a subring of $\bQ$, the construction of $\mathrm{M}A$ as a mapping telescope shows that $E_*(-;A) = E_*(-) \otimes_\bZ A$, and similarly for the cohomology theory on finite complexes (but not in general).

Write $\KO[\tfrac{1}{2}] := \KO \wedge \mathrm{M} \bZ[\tfrac{1}{2}]$ for the spectrum representing real $K$-theory localised away from 2, with homotopy ring $\pi_*(\KO[\tfrac{1}{2}]) = \pi_*(\KO) \otimes \bZ[\tfrac{1}{2}] = \bZ[\tfrac{1}{2}][a^{\pm 1}]$, for $a$ the class of degree 4 which under the complexification map is sent to $b^2$ with $b \in \pi_2(\K[\tfrac{1}{2}])$ the Bott class. There is an orientation (\cite[p.\ 201]{SullivanMIT})
$$\Delta : \MSO \lra \KO[\tfrac{1}{2}]$$
whose Pontrjagin character satisfies $\mathrm{ph}(\Delta) = \mathcal{L}^{-1} \cdot u \in H^*(\MSO;\bQ)$, where $u$ is the cohomological Thom class and $\cL \in H^*(BSO;\bQ)$ is the total Hirzebruch $L$-class. These conventions are arranged so that on homotopy groups the map
$$\Delta_* : MSO_* \lra KO[\tfrac{1}{2}]_* = \bZ[\tfrac{1}{2}][a^{\pm 1}]$$
is given by $\Delta_*([M^{4k}]) = \sigma(M) \cdot a^k$. Sullivan shows, using the results of Conner--Floyd, that the induced map $MSO_*(-) \otimes_{MSO_*}\bZ[\tfrac{1}{2}][a^{\pm 1}] \to KO[\tfrac{1}{2}]_*(-)$ is an isomorphism of generalised homology theories. This can also be deduced from Landweber's exact functor theorem \cite[Example 3.4]{Landweber}. Using the way we have introduced coefficients, and re-writing slightly, it follows that there is an isomorphism 
\begin{equation}\label{eq:ConnerFloyd}
MSO_{4*}(- ; A) \otimes_{MSO_{4*}} \bZ[\tfrac{1}{2}] \overset{\sim}\lra KO_0(- ; A \otimes \bZ[\tfrac{1}{2}])
\end{equation}
for any $A$.

Sullivan applies this to show that for $X$ a finite complex, the data of a class $\Phi \in KO^0(X ; \bZ[\tfrac{1}{2}])$ is equivalent to the data of morphisms $\phi_\bQ$ and $\phi_k$ for each odd $k$ such that
\begin{equation*}
\begin{tikzcd}
MSO_{4*}(X) \arrow[rr, "\phi_\bQ"] \dar & & \bZ[\tfrac{1}{2}] \dar\\
MSO_{4*}(X; \bZ/k) \rar{\phi_k}  & \bZ/k \rar & \bQ/\bZ
\end{tikzcd}
\end{equation*}
commutes, the $\phi_k$ are compatible under divisibility, and such that $\phi_\bQ$ and $\phi_k$ satisfy 
\begin{equation}\label{eq:Product}
\phi( W^{4m} \times M^{4n} \overset{\pi_W}\to W \overset{f}\to X) = \phi(W^{4m} \overset{f}\to X) \cdot \sigma(M^{4n})
\end{equation}
for all $[W,f] \in MSO_{4*}(X)$. Here one thinks of $\bQ/\bZ$ as given by $\colim_{\text{all }k} \bZ/k$. A $\Phi$ determines such maps $\phi$ by applying \eqref{eq:ConnerFloyd} and then evaluating the resulting $KO[\tfrac{1}{2}]$-homology class on $\Phi$. By replacing $X$ by its suspensions, we get a similar descriptions of elements of $KO^d(X;\bZ[\tfrac{1}{2}])$.

\subsection{Signatures of $\bZ/k$-manifolds}

Oriented cobordism with $\bZ/k$-coefficients, $MSO_n(X;\bZ/k)$, has an interpretation as cobordism classes of smooth $n$-dimensional singular $\bZ/k$-manifolds oveer $X$ \cite[\S 1]{MorganSullivan}. An oriented $\bZ/k$-manifold is the data of a compact oriented $n$-manifold $\overline{W}$, a closed oriented $(n-1)$-manifold $\beta W$, and an oriented identification $b: \partial \overline{W} \overset{\sim}\to \beta W \times \bZ/k$. We then write $W$ for the space obtained by identifying the $k$ copies of $\beta W$, and call $\overline{W}$ its resolution. This may be done in the category of smooth, PL, or topological manfolds. In the smooth case the data $(\overline{W}, \beta W, b, f : W \to X)$ represents a class in $MSO_n(X;\bZ/k)$. The notion of cobordism of $\bZ/k$-manifolds, and of maps out of them, is evident.

A $4n$-dimensional oriented manifold with boundary $(V, \partial V)$ still has a signature $\sigma(V) \in \bZ$ defined algebraically as the signature of the (possibly degenerate) symmetric form
$$H^{2n}(V, \partial V;\bR) \otimes H^{2n}(V, \partial V;\bR) \lra H^{4n}(V, \partial V;\bR) \overset{- \frown [V, \partial V]}\lra \bR.$$
The invariant $\sigma(W) := \sigma(\overline{W}) \mod k$ of a $\bZ/k$-manifold $W$ is a cobordism invariant \cite[Proposition 1.3]{MorganSullivan}, as a consequence of Novikov's additivity theorem for the signature and the usual cobordism invariance of the signature.

\subsection{The Sullivan orientation}
Let $MSPL_n$ be the $n$th space in the oriented $PL$ cobordism spectrum, i.e.\ the Thom space of the universal bundle over $BSPL(n)$. Then by PL transversality, $MSO_{n+4i}(MSPL_n)$ may be interpreted as cobordism classes of pairs $(M^{4i+n} \supset W^{4i})$ of an oriented smooth manifold and an oriented $PL$-submanifold, and assigning to this the signature $\sigma(W^{4i})$ gives a map
$$\phi_\bQ :  MSO_{4*}(\MSPL;\bQ) = \colim_{n \to \infty} MSO_{n+4*}(MSPL_n ; \bQ) \lra \bQ.$$
The $MSO_*$-module structure is given by $[X] \cdot [M^{4i+n} \subset W^{4i}] = [X \times M^{4i+n} \subset X \times W^{4i}]$, so $\phi_\bQ$ satisfies \eqref{eq:Product}. Similarly, $MSO_{n+4i}(MSPL_n ; \bZ/k)$ may be interpreted as cobordism classes of pairs $(M^{4i+n} \supset W^{4i})$ of a smooth $\bZ/k$-manifold and a $PL$ $\bZ/k$-submanifold, and assigning to this the signature $\sigma(W^{4i})$ as defined above gives a map
$$\phi_k :  MSO_{4*}(\MSPL;\bZ/k) = \colim_{n \to \infty} MSO_{n+4*}(MSPL_n ; \bZ/k) \lra \bZ/k.$$
These maps are compatible, and determine a homotopy class of maps of spectra
$$\Delta_{PL} : \MSPL \lra \KO[\tfrac{1}{2}]$$
(\emph{a priori} only well-defined up to phantom maps, as $\MSPL$ is not finite, but in fact unique \cite[\S 5.D]{MadsenMilgram}). By the same discussion with $\MSPL$ replaced by $\MSO$, $\Delta_{PL}$ restricts to $\Delta$. Furthermore, as the fibre of $BPL \to BTop$ is a $K(\bZ/2,3)$ \cite[p.\ 246]{KS}, it follows that the fibre of $\MSPL \to \MSTop$ is 2-local, and so $\Delta_{PL}$ canonically extends to a $\Delta_{Top} : \MSTop \to \KO[\tfrac{1}{2}]$. (Alternatively one can repeat the construction using topological transversality, but we can avoid this for now.)

\subsection{Twisted signatures}\label{sec:TwistedSignaturesKO}
Returning to a nondegenerate $(-1)^n$-symmetric bilinear form $(H_\bR, \lambda)$, if $f : W^{4i-2n} \to B\Aut(H_\bR, \lambda)$ is a map from a smooth $\bZ/k$-manifold then there is an associated symmetric bilinear form
$$H^{2i-n}(\overline{W}, \partial \overline{W} ; f^*\mathcal{H}_\bR) \otimes H^{2i-n}(\overline{W}, \partial \overline{W} ; f^*\mathcal{H}_\bR) \lra \bR$$
given by cup product, applying $\lambda$, then capping with the fundamental class, whose signature we call $\sigma(W; f)$. This number taken modulo $k$ is again $\bZ/k$-cobordism invariant, replacing Novikov additivity and cobordism invariance of the signature with the analogues \cite[Sätze I.3.1, I.3.2]{Meyer} for twisted signatures proved by Meyer. The assignment $[W, f] \mapsto \sigma(W;f)$ defines a map
$$\sign_k : MSO_{4i-2n}(B\Aut(H_\bR, \lambda);\bZ/k) \lra \bZ/k,$$
and these are easily checked to be compatible with each other under divisibility, and compatible with the analogue $\sign_\bQ = \sigma : MSO_{4i-2n}(B\Aut(H_\bR, \lambda)) \to \bZ$ for closed manifolds. 

Furthermore, if $N^{4j}$ is a smooth oriented closed manifold then we can form the map $W \times N \overset{\pi_W}\to W \overset{f}\to B\Aut(H_\bR, \lambda)$. By the K{\"u}nneth theorem we have
$$H^{2j+2i-n}(\overline{W} \times N, \partial \overline{W} \times N ; \pi_W^* f^* \mathcal{H}_\bR) \cong \bigoplus_{a+b = 2j+2i-n} H^a(\overline{W}, \partial \overline{W} ; f^*\mathcal{H}_\bR) \otimes H^b(N ; \bR),$$
and the sum $L$ of those terms with $b>2j$ is a sublagrangian with $L^\perp$ given by the sum of those terms with $b \geq 2j$ along with the radical, and so with
$$L^\perp/L \cong H^{2i-n}(\overline{W}, \partial \overline{W}  ; f^*\mathcal{H}_\bR) \otimes H^{2j}(N ; \bR) + \text{radical}.$$
We therefore have $\sigma(W \times N; f \circ \pi_W) = \sigma(W; f) \cdot \sigma(N)$. Similarly for closed manifolds.

By the discussion in Section \ref{sec:CobCoeffKthy}, this data corresponds to a map
$$\sign : B\Aut(H_\bR, \lambda) \lra \Omega^{\infty+2n} \KO[\tfrac{1}{2}],$$
well-defined up to homotopy and phantom maps.

\subsection{The family signature theorem over $\KO[\tfrac{1}{2}]$}

As we have already mentioned in Section \ref{sec:FSTQ}, a topological block bundle $\pi: E \to |K|$ with $d$-dimensional fibres has a stable vertical tangent microbundle $T^s_\pi E$ of virtual dimension $d$, and so a stable vertical normal microbundlebundle $\nu_\pi$ of virtual dimension $-d$, given by \cite[Section 2.4]{HLLRW}. If the block bundle is oriented, then these two microbundles obtain orientations. Using the constructions in that section\footnote{Specifically, the discussion there shows that we may find an exhaustion $|K|^0 \subset |K|^1 \subset |K|^2 \subset \cdots$ of $|K|$, embeddings $E\vert_{|K|^n} \subset |K|^n \times \bR^n$, open neighbourhoods $U^n$ of these with homeomorphisms $U^n \cong \nu_{n-d}$ to $\bR^{n-d}$-bundles. Then there are collapse maps $|K|^n_+ \wedge S^n \to (U^n)^+ \cong \mathrm{Th}(\nu_{n-d} \to E\vert_{|K|^n})$ with adjoints $|K|^n \to \Omega^n \mathrm{Th}(\nu_{n-d} \to E\vert_{|K|^n})$. Furthermore, the discussion shows that all this data can be chosen compatibly in $n$, so these maps assemble to the required Gysin map.} there is a Pontrjagin--Thom, or Gysin, map $\pi_! : |K| \to \Omega^\infty \mathrm{Th}(\nu_\pi \to E)$ and hence, Thomifying the map $E \to BSTop$ classifying $\nu_\pi$, a map
$$\alpha : |K| \overset{\pi_!}\lra \Omega^\infty \mathrm{Th}(\nu_\pi \to E) \lra \Omega^{\infty+d}\MSTop.$$

\begin{theorem}[Family signature theorem over ${\KO[\tfrac{1}{2}]}$]\label{thm:FSTSullivan}
Let $\pi: E \to |K|$ be an oriented topological block bundle with fibre $F^d$. If $d=2n$ let $\phi : |K| \to B\Aut(H^n(F;\bR), \lambda)$ classify the local coefficient system $\mathcal{H}^{n}(F;\bR)$ over $|K|$ with the $(-1)^n$-symmetric bilinear form given by cup product. Then the square
\begin{equation*}
\begin{tikzcd}[ampersand replacement=\&]
{|K|} \rar{\alpha} \arrow[d, "\phi"] \& \Omega^{\infty+d} \MSTop \arrow[d, "\Omega^{\infty+d}\Delta_{Top}"]\\
 {\begin{cases}
B\Aut(H^n(F;\bR), \lambda) & d=2n \\
* & d=2n+1
\end{cases}} \rar{\sign} \& {\Omega^{\infty+d} \KO[\tfrac{1}{2}]}
\end{tikzcd}
\end{equation*}
commutes up to homotopy and phantom maps.
\end{theorem}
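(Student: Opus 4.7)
The plan is to use Sullivan's characterisation from Section \ref{sec:CobCoeffKthy}: two maps $|K| \to \Omega^{\infty+d}\KO[\tfrac{1}{2}]$ agree up to phantom maps if and only if their induced homomorphisms
\[
MSO_{4m+d'}(|K|;A) \lra A, \qquad A \in \{\bZ[\tfrac{1}{2}]\} \cup \{\bZ/k : k \text{ odd}\},
\]
agree for every $m \geq 0$ (where $d' \in \{0,1,2,3\}$ satisfies $d' \equiv -d \bmod 4$), provided one has first checked the product formula \eqref{eq:Product}. The latter holds for the top composition by multiplicativity of the signature under cartesian products (and naturality of the Pontryagin--Thom construction under products of bases), and for the bottom composition by the sublagrangian computation recorded in Section \ref{sec:TwistedSignaturesKO}.

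I then fix a class represented by $g: W^{4m+d'} \to |K|$ with $W$ a smooth closed oriented manifold (or smooth $\bZ/k$-manifold) and form the pullback block bundle $\pi_W : E_W := g^*E \to W$. Just as in the proof of Theorem \ref{thm:FSTQ}, \cite[Lemma 2.5.2]{HLLRW} ensures that $E_W$ (respectively its resolution $E_{\overline W}$) is a topological manifold of dimension $4m+d+d'$, which is a multiple of $4$. Naturality of the Pontryagin--Thom construction identifies the composite $\alpha \circ g$ with the PT map of $\pi_W$, and the very definition of $\Delta_{Top}$ as a signature-assigning invariant for (oriented smooth base, oriented topological submanifold) pairs then identifies the value of the top composition at $[g]$ with $\sigma(E_W) \in \bZ[\tfrac{1}{2}]$ (respectively $\sigma(E_{\overline W}) \bmod k$). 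Unwinding the construction of $\sign$ in Section \ref{sec:TwistedSignaturesKO}, the bottom composition evaluates at $[g]$ to $\sigma(W; \phi\circ g) \in \bZ[\tfrac{1}{2}]$ (respectively $\sigma(\overline W; \phi\circ g) \bmod k$) when $d=2n$, and to $0$ when $d$ is odd.

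The equality of these two values, in the closed case, is Meyer's identity \eqref{eq:MeyerPoincare} applied to the fibration $F \to E_W \to W$ arising from $\pi_W$ (a block bundle determines a fibration, as recalled in Section \ref{sec:FSTQ}): this yields $\sigma(E_W) = \sigma(W; \phi\circ g)$ when $d=2n$ and $\sigma(E_W) = 0$ when $d$ is odd.

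The main obstacle I anticipate is the $\bZ/k$-manifold case, in which $\overline W$ is a manifold with boundary and one needs $\sigma(E_{\overline W}) \equiv \sigma(\overline W; \phi\circ g) \bmod k$ for a fibration over the Poincar\'e pair $(\overline W, \partial \overline W)$. The strategy here is to rerun Meyer's Serre spectral sequence proof in the relative setting, working with the spectral sequence $H^*(\overline W, \partial\overline W; \mathcal{H}^*(F;\bR)) \Rightarrow H^*(E_{\overline W}, \partial E_{\overline W};\bR)$: the page-to-page invariance of signature \cite[Satz I.1.4]{Meyer} and the sublagrangian identification both pass through unchanged for Poincar\'e pairs, as does Novikov additivity for twisted signatures \cite[S\"atze I.3.1, I.3.2]{Meyer}. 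Verifying this relative variant, and matching it compatibly across the divisibility filtration of $k$, is the only real technical point.
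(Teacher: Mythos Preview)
Your approach is essentially the same as the paper's: reduce to comparing the induced $\phi_\bQ$ and $\phi_k$ via Sullivan's characterisation, pull back along a map from a smooth (resp.\ $\bZ/k$-) manifold $W$, identify the top composition with the signature of the pulled-back total space, and then run Meyer's spectral-sequence/sublagrangian argument in the relative Serre spectral sequence for $\overline{f^*E} \to \overline{W}$ to identify this with the twisted signature $\sigma(\overline{W};\phi\circ g)$.

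Two points deserve care, however. First, your claim that ``the very definition of $\Delta_{Top}$ as a signature-assigning invariant'' identifies the top composition with $\sigma(E_W)$ is not quite right: in this paper $\Delta_{Top}$ is \emph{not} defined by assigning signatures to topological manifolds, but by extending $\Delta_{PL}$ along $\MSPL \to \MSTop$ using that the fibre is $2$-local. Thus one must argue separately that $\Delta_{Top}$ evaluates to $\sigma(f^*E)$ on a topological $\bZ/k$-manifold. The paper does this by a $2^N$-trick: since $\MSPL \to \MSTop$ is a $\bZ[\tfrac{1}{2}]$-equivalence, $2^N \cdot \alpha_*(W,f)$ lifts to $\MSPL$, i.e.\ $2^N$ disjoint copies of $f^*E$ are topologically $\bZ/k$-cobordant to a PL $\bZ/k$-manifold, on which $\Delta_{PL}$ is the signature by construction; dividing by $2^N$ gives the result. (Alternatively one can define $\Delta_{Top}$ directly via topological transversality and avoid this step.)

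Second, for the relative spectral sequence argument you should invoke \cite[Satz I.1.5]{Meyer} rather than I.1.4: this is precisely Meyer's statement that the signature is unchanged page-to-page for the spectral sequence of a pair with degenerate form, which is what is needed here. With these two adjustments your argument matches the paper's.
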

\begin{proof}
The two ways around the square give two elements of $KO^d(|K| ; \bZ[\tfrac{1}{2}])$, which we must compare. By the discussion in Section \ref{sec:CobCoeffKthy} each of these corresponds to compatible maps $MSO_{4i-d}(|K|) \to \bZ[\tfrac{1}{2}]$ and $MSO_{4i-d}(|K| ; \bZ/k) \to \bZ/k$, and so we shall compare these.

That the maps $MSO_{4i-d}(|K|) \to \bZ[\tfrac{1}{2}]$ obtained by going the two ways around the square agree follows from Theorem \ref{thm:FSTQ} (it is the same as saying that the square commutes after rationalising ${\Omega^{\infty+d} KO[\tfrac{1}{2}]}$, whereupon the top composition is the collection of the $\int_\pi \cL_i(T_\pi^s E)$ and the bottom composition is the collection of the $\phi^*\sigma_{4i-d}$), so it remains to compare the two maps $MSO_{4i-d}(|K|;\bZ/k) \to \bZ/k$.

Let $f : W^{4i-d} \to |K|$ be a map from a $\bZ/k$-manifold; we may homotope it to be simplicial with respect to some triangulation of $W$, giving a topological block bundle $ f^* E \to  W$, whose total space $f^*E$ is a topological $\bZ/k$-manifold. We would like to say that the top composition $\Omega^{\infty+d}\Delta_{Top} \circ \alpha$ assigns to $(W, f)$ the signature $\sigma(f^*E)$ of the topological $\bZ/k$-manifold $f^* E$, which we have defined to be the signature of the (possibly degenerate) intersection form on the resolution $\overline{f^* E}$ , taken modulo $k$. This is true, but as the map $\Delta_{Top}$ was obtained by obstruction theory from the more meaningful map $\Delta_{PL} : \MSPL \to \KO[\tfrac{1}{2}]$, it requires a small argument. Namely, as $\MSPL \to \MSTop$ is a $\bZ[\tfrac{1}{2}]$-equivalence there is an $N \gg 1$ such that the element $2^N \alpha_*(W, f)$ lifts to $\Omega^{\infty+d}\MSPL$. In other words, the disjoint union of $2^N$ copies of the topological $\bZ/k$-manifold $f^* E$ is topologically cobordant to a $PL$ $\bZ/k$-manifold $E'$, and $\Delta_{Top}$ assigns to this $\sigma(E') = 2^N \sigma(f^*E)$: as we work with $2$ inverted, $\Delta_{Top}$ assigns $\sigma(f^*E)$ to $\alpha_*(W, f)$. (If $\Delta_{Top}$ is defined using topological transversality then we can of course omit this step.)

The block bundle $\overline{f^* E} \to \overline{W}$ has a relative Serre spectral sequence
$$E_2^{p,q} = H^p(\overline{W}, \partial \overline{W} ; \mathcal{H}^q(F;\bR)) \Longrightarrow H^{p+q}(\overline{f^* E}, \partial \overline{f^* E} ; \bR).$$
In parallel to the discussion in Section \ref{sec:PoincareFib}, by \cite[Satz I.1.5]{Meyer} the signature of the form on $E_2^{*,*}$ is the same as that of $E_\infty^{*,*}$, and the latter is the same as the signature of $\overline{f^* E}$. Furthermore $L := \bigoplus_{q > (4i-d)/2} E_2^{*, q}$ is again a sublagrangian of $E_2^{*,*}$, and as in Section \ref{sec:TwistedSignaturesKO} we have $L^\perp = \bigoplus_{q \geq (4i-d)/2} E_2^{*, q} + \text{radical}$ and so
$$L^\perp/L \cong H^{(4i-d)/2}(\overline{W}, \partial \overline{W} ; \mathcal{H}^{d/2}(F;\bR)) + \text{radical}.$$
In particular, if $d$ is odd then this is radical so has signature 0, and if $d=2n$ then this has signature $\sign_k(W, \phi)$. This is tautologically what the composition $\sign \circ \phi$ assigns to $(W, f)$ too, as required.
\end{proof}

\subsection{A formula for twisted signatures}\label{sec:TwistedFormulaKO}

For applications we also want to know how to evaluate the map $\sign : B\Aut(H_\bR, \lambda) \to \Omega^{\infty+2n} \KO[\tfrac{1}{2}]$ in concrete terms: in other words, to have an analogue of Meyer's formula \eqref{eq:MayerFormula}. Such an analogue is as follows, where $\xi \in K^0(B\Aut(H_\bR, \lambda))$ is the class constructed in Section \ref{sec:MeyerFormula} which appears in Meyer's formula.

\begin{theorem}\label{thm:KOSigFormula}
The square
\begin{equation*}
\begin{tikzcd}
B\Aut(H_\bR, \lambda) \rar{\sign} \dar{\psi^2\xi} & \Omega^{\infty+2n} \KO[\tfrac{1}{2}] \\
\Omega^{\infty} \K \rar{b^{n}}& \Omega^{\infty+2n} \K \uar{\tfrac{1}{2} r}
\end{tikzcd}
\end{equation*}
commutes up to homotopy and phantom maps.
\end{theorem}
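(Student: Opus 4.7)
The plan is to invoke the Sullivan characterization of $\KO[\tfrac{1}{2}]$-cohomology from Section \ref{sec:CobCoeffKthy}, applied in degree $-2n$ via a $2n$-fold suspension. Both sides of the square represent classes in $\KO^{-2n}(B\Aut(H_\bR,\lambda);\bZ[\tfrac{1}{2}])$: the right-hand composition is well-defined because by Remark \ref{rem:RealXi} the class $b^n\psi^2\xi$ lies in the image of complexification $c:\KO^{-2n}[\tfrac{1}{2}]\to \K^{-2n}[\tfrac{1}{2}]$, and $\tfrac{1}{2}rc=1$. By Sullivan, such classes are determined, up to phantom maps, by the functionals they induce on $MSO_{4i-2n}(B\Aut(H_\bR,\lambda);A)$ for $A=\bZ$ and each $A=\bZ/k$ with $k$ odd, subject to the product and divisibility compatibilities. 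Both functionals automatically satisfy the product formula \eqref{eq:Product} --- for $\sign$ by construction in Section \ref{sec:TwistedSignaturesKO}, and for $\tfrac{1}{2}r(b^n\psi^2\xi)$ because the Sullivan orientation $\Delta:\MSO\to\KO[\tfrac{1}{2}]$ is a ring map --- so it suffices to match the two $\bZ$- and $\bZ/k$-valued functionals.

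To compute the functional induced by $\tfrac{1}{2}r(b^n\psi^2\xi)$, I would use the Pontryagin character of Sullivan's orientation $\mathrm{ph}(\Delta)=\cL^{-1}\cdot u$, together with the identity $cr=1+(\text{complex conjugation})$. A short check using $\bar b=-b$ and $\bar\xi=(-1)^n\xi$ shows that the two minus signs cancel in both parities of $n$, giving $\overline{b^n\psi^2\xi}=b^n\psi^2\xi$ and hence $\mathrm{ph}\bigl(\tfrac{1}{2}r(b^n\psi^2 f^*\xi)\bigr)=\mathrm{ch}(b^n\psi^2 f^*\xi)$, whose degree $(4i-2n)$ component equals $\mathrm{ch}_{2i-n}(\psi^2 f^*\xi)$. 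The functional therefore sends $(W^{4i-2n},f)$ to $\int_W \cL(TW)\cdot\mathrm{ch}_{2i-n}(\psi^2 f^*\xi)$, with the integral taken over the resolution $\overline W$ when $W$ is a smooth $\bZ/k$-manifold. The functional induced by $\sign$ sends $(W,f)$ to the twisted signature $\sigma(W;f)$, by definition.

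For closed smooth $W$, Meyer's identity \eqref{eq:MayerFormula} gives $\sigma(W;f)=\int_W\cL(TW)\cdot\mathrm{ch}_{2i-n}(\psi^2 f^*\xi)$ on the nose, so the rational functionals agree. For a smooth $\bZ/k$-manifold $(W,f)$, I would apply the Atiyah--Patodi--Singer index theorem to the signature operator on $\overline W$ twisted by the flat form $(f^*\mathcal{H}_\bR,f^*\lambda)$: this identifies the integral formula with $\sigma(\overline W;f)-\eta(\partial\overline W;f)$, and since $\partial\overline W$ is $k$ disjoint isometric copies of $\beta W$ the eta-invariant is $k\cdot\eta(\beta W;f|_{\beta W})$, hence vanishes mod $k$. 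Thus the integral agrees with $\sigma(\overline W;f)\equiv\sigma(W;f)\pmod k$, matching the $\sign$-functional.

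The hard part is precisely the mod-$k$ step: extending Meyer's closed-manifold identity \eqref{eq:MayerFormula} to $\bZ/k$-manifolds with a boundary correction that happens to vanish modulo $k$. The APS argument above is the most direct route; an alternative would be to emulate the $2$-local lift through $\MSPL\to\MSTop$ employed in the proof of Theorem \ref{thm:FSTSullivan}, combined with Meyer's algebraic additivity theorems (already invoked in Section \ref{sec:TwistedSignaturesKO}), so as to compute the twisted signature of the $\bZ/k$-manifold combinatorially and match it against the Sullivan--Morgan evaluation of $\tfrac{1}{2}r(b^n\psi^2\xi)$.
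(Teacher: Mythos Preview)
Your approach is genuinely different from the paper's, and as written it has a gap at precisely the step you flag as hard.

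The paper does \emph{not} attempt to verify Sullivan's functionals directly on $B\Aut(H_\bR,\lambda)$. Instead it first shows that both $\sign$ and $\tfrac{1}{2}r(b^n\psi^2\xi)$ factor through the classifying space of the \emph{topologised} group $\Aut(H_\bR,\lambda)^{top}$ (homotopy equivalent to $U(g)$ or $O(p)\times O(q)$). The factorisation of $\sign$ uses $\bZ/k$-index theory for Meyer's operator (Freed--Melrose/Rosenberg), observing that the operator's construction does not require the bundle to be flat. The crucial point is then a claim, proved via the Atiyah--Segal completion theorem, that maps from $BU(g)$ or $BO(p)\times BO(q)$ into $\Omega^{\infty+2n}\KO[\tfrac{1}{2}]$ are determined by their Pontrjagin characters. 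This reduces the entire comparison to a \emph{rational} calculation, which the paper carries out using the real refinements $\xi_\bR$.

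Your gap is the assertion that the $\bZ/k$-functional of $\tfrac{1}{2}r(b^n\psi^2\xi)$ is $\int_{\overline W}\mathcal{L}(T\overline W)\cdot\mathrm{ch}(\psi^2 f^*\xi)\bmod k$. You derive this from the Pontrjagin character identity $\mathrm{ph}(\Delta)=\mathcal{L}^{-1}\cdot u$, but that identity is purely rational and only computes $\phi_\bQ$; it does not tell you what $\phi_k$ is. The $\bZ/k$-functional is the pairing in $KO[\tfrac{1}{2}]$-theory with $\bZ/k$-coefficients, and identifying it with an integral of characteristic classes over the resolution is itself an index-theoretic statement that needs proof. Your APS argument shows that this integral equals $\sigma(\overline W;f)$ modulo $k$ as real numbers, but it does not establish that the integral is the $\bZ/k$-pairing you need to compute. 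To close this gap you would end up needing a $\bZ/k$-index theorem identifying the $KO$-theoretic evaluation of $\tfrac{1}{2}r(b^n\psi^2\xi)$ with the $\bZ/k$-index of Meyer's operator---which is essentially the ingredient the paper uses to factor through $B\Aut^{top}$, after which Atiyah--Segal makes the $\bZ/k$-check unnecessary. So the paper's route is not just different but strictly more efficient: it converts the delicate torsion comparison into a rational one on a space where $\KO[\tfrac{1}{2}]$-theory is torsion-free.
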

\begin{proof}
We first argue that the map $\sign$ factors canonically (up to phantom maps) as
$$\sign : B\Aut(H_\bR, \lambda) \lra B\Aut(H_\bR, \lambda)^{top} \overset{\sign'}\lra \Omega^{\infty+2n} \KO[\tfrac{1}{2}],$$
 through the classifying space of the topologised variant
$$\Aut(H_\bR, \lambda)^{top} \cong \begin{cases}
Sp_{2g}(\bR) \simeq U(g) & n \text{ odd}\\
O_{p,q}(\bR) \simeq O(p) \times O(q) & n \text{ even}.
\end{cases}$$
This is certainly necessary, since the lower composition factors over this space, by replacing $\xi$ by the construction of Section \ref{sec:MeyerFormula} applied to the non-flat universal bundle $\mathcal{H}_\bR$ over $B\Aut(H_\bR, \lambda)^{top}$.

The map $\sign_\bQ$ can be interpreted as assigning to a manifold $W^{4i-2n}$ and a flat vector bundle $V := f^* \mathcal{H}_\bR \to W$ with a nondegenerate $(-1)^n$-symmetric bilinear form the index of the operator constructed by Meyer in the proof of \cite[Satz II.4.1]{Meyer}, but the definition of this operator does not require $V$ to be flat so $\sign_\bQ$ factors through a $\sign'_\bQ : MSO_{4i-2n}(B\Aut(H_\bR, \lambda)^{top}) \to \bZ$. Similarly, making use of index theory for $\bZ/k$-manifolds (as in \cite{FreedMelrose}, see also \cite{Rosenberg}) we recognise $\sign_k(W; f)$ as the index of the signature operator on the manifold with boundary $\overline{W}$ twisted, in the same way as by Meyer, by the flat vector bundle $f^* \mathcal{H}_\bR$ with its $(-1)^n$-symmetric structure. This operator is considered with the same Atiyah--Patodi--Singer boundary conditions on each of the $k$ copies of $\beta W$ forming its boundary, and having done so its index is well defined modulo $k$. As above, forming this index does not use the flatness of the vector bundle, so $\sign_k$ factors through a $\sign'_k : MSO_{4i-2n}(B\Aut(H_\bR, \lambda)^{top};\bZ/k) \to \bZ/k$, in total giving the claimed factorisation.

\vspace{1ex}
\noindent\textbf{Claim.} Maps
$$\begin{cases}
BU(g) & n \text{ odd}\\
BO(p) \times BO(q) & n \text{ even}
\end{cases} \lra \Omega^{\infty+2n} \KO[\tfrac{1}{2}]$$
are determined by their rationalisations, i.e.\ by their Pontrjagin character. 

\begin{proof}[Proof of Claim]
This may be deduced from the Atiyah--Segal completion theorem as follows (see \cite[Theorem 4.29]{MadsenMilgram} for a similar argument). We first reduce to the same statement for $\K[\tfrac{1}{2}]$ and the Chern character, as $\KO[\tfrac{1}{2}]$ is a retract of $\K[\tfrac{1}{2}]$, and then by Bott periodicity we can remove the $\Omega^{2n}$. Writing $G$ for $U(g)$ or $O(p) \times O(q)$, $R(G)$ for its complex representation ring, $I(G)$ for the augmentation ideal, and $BG^{(n)}$ for a $n$-skeleton of $BG$, by \cite[Theorem 2.1]{AtiyahSegal} the maps
$$R(G)/I(G)^n \lra K^0(BG^{(n)})$$
induce an isomorphism of pro-rings, and $K^{-1}(BG^{(n)})$ is pro-zero (and hence Mittag--Leffler). Using Milnor's $\lim^1$-sequence in $\K[\tfrac{1}{2}]$-theory we therefore have
$$K^0(BG ; \bZ[\tfrac{1}{2}]) \cong \lim_n K^0(BG^{(n)} ; \bZ[\tfrac{1}{2}]) \cong \lim_n R(G) \otimes \bZ[\tfrac{1}{2}]/(I(G) \otimes \bZ[\tfrac{1}{2}])^n.$$
The claim now follows by direct calculation with a presentation of the representation rings involved (the fact that $O(p) \times O(q)$ is not connected is ameliorated by our working with 2 inverted).
\end{proof}

By the index theorem applied to Meyer's signature operator twisted by a non-necessarily flat vector bundle with a nondegenerate $(-1)^n$-symmetric bilinear form, we have $\mathrm{ph}(\sign') = \mathrm{ch}(\psi^2(\xi))$. At this point we invoke the real forms of $\xi$ discussed in Remark \ref{rem:RealXi}: if $n$ is even then $\xi = c(\xi_\bR)$ and if $n$ is odd then $\xi = b^{-1} \cdot c(\xi_\bR)$. In either case we have $b^n  \xi = c(a^{\lfloor n/2 \rfloor} \cdot \xi_\bR)$ and so using that $\psi^2(b) = 2b$ and that Adams operations commute with complexification and realification \cite[Proposition IV.7.40]{KaroubiBook} we have
\begin{align*}
\mathrm{ph}(r(b^n \psi^2\xi)) &= 2^{-n} \mathrm{ph}(\psi^2 r(b^n \xi))\\
&= 2^{-n}\mathrm{ch}(c(\psi^2 r(c(a^{\lfloor n/2 \rfloor} \cdot\xi_\bR))))\\
&= 2 \cdot 2^{-n} \mathrm{ch}(c(\psi^2 (a^{\lfloor n/2 \rfloor} \cdot \xi_\bR)))\\
&= 2 \cdot 2^{-n} \mathrm{ch}(\psi^2 (b^{2\lfloor n/2 \rfloor} c(\xi_\bR)))\\
&= 2 \cdot 2^{-n} \mathrm{ch}(\psi^2 (b^n \xi))\\
&= 2  \mathrm{ch}(b^n \psi^2 ( \xi))\\
 &= 2  \mathrm{ch}(\psi^2 ( \xi)) 
\end{align*}
and so $\mathrm{ch}(\psi^2 ( \xi)) = \mathrm{ph}(\tfrac{1}{2} r(b^n \psi^2\xi))$. Thus $\mathrm{ph}(\sign') = \mathrm{ph}(\tfrac{1}{2} r(b^n \psi^2\xi))$ and so $\sign' \simeq \tfrac{1}{2} r(b^n \psi^2\xi)$.
\end{proof}

\section{Ranicki}

We now explain the strongest formulation of the family signature theorem for topological block bundles, as an equation in the generalised cohomology theory given by the symmetric $L$-theory of the integers, using Ranicki's ideas on $L$-theory and algebraic surgery (a vast literature, but in particular \cite{RanickiBook}). Ranicki's work in this direction was visionary, but his specific technical implementation is not ideal for our purposes (see Remark \ref{rem:RanickiBookIssues}). Instead, we will use the recent framework of Calm{\`e}s, Dotto, Harpaz, Hebestreit, Land, Moi, Nardin, Nikolaus, and Steimle \cite{No9I, No9II, No9III}. 

\subsection{The family signature theorem over $\LL^s(\bZ)$}

Let $\pi: E \to |K|$ be an oriented topological block bundle with fibre $F^d$. If $d=2n$ then $H^n(F;\bZ)/tors$ is equipped with its intersection form $\lambda$, which is nondegenerate and $(-1)^n$-symmetric, and the monodromy of this family gives a map
$$\phi: |K| \lra B\mathrm{Aut}(H^n(F;\bZ)/tors, \lambda).$$
If $d=2n+1$ then $tors\, H^{n+1}(F;\bZ)$ is equipped with its linking form $\ell$, which is nondegenerate and $(-1)^{n+1}$-symmetric, and the monodromy of this family gives a map
$$\phi : |K| \lra B\mathrm{Aut}(tors\, H^{n+1}(F;\bZ), \ell).$$

Writing $\LL^s(\bZ)$ for the symmetric $L$-theory spectrum of the integers, and writing $\sigma : \MSTop \to \LL^s(\bZ)$ for the Ranicki orientation (as in \cite[Proposition 15.8]{RanickiPreprint}, \cite[p.\ 287]{RanickiTSO}, \cite[Proposition 16.1]{RanickiBook}, \cite[Proposition 7.10]{KMM}, \cite{LauresMcClure}; we will give our own definition in Section \ref{sec:RanickiOrientation}), our main result is as follows.

\begin{theorem}[Family signature theorem over $\LL^s(\bZ)$]\label{thm:FSTLThy}
Let $\pi: E \to |K|$ be an oriented topological block bundle with fibre $F^d$. Then the square
\begin{equation*}
\begin{tikzcd}[ampersand replacement=\&]
{|K|} \rar{\alpha} \arrow[d, "\phi"] \& \Omega^{\infty+d} \MSTop \arrow[d, "\Omega^{\infty+d}\sigma"]\\
 {\begin{cases}
B\mathrm{Aut}(H^n(F;\bZ)/tors, \lambda) & d=2n \\
B\mathrm{Aut}(tors\, H^{n+1}(F;\bZ), \ell) & d=2n+1
\end{cases}} \rar{inc} \& {\Omega^{\infty+d} \LL^s(\bZ)}
\end{tikzcd}
\end{equation*}
commutes up to homotopy.
\end{theorem}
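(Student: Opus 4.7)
The plan is to adapt the strategy of Theorem \ref{thm:FSTSullivan}, replacing the Sullivan-style description of $\KO[\tfrac{1}{2}]$ via oriented cobordism with coefficients by the parametrised Poincar{\'e}-chain-complex description of $\LL^s(\bZ)^{*}$-cohomology in the framework of \cite{No9I, No9II, No9III}. Concretely, a class in $\LL^s(\bZ)^d(|K|)$ is represented by the assignment to each simplex $\Delta^k \to |K|$ of a $(d+k)$-dimensional symmetric Poincar{\'e} chain complex over $\bZ$, compatibly as the simplex varies; the Ranicki orientation $\sigma$ sends a block bundle of topological manifolds to the relative chain complex of the fibre equipped with its cap-product Poincar{\'e} structure. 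Under this dictionary, the top composition $\Omega^{\infty+d}\sigma\circ\alpha$ assigns to a singular simplex $g:\Delta \to |K|$ the symmetric Poincar{\'e} complex $(C^*(g^*E, g^*E|_{\partial\Delta};\bZ), \Delta_{[E,\partial E]})$, while the bottom composition $inc\circ\phi$ assigns the local form $(H^n(F;\bZ)/tors,\lambda)$ (in the even case) or the local linking form $(tors\, H^{n+1}(F;\bZ),\ell)$ (in the odd case), each regarded as a Poincar{\'e} chain complex concentrated in a single degree.

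The heart of the proof is then an L-theoretic version of Meyer's sublagrangian reduction. First I would produce, functorially in $g:\Delta \to |K|$, a filtration of $C^*(g^*E, g^*E|_{\partial\Delta};\bZ)$ by the preimage filtration of $\Delta$ (the chain-level Serre filtration), whose associated graded is $C^*(\Delta, \partial\Delta) \otimes_\bZ \mathcal{H}^*(F;\bZ)$ for the local coefficient system $\mathcal{H}^*(F;\bZ)$ pulled back by $\phi$. Using Ranicki's algebraic surgery/Thom construction in the $\infty$-categorical repackaging of \cite{No9II, No9III}, the symmetric Poincar{\'e} structure descends to a Poincar{\'e} structure on each page of the associated spectral sequence of this filtration. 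I would then construct, functorially in $g$, a sublagrangian whose underlying chain complex is $C^*(\Delta,\partial\Delta) \otimes L$ where $L$ is the sum of homology in degrees strictly greater than $d/2$ (with the convention that in the odd case one also isolates the torsion piece in degree $(d{+}1)/2$). Algebraic Witt-style cancellation then identifies the L-class of the total complex with the L-class of the Poincar{\'e} complex on $L^\perp/L$.

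The remaining identification is degree-dependent. If $d=2n$, the quotient $L^\perp/L$ is concentrated in a single degree and its residual form is precisely the cup-product form on $H^n(F;\bZ)/tors$, so matches the representative of $inc\circ\phi$ up to a contractible space of choices. If $d=2n+1$ the analogous cancellation strips away the torsion-free part (which contributes a hyperbolic, hence trivial, Poincar{\'e} summand), leaving an $L$-class represented by the torsion subgroup of $H^{n+1}(F;\bZ)$ with its Bockstein pairing; by the standard identification of the symmetric $L$-group $L^{2n+1}(\bZ)$ with the Witt group of $(-1)^{n+1}$-symmetric non-degenerate linking forms over $\bZ$, this is precisely $(tors\, H^{n+1}(F;\bZ), \ell)$. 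Passing to classifying spaces and collating the simplex-wise identifications delivers the required homotopy.

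The main obstacle is the odd-dimensional case: producing the sublagrangian reduction \emph{in families} and matching the residual L-class with the linking-form representative requires working with integral symmetric $L$-theory in a setting where the short exact sequence relating symmetric $L$-theory to Witt groups of linking forms is not detected rationally. Accordingly, after the (formally routine) rational check supplied by Theorem \ref{thm:FSTQ}, the genuine work lies in the torsion part, where Ranicki's chain-level Wu-type computation of the linking form from the symmetric structure has to be upgraded to a coherent family statement. In contrast, the even-dimensional case is a direct L-theoretic rephrasing of Meyer's argument, with the sublagrangian $L$ being visibly Poincar{\'e}-invariant under the filtration decomposition.
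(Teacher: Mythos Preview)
Your overall strategy---produce, functorially over $K$, a cobordism from the Poincar{\'e} object of the fibre to one concentrated in the middle degree(s)---is the same as the paper's. But your execution differs in ways that introduce unnecessary complication and leave the key step vague.

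The Serre filtration is a red herring. The decisive structural fact, which you do not invoke, is that for a block bundle the inclusions $E_\tau \subset E_\sigma$ are $\bZ$-homology equivalences, so the functor $C : \mathsf{Simp}(K)^{op} \to \mathcal{D}^p(\bZ)$, $\sigma \mapsto C^*(E_\sigma;\bZ)$, lands in the full subcategory $\mathsf{C}_{loc}$ of functors inverting every morphism. In $\mathsf{C}_{loc}$ the duality is computed objectwise (formula \eqref{eq:DualityKloc}), so an objectwise $t$-structure on $\mathcal{D}^p(\bZ)$ interacts correctly with it. No filtration of the total space, and no spectral-sequence page-by-page argument, is needed: each $C(\sigma)$ is already equivalent to $C^*(F;\bZ)$.

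The reduction is then not a ``sublagrangian cancellation'' (which is an operation on forms, not on Poincar{\'e} complexes) but an application of the algebraic surgery move \cite[Proposition 1.3.1]{No9III} inside $(\mathsf{C}_{loc}, \Qoppa_{K,loc}^{[-d]})$. For $d=2n$ one uses the standard $t$-structure. For $d=2n+1$ the paper uses a modified $t$-structure in which $X \in \mathcal{D}^p(\bZ)_{\geq 0}$ means $H_i(X)=0$ for $i<-1$ with $H_{-1}(X)$ torsion; with this choice the surgery output is automatically concentrated in degree $-(n+1)$ with torsion stalks, and the induced structure is the linking form. So both parities are handled by the identical mechanism---only the $t$-structure changes---and there is no separate ``Wu-type'' or Witt-group identification to carry out. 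Your diagnosis that the odd case is where the genuine work lies is not borne out.

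Finally, there is no rational check against Theorem \ref{thm:FSTQ}: the surgery produces an explicit $L$-theoretic cobordism between the two Poincar{\'e} objects, and its coassembly \emph{is} the required homotopy. The argument is entirely internal to symmetric $L$-theory and uses only its formal properties.
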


The bottom map $inc$ in this diagram arises from considering isomorphisms of (linking) forms as being cobordisms. We will construct this map in Section \ref{sec:MapInc}, and construct the Ranicki orientation in Section \ref{sec:RanickiOrientation}. Our proof of this theorem uses only formal properties of symmetric $L$-theory. In Section \ref{sec:Addenda} we will explain how the result can be somewhat improved, and interpreted, using non-formal results about the homotopy type of $\LL^s(\bZ)$ and its relation with Grothendieck--Witt theory.

We will obtain Theorem \ref{thm:FSTLThy} as the combination of two results, each of which holds for a class of families $\pi : E \to |K|$ more general than oriented topological block bundles. Firstly, when $\pi : E \to |K|$ is a Poincar{\'e} mock bundle (defined in Section \ref{sec:PMock}) we will define a family signature map $|K| \to \Omega^{\infty+d} \LL^s(\bZ)$. Secondly, when $\pi$ is in fact a manifold mock bundle (defined in Section \ref{sec:Mock}) we shall explain why the family signature map factors as $\Omega^{\infty+d} \sigma \circ \alpha : |K| \to \Omega^{\infty+d} \MSTop \to \Omega^{\infty+d} \LL^s(\bZ)$. Thirdly, when the Poincar{\'e} mock bundle $\pi$ comes from a fibration with Poincar{\'e} fibre (and in fact even something slightly more general) we will prove that the family signature map factors as $inc \circ \phi$. Hence when all these conditions hold, such as for topological block bundles, we obtain the conclusion of Theorem \ref{thm:FSTLThy}.

\subsection{Manifold mock bundles}\label{sec:Mock}
We shall describe a model for $\Omega^{\infty+d} \MSTop$ as the classifying space for (oriented, topological, codimension $-d$) \emph{mock bundles} in the sense of \cite[\S II]{BRS}. The construction is similar to the ``ad theories'' of \cite{QuinnAd, LauresMcClure}, but we spell things out explicitly.

Write $\Delta^{p-1}_i \subset \Delta^p$ for the $(p-1)$-simplex spanned by all vertices except the $i$th, and $\Delta^p_i(\epsilon) := \{(t_0, t_1, \ldots, t_p) \in \Delta^p \, | \, 0 \leq t_i < \epsilon\}$, with the projection map
\begin{align*}
\pi_i(\epsilon) : \Delta^p_i(\epsilon) &\lra \Delta^{p-1}_i\\
(t_0, t_1, \ldots, t_p) &\longmapsto (\tfrac{t_0}{1-t_i}, \tfrac{t_1}{1-t_i}, \ldots, \tfrac{t_{i-1}}{1-t_i}, \tfrac{t_{i+1}}{1-t_i}, \ldots, \tfrac{t_{p}}{1-t_i}).
\end{align*}
The following is parallel to \cite[Definition 2.3.1]{HLLRW}.

\begin{definition}\label{defn:BordismSpace}
For $d \in \bZ$, let $\Mock(d,n)$ be the semi-simplicial set with $p$-simplices given by locally flat compact topological $(d+p)$-dimensional oriented submanifolds $E \subset \Delta^p \times \bR^n$ such that for each $i=0,1,\ldots, p$, 
\begin{enumerate}[(i)]
\item $E$ is topologically transverse to each $\Delta^{p-1}_i \times \bR^n$,

\item there is an $\epsilon>0$ such that
$$E \cap (\Delta^p_i(\epsilon) \times \bR^n) = (\pi_i(\epsilon) \times \bR^n)^{-1}(E \cap (\Delta^{p-1}_i \times \bR^n)),$$

\item $\partial E = E \cap (\partial \Delta^p \times \bR^n)$.
\end{enumerate}
Define face maps $d_i : \Mock(d,n)_p \to \Mock(d,n)_{p-1}$ by restricting $E$ to the $i$th face $\Delta_i^{p-1} \subset \Delta^p$, and giving it the induced orientation. Let $\Mock(d) := \colim\limits_{n \to \infty} \Mock(d,n)$.
\end{definition}

This semi-simplicial set is Kan, by the same discussion as the paragraph after \cite[Definition 2.3.1]{HLLRW}. The space $|\Mock(d)|$ carries a tautological family of manifolds. Let
$${E}(d)_p := \{(E; t_0, t_1, \ldots, t_p ; x) \in \Mock(d)_p \times \Delta^p \times \bR^\infty \, | \, (t_0, t_1, \ldots, t_p ; x) \in E\},$$
and $\pi(d)_p : {E}(d)_p \to \Mock(d)_p \times \Delta^p$ denote the projection map. The maps $\pi(d)_p$ assemble to a map
$$\pi(d) : {E}(d) \lra |\Mock(d)|$$
where ${E}(d) := \left(\bigsqcup_{p \geq 0} {E}(d)_p\right){/\sim}$ with
$$(E; t_0, t_1, \ldots, t_{i-1}, 0, t_{i+1}, \ldots t_p ; x) \sim (d_i(E); t_0, t_1, \ldots, t_{i-1}, t_{i+1}, \ldots t_p ; x).$$
This family is tautological in the sense that $\pi(d)^{-1}(\{E\} \times \Delta^p) = \{E\} \times E$. If $\phi: K \to \Mock(d)$ is a semi-simplicial map then we can form the pullback 
$$\phi^*\pi(d) :  \phi^*{E}(d) \lra |K|;$$
this is an (oriented, topological, codimension $-d$) mock bundle over $|K|$.

We wish to explain why the spaces $|\Mock(d)|$ arise as the $d$th spaces of an $\Omega$-spectrum.

\begin{definition}
Let $\Mock_\partial(d+1, n)$ have $p$-simplices given by a $(d+1+p)$-manifold $E^{d+1+p} \subset \Delta^p \times \bR^{n} \times [0,\infty)$ satisfying (i) and (ii) from Definition \ref{defn:BordismSpace} as well as
\begin{enumerate}[(i')]
\setcounter{enumi}{2}
\item $E$ is topologically transverse to $\Delta^p \times \bR^{n} \times \{0\}$, and writing $\partial_v E := E \cap \Delta^p \times \{0\}$ there is a $\delta>0$ such that $E \cap (\Delta^p  \times \bR^{n} \times [0,\delta)) = \partial_v E \times [0,\delta)$, and $\partial E = (E \cap (\partial \Delta^p \times \bR^{n+1})) \cup \partial_v E$. 
\end{enumerate}
Let $\Mock_\partial(d+1) := \colim\limits_{n \to \infty} \Mock_\partial(d+1, n)$. 
\end{definition}

As above this semi-simplicial set is Kan, and carries a tautological family ${E}_\partial(d+1) \to |\Mock_\partial(d+1)|$ of $(d+1)$-manifolds with boundary, whose boundaries assemble into a family $\partial_v {E}_\partial(d+1) \to |\Mock_\partial(d+1)|$ of $d$-manifolds. Intersecting with $\Delta^p \times \bR^{n} \times \{0\}$ gives a semi-simplicial map
\begin{equation}\label{eq:ResMap}
res: \Mock_\partial(d+1) \lra \Mock(d),
\end{equation}
which is a Kan fibration, and classifies the family $\partial_v {E}_\partial(d+1)$. The fibre of this map over the empty manifold is isomorphic to $\Mock(d+1)$ (via a choice of homeomorphism $(0,\infty) \cong \bR$). One verifies that the connecting map $\partial : \pi_{i}(\Mock(d)) \to \pi_{i-1}(\Mock(d+1))$ is an isomorphism, as both sides are identified with cobordism classes of oriented topological $(d+i)$-manifolds and this map corresponds to the identity map. Thus there are equivalences
$$\Omega |\Mock(d)| \overset{\sim}\lra |\Mock(d+1)|$$
(and $|\Mock_\partial(d+1)| \simeq *$), and so an $\Omega$-spectrum $\Mock$ with $\Omega^{\infty+d} \Mock \simeq |\Mock(d)|$.

\subsection{Topological transversality}

There is a variant $\Mock'(d,n)$ of $\Mock(d,n)$ in which simplices $E \subset \Delta^p \times \bR^n$ are equipped with a choice of tubular neighbourhood (i.e.\ a $\bR^{n-d}$-bundle over $E$ with a homeomorphism to an open neighbourhood of $E$, both compatible with the collar structure given by (ii)) and the forgetful and Pontrjagin--Thom collapse maps give a zig-zag
$$|\Mock(d,n)| \longleftarrow |\Mock'(d,n)| \lra \Omega^n \mathrm{Th}(\gamma_{n-d} \to BSTop(n-d)).$$
In the limit as $n\to \infty$ the leftwards map is an equivalence (by the stable existence and uniqueness of normal microbundles \cite[p.\ 204]{KS} and the Kister--Mazur theorem \cite[p.\ 159]{KS}), and the right-hand side is the space $\Omega^{\infty+d} \mathrm{MSTop}$, giving a homotopy class of map
$$ |\Mock(d)| \lra \Omega^{\infty+d} \mathrm{MSTop}.$$

It follows from topological transversality (\cite[p.\ 85]{KS}, \cite[\S 9.6]{FreedmanQuinn}) that this map is a homotopy equivalence. Furthermore this discussion identifies the spectrum $\Mock$ with $\MSTop$, by comparing the analogue of the map \eqref{eq:ResMap} for manifolds in $\bR^{n} \times [0,\infty)$ with the path-fibration over $\Omega^n \mathrm{Th}(\gamma_{n-d} \to BSTop(n-d))$.

\subsection{Poincar{\'e} mock bundles}\label{sec:PMock}

If $\tau$ is a simplex, $E \subset \tau \times \bR^\infty$ is a closed subset, and $\sigma$ is a face of $\tau$, write $E_\sigma := (\sigma \times \bR^\infty) \cap E$. Write $E_{\partial \tau} = \cup_{\sigma < \tau} E_\sigma$, the union over all proper faces.

\begin{definition}\label{defn:PoincareBordismSpace}
For $d \in \bZ$, let $\Mock^P(d)$ be the semi-simplicial set with $p$-simplices given by pairs of a closed subset $E \subset \Delta^p \times \bR^\infty$ and a singular chain $[E_\sigma] \in C_{d+|\sigma|}(E_\sigma;\bZ)$ for each face $\sigma$ of $\Delta^p$, such that 
\begin{enumerate}[(i)]
\item each $E_\sigma$ is homotopy equivalent to a finite CW-complex,

\item $d[E_\sigma] = \sum_{i=0}^{|\sigma|} (-1)^i [E_{d_i \sigma}] \in C_{d+|\sigma|-1}(E_\sigma;\bZ)$, so that in particular $[E_\sigma]$ is a cycle in $C_{d+|\sigma|}(E_\sigma, E_{\partial \sigma};\bZ)$,

\item the maps 
\begin{align*}
- \frown [E_\sigma] : H^*(E_\sigma ; \bZ) &\lra H_{d+|\sigma|-*}(E_\sigma, E_{\partial \sigma}; \bZ)\\
- \frown d[E_\sigma] : H^*(E_{\partial \sigma} ; \bZ) &\lra H_{d-1+|\sigma|-*}(E_{\partial \sigma}; \bZ)
\end{align*}
are isomorphisms.
\end{enumerate}
Define face maps $d_i : \Mock^P(d)_p \to \Mock^P(d)_{p-1}$ by restricting $E$ to the $i$th face $\Delta_i^{p-1} \subset \Delta^p$, and taking those $[E_\sigma]$ with $\sigma \leq \Delta_i^{p-1}$.
\end{definition}

\begin{remark}
It is worth emphasising that such $E$'s are not Poincar{\'e} complexes (or -ads) in the sense of \cite[Section 2]{WallBook}---and so in the sense we have used earlier in this paper---as we are only asking for duality with $\bZ$-coefficients rather than with all local coefficients. This is deliberate! 
\end{remark}

Similarly to the case of mock bundles, the semi-simplicial set $\Mock^P(d)$  is Kan (the evident analogues of \cite[Lemma 1.2, Theorem 2.1 Addendum]{WallPoincare} for duality with $\bZ$-coefficients are used for verifying this) and the space $|\Mock^P(d)|$ carries a tautological family, given by defining
$${E}^P(d)_p := \{(E; t_0, t_1, \ldots, t_p ; x) \in \Mock^P(d)_p \times \Delta^p \times \bR^\infty \, | \, (t_0, t_1, \ldots, t_p ; x) \in E\},$$
and letting $\pi(d)_p : {E}^P(d)_p \to \Mock^P(d)_p \times \Delta^p$ denote the projection maps, which assemble to $\pi(d) : {E}^P(d) \lra |\Mock^P(d)|$. This tautological family can be pulled back along a semi-simplicial map $\phi : K \to \Mock^P(d)$ to obtain a Poincar{\'e} mock bundle over $|K|$.

Just as in the last section, one introduces the evident analogue $\Mock^P_\partial(d+1)$ (see the proof of Lemma \ref{lem:RanickiOr} below for a definition), to obtain equivalences $\Omega |\Mock^P(d)| \overset{\sim}\to |\Mock^P(d+1)|$ and so an associated $\Omega$-spectrum $\Mock^P$. 

\begin{remark}
The spectrum $\Mock^P$ is equivalent to Levitt's \cite{LevittPoincare} Poincar{\'e} bordism spectrum. In particular, there is a Pontrjagin--Thom map $\Mock^P \to \mathrm{MSG}$ but it is not an equivalence (its fibre is the 0-connected cover of quadratic $L$-theory, cf.\ \cite[Remark 19.9]{RanickiBook}, explicated in \cite[Remark 2.3]{MarkusNote}). The comparison with Levitt's theory uses the fact that the present naive definition of Poincar{\'e} complexes still have Spivak normal fibrations \cite{BrowderSpivak}, which is Levitt's definition of a Poincar{\'e} complex.
\end{remark}

Compact oriented topological manifolds with boundary admit fundamental chains with respect to which they have Poincar{\'e}--Lefschetz duality. We may choose such chains by induction over the skeleta of $\Mock(d)$ as follows. For each $0$-simplex $E \in \Mock(d)_0$ we choose a fundamental cycle for the oriented $d$-manifold $E$. Supposing such chains have been chosen for all simplices of $\Mock(d)$ of dimension $< p$, then for each $p$-simplex $E \in \Mock(d)_p$ we have a fundamental chain
$$\sum_{i=0}^{p} (-1)^i [E_{d_i(\Delta^p)}] \in C_{d+p-1}(E;\bZ)$$
for $E_{\partial \Delta^p}$, which is trivial in homology as this is the boundary of $E$ so we can take $[E] \in C_{d+p}(E;\bZ)$ to be a chain with boundary the above, and which also restricts to a fundamental cycle on any closed components of $E$. This gives (using also that compact topological manifolds are homotopy equivalent to finite CW-complexes) a map $\Mock(d) \lra \Mock^P(d)$, 
unique up to homotopy because at each stage we have chosen a top-dimensional chain of $E_\sigma$ which is unique up to a boundary. These maps assemble into a map of spectra
$$\Mock \lra \Mock^P$$
and hence give a map $\MSTop \to \Mock^P$.

\subsection{Categorical preliminaries}\label{sec:catPrelim}
In Subsections \ref{sec:catPrelim}--\ref{sec:SurgeryBelowMid} we will make extensive use of the definitions and constructions of \cite{No9I, No9II, No9III}. We will give references for all results that we use, but the reader will need to be familiar with the general set-up of those papers, which we will not review.

For a semi-simplicial set $K$, we will often work in the stable $\infty$-category 
$$\mathsf{C} := \mathsf{Fun}( \mathsf{Simp}(K)^{op}, \mathcal{D}^p(\bZ))$$
of functors from the (opposite of the) poset of simplices of $K$ to the derived $\infty$-category $\mathcal{D}^p(\bZ)$ of perfect $\bZ$-modules. On $\mathcal{D}^p(\bZ)$ we have the symmetric hermitian structure $\Qoppa = \Qoppa^s$ given by $\Qoppa(X) := \Hom_\bZ(X \otimes X, \bZ)^{hC_2}$, whose underlying bilinear functor is $B(X, Y) = \Hom_{\bZ}(X \otimes Y, \bZ)$ which is perfect with associated duality $D(X) = \Hom_\bZ(X, \bZ)$. The discussion in \cite[Construction 6.3.1]{No9I} shows that $\Qoppa$ induces a hermitian functor $\Qoppa_K$ on the cotensoring $\mathsf{C}$, and writing $\underline{\bZ}$ for the constant functor we can express this as 
$$\Qoppa_K(X) = \mathrm{Hom}_\mathsf{C}(X \otimes X, \underline{\bZ})^{hC_2},$$
where the tensor product in $\mathsf{C}$ is formed objectwise. By \cite[Proposition 6.3.2]{No9I} its underlying bilinear functor is $B_K(X,Y) = \Hom_\mathsf{C}(X \otimes Y, \underline{\bZ})$, and as $((\mathsf{Simp}(K)^{op})_{\sigma /})^{op}$ is finite (it is the poset of simplices of a fixed simplex $\sigma$) this bilinear functor is nondegenerate with associated duality
\begin{equation}\label{eq:KDuality}
D_K(X)(\sigma) = \holim_{\tau \in \mathsf{Simp}(\sigma)} \Hom_\bZ(X(\tau), \bZ).
\end{equation}
It is elementary to check that this duality is perfect (alternatively, observe that this may be checked for each simplex $K$, and appeal to \cite[Proposition 6.6.1]{No9I} for the finite simplicial complex given by a single simplex), making $(\mathsf{C}, \Qoppa_K)$ a Poincar{\'e} category.

\subsection{The Ranicki orientation}\label{sec:RanickiOrientation}

The Ranicki orientation is a certain map of spectra $\sigma: \MSTop \to \LL^s(\bZ)$. Its construction was outlined in \cite[Proposition 15.8]{RanickiPreprint} (for PL, rather than topological, manifolds), but the first rigorous implementation seems to not have been until \cite{LauresMcClure}, where it is constructed as a map of $E_1$-rings. We will give our own construction of this map, intuitively the same as Ranicki's but using the definition of the symmetric $L$-theory spectrum from \cite[Section 4.4]{No9II}. In fact we will construct a map $\sigma^P : \Mock^P \to \LL^s(\bZ)$, which may be precomposed with $\MSTop \to \Mock^P$ to obtain $\sigma$.

For any semi-simplicial set $K$ and semi-simplicial map $\phi : K \to \Mock^P(d)$ we can form the pullback 
$$\phi^*\pi := \phi^* {E}^P(d) \lra |K|,$$
a Poincar{\'e} mock bundle over $|K|$. For a simplex $\sigma$ of $K$ we write $E_\sigma := f(\sigma)$. This defines a functor
$$C : \mathsf{Simp}(K)^{op} \lra \mathcal{D}^p(\bZ)$$
by $C(\sigma) := C^*(E_\sigma ; \bZ)$, i.e.\ an object of the category $\mathsf{C}$ described above. 

We first explain how $C$ has the structure of a Poincar{\'e} object in $(\mathsf{C}, \Qoppa^{[-d]}_K)$.

\begin{lemma}\label{lem:FundChain}
There is a canonical morphism $[{E}] : C \to S^{-d} \otimes \underline{\bZ}$, a ``fibrewise fundamental chain''.
\end{lemma}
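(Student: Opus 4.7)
The plan is to build $[E]$ directly out of the fundamental-chain data $\{[E_\sigma]\}$ supplied by Definition \ref{defn:PoincareBordismSpace}(ii), assembling the simplex-wise pieces by means of the boundary formula $d[E_\sigma]=\sum_i(-1)^i[E_{d_i\sigma}]$ as higher coherence.

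First I would identify the simplex-wise component. A morphism $C(\sigma)=C^*(E_\sigma;\bZ)\to \bZ[-d]$ in $\mathcal{D}^p(\bZ)$ is classified by an element of $H_d(E_\sigma;\bZ)$, which by the Poincar\'e--Lefschetz duality for the pair $(E_\sigma,E_{\partial\sigma})$ of formal dimension $d+|\sigma|$ (condition (iii) of Definition \ref{defn:PoincareBordismSpace}) is identified with $H^{|\sigma|}(E_\sigma,E_{\partial\sigma};\bZ)$. I would pick a cocycle $\omega_\sigma\in C^{|\sigma|}(\sigma,\partial\sigma;\bZ)$ representing the top class and pull it back via $\pi_\sigma:E_\sigma\to\sigma$ to a relative cocycle $\pi_\sigma^*\omega_\sigma$, so that $z_\sigma:=\pi_\sigma^*\omega_\sigma\frown [E_\sigma]\in C_d(E_\sigma;\bZ)$ is a $d$-cycle; evaluation against $z_\sigma$ gives the desired $[E]_\sigma$. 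For a vertex this recovers ordinary evaluation against $[E_v]$.

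To package these components into a morphism of the $\infty$-category $\mathsf{C}$ I would use the $\hocolim/\mathrm{const}$ adjunction
\[
\Map_{\mathsf{C}}(C,\,S^{-d}\otimes\underline{\bZ})\;\simeq\;\Map_{\mathcal{D}^p(\bZ)}\!\Bigl(\hocolim_{\sigma\in\mathsf{Simp}(K)^{op}}C^*(E_\sigma;\bZ),\,\bZ[-d]\Bigr),
\]
model the right-hand hocolim by the standard bar complex whose $p$-simplices are $\bigoplus_{\sigma_0\le\cdots\le\sigma_p}C^*(E_{\sigma_p};\bZ)$, and define a chain map out of its totalization on a generator $\alpha\otimes(\sigma_0\le\cdots\le\sigma_p)$ by a signed evaluation of $\alpha$ against a cap-product expression built from $[E_{\sigma_p}]$ and $\omega_{\sigma_p}$. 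The length-one chains produce the simplex-wise $[E]_\sigma$, and the higher-length chains absorb the higher coherences.

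The main obstacle is to verify that this bar-complex formula is genuinely a chain map: the simplicial face differentials of the bar complex must cancel, with appropriate signs, against the boundaries $d[E_\sigma]=\sum_i(-1)^i[E_{d_i\sigma}]$ of the fundamental chains recorded in (ii). I expect this to reduce to careful sign-bookkeeping, with no ingredient beyond Definition \ref{defn:PoincareBordismSpace} itself, and in the process to yield the canonicality (independence of the choices of $\omega_\sigma$ up to coherent homotopy) as an automatic by-product.
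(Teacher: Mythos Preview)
Your proposal and the paper's proof both begin with the $\hocolim/\mathrm{const}$ adjunction, but diverge immediately after. The paper performs one further dualisation: since each $C(\sigma)=C^*(E_\sigma;\bZ)$ is perfect, $\Hom_\bZ(C(\sigma),\bZ)\simeq C_*(E_\sigma;\bZ)$, and so a map $\hocolim C^*(E_\sigma;\bZ)\to\bZ[-d]$ is the same as a map
\[
S^d\otimes\bZ\;\lra\;\holim_{\sigma} C_*(E_\sigma;\bZ).
\]
In the standard totalisation model for this homotopy limit, a $d$-cycle is literally a collection of chains $[E_\sigma]\in C_{d+|\sigma|}(E_\sigma;\bZ)$ satisfying $d[E_\sigma]=\sum_i(-1)^i[E_{d_i\sigma}]$, which is exactly condition (ii). So the proof is over in two lines, with no auxiliary choices.

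Your route stays on the cochain side and compensates for the degree mismatch (that $[E_\sigma]$ has degree $d+|\sigma|$ rather than $d$) by capping with pulled-back cocycles $\omega_\sigma$ representing the top class of $(\sigma,\partial\sigma)$. This is workable in spirit, but it introduces non-canonical choices that then have to be shown irrelevant, and more seriously, the ``signed evaluation against a cap-product expression built from $[E_{\sigma_p}]$ and $\omega_{\sigma_p}$'' for higher bar simplices is not actually specified: you would need, for a chain $\sigma_0\le\cdots\le\sigma_p$, homotopies relating the various $z_{\sigma_i}$, and these do not come from $[E_{\sigma_p}]$ and $\omega_{\sigma_p}$ alone but from the whole flag of fundamental chains. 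The bookkeeping you anticipate is real, but the missing ingredient is precisely the observation that this bookkeeping \emph{is} the totalisation of $\sigma\mapsto C_*(E_\sigma;\bZ)$. Once you see that, the $\omega_\sigma$ and cap products become superfluous: the double-dualisation absorbs the degree shift for free and lands you directly where the data of Definition~\ref{defn:PoincareBordismSpace}(ii) lives.
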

\begin{proof}
By adjunction a morphism as indicated is the same as a morphism
$$\hocolim_{\sigma \in \mathsf{Simp}(K)^{op}} C(\sigma) \lra S^{-d} \otimes \bZ$$
in $\mathcal{D}^p(\bZ)$, which is a morphism $S^d \to \holim\limits_{\sigma \in \mathsf{Simp}(K)^{op}} \mathrm{Hom}_\bZ(C(\sigma), \bZ)$. As $C(\sigma) = C^*(E_\sigma ; \bZ) = \mathrm{Hom}(C_*(E_\sigma;\bZ) , \bZ)$ and $C_*(E_\sigma;\bZ)$ is perfect, this is equivalent to a morphism
$$S^d \otimes \bZ \lra \holim_{\sigma \in \mathsf{Simp}(K)^{op}} C_*(E_\sigma;\bZ).$$

The data of the compatible fundamental chains $\{[E_\sigma]\}_{\sigma \in K}$ precisely gives such a $d$-cycle in this homotopy limit.
\end{proof}

Using this morphism we may form $q : C \otimes C \overset{-\smile -}\to C \overset{[{E}]}\to S^{-d} \otimes \underline{\bZ}$, which, as the multiplication on $C$ is commutative, determines a Hermitian form
$$q \in \Omega^\infty \Qoppa^{[-d]}_K(C) = \Omega^{\infty+d} \mathrm{Hom}_\mathsf{C}(C \otimes C, \underline{\bZ})^{hC_2}.$$
To see that this is nondegenerate, note that its adjoint $q_\sharp : C \to S^{-d} \otimes D(C)$ evaluated at $\sigma$ is the map
$$q_\sharp(\sigma) : C^*(E_\sigma;\bZ) \lra S^{-d} \otimes \holim_{\tau \subset \sigma}\mathrm{Hom}_\bZ(C^*(E_\tau;\bZ), \bZ)$$
given by cap product with the chain $[E_\sigma]$ and evaluation. There is an equivalence $C_*(E_\tau;\bZ) \overset{\sim}\to \mathrm{Hom}_\bZ(C^*(E_\tau;\bZ), \bZ)$ given by evaluation, which identifies this map with the map
$$q_\sharp(\sigma) : C^*(E_\sigma; \bZ) \lra S^{-d} \otimes \holim_{\tau \subset \sigma} C_*(E_\tau;\bZ) \simeq S^{-d-|\sigma|} \otimes C_*(E_\sigma, E_{\partial \sigma};\bZ)$$
given by cap product with the chain $[E_\sigma]$, and this is an equivalence by the definition of Poincar{\'e} mock bundle. Thus $(C, q)$ is indeed a Poincar{\'e} object in $(\mathsf{C}, \Qoppa^{[-d]}_K)$. 

Let us briefly recall how the $L$-theory space $\mathcal{L}(\bZ, \Qoppa^{[-d]})$ is defined in \cite[Section 4.4]{No9II}. It is the geometric realisation of the simplicial space with $p$-simplices given by the space(=$\infty$-groupoid) of Poincar{\'e} objects in the Poincar{\'e} category $\rho_p(\mathcal{D}^p(\bZ), \Qoppa^{[-d]}) := (\mathsf{Fun}(\mathsf{Simp}(\Delta^p)^{op}, \mathcal{D}^p(\bZ)), \Qoppa^{[-d]}_{\Delta^p})$.

We may then apply the following ``coassembly'' construction to $(C, q)$. For each $p$-simplex $\sigma : \Delta^p \to K$  we obtain a Poincar{\'e} object in $\rho_p(\mathcal{D}^p(\bZ), \Qoppa^{[-d]})$ by pullback: the underlying object is the functor
$$\sigma^*C : \mathsf{Simp}(\Delta^p)^{op} \lra \mathcal{D}^p(\bZ)$$
given by restriction of $C$, and it is made into a Poincar{\'e} object via the restriction $\sigma^*q$ of $q$. Applying this construction levelwise defines a map
$$\mathrm{Coass}(C, q) : |K| \lra |\mathrm{Pn} \rho(\mathcal{D}^p(\bZ), \Qoppa^{[-d]})| =: \mathcal{L}(\bZ, \Qoppa^{[-d]}).$$
We call this the \emph{family signature} of the Poincar{\'e} mock bundle $\pi: E \to |K|$. In particular, for the universal example $K=\Mock^P(d)$ with associated Poincar{\'e} object $(C_d, q_d)$ this construction defines a map
$$\Phi_d :  |\Mock^P(d)| \xrightarrow{\mathrm{Coass}(C_d, q_d)}  \mathcal{L}(\bZ, \Qoppa^{[-d]}).$$

The symmetric $L$-theory spectrum $\LL^s(\bZ)$ as described in the discussion after \cite[Corollary 4.4.5]{No9II} has $(-d)$th space equivalent to $\mathcal{L}(\bZ, \Qoppa^{[-d]})$ by \cite[Corollary 4.4.5]{No9II}, and the spectrum $\Mock^P$ has $(-d)$-th space $|\Mock^P(d)|$, so the maps $\Phi_{d}$ can in principle arise from a map of spectra. They do:

\begin{lemma}\label{lem:RanickiOr}
The $\Phi_d$ arise as $\Omega^{\infty+d}\sigma^P$ for a spectrum map $\sigma^P : \Mock^P \to \LL^s(\bZ)$. 
\end{lemma}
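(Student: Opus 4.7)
The plan is to realise the $\Omega$-spectrum structures on both $\Mock^P$ and $\LL^s(\bZ)$ by a parallel ``null-bordism'' construction, and transport the coassembly map across it. Concretely, I would define a boundary variant $\Phi_d^\partial$ of the coassembly map sitting over $\Phi_d$, whose fibre over the basepoint recovers $\Phi_{d+1}$; since the total spaces will be contractible on each side, taking horizontal fibres then delivers the required bonding-map compatibilities.

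First, introduce $\Mock^P_\partial(d+1)$ as the evident Poincar\'e analogue of $\Mock_\partial(d+1)$: a $p$-simplex is a collared subset $E \subset \Delta^p \times \bR^\infty \times [0,\infty)$ with $\partial_v E := E \cap (\Delta^p \times \bR^\infty \times \{0\})$, equipped with a compatible system of relative fundamental chains $[E_\sigma] \in C_{d+1+|\sigma|}(E_\sigma, \partial_v E_\sigma \cup E_{\partial \sigma};\bZ)$ exhibiting each $E_\sigma$ as a Poincar\'e--Lefschetz duality space relative to $\partial_v E_\sigma$. Arguing exactly as for $\Mock_\partial(d+1)$, restriction to $\{0\}$ gives a Kan fibration $\Mock^P_\partial(d+1) \to \Mock^P(d)$ with contractible total space and fibre $\Mock^P(d+1)$ over the empty bundle. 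On the $L$-theory side, the equivalence $\mathcal{L}(\bZ, \Qoppa^{[-d-1]}) \simeq \Omega \mathcal{L}(\bZ, \Qoppa^{[-d]})$ underlying \cite[Corollary 4.4.5]{No9II} is realised by a fibre sequence of spaces $\mathcal{L}(\bZ, \Qoppa^{[-d-1]}) \to \mathcal{L}^{\mathrm{lag}}(\bZ, \Qoppa^{[-d]}) \to \mathcal{L}(\bZ, \Qoppa^{[-d]})$ whose middle term (Poincar\'e objects equipped with a Lagrangian, i.e.\ metabolic forms) is contractible.

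I would then build $\Phi_d^\partial$ by mimicking Lemma \ref{lem:FundChain} on the boundary datum: given $\phi : K \to \Mock^P_\partial(d+1)$, the pair $(\partial_v E_\sigma \subset E_\sigma)$ supplies a restriction arrow $C^*(E_\sigma;\bZ) \to C^*(\partial_v E_\sigma;\bZ)$ in $\mathcal{D}^p(\bZ)$, natural in $\sigma$, i.e.\ a functor $\mathsf{Simp}(K)^{op} \to \mathsf{Fun}([1], \mathcal{D}^p(\bZ))$. The relative fundamental chain then furnishes, by the same adjunction argument used to build $[E]$ in Lemma \ref{lem:FundChain}, a canonical Lagrangian structure on the Poincar\'e object $(C, q)$ attached to $\partial_v E$, because Poincar\'e--Lefschetz duality with $\bZ$-coefficients is precisely the statement that the arrow $C^*(E;\bZ) \to C^*(\partial_v E;\bZ)$ equals its own cofibre under the shifted duality. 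Coassembling levelwise produces $\Phi_d^\partial : |\Mock^P_\partial(d+1)| \to \mathcal{L}^{\mathrm{lag}}(\bZ, \Qoppa^{[-d]})$ fitting into the square
\begin{equation*}
\begin{tikzcd}
|\Mock^P_\partial(d+1)| \rar{\Phi_d^\partial} \dar{\res} & \mathcal{L}^{\mathrm{lag}}(\bZ, \Qoppa^{[-d]}) \dar \\
|\Mock^P(d)| \rar{\Phi_d} & \mathcal{L}(\bZ, \Qoppa^{[-d]})
\end{tikzcd}
\end{equation*}
which by construction restricts on fibres over the empty bundle to $\Phi_{d+1}$. Looping this square then identifies $\Phi_{d+1}$ with $\Omega \Phi_d$ under the bonding equivalences, which is precisely the compatibility needed to assemble the $\Phi_d$ into a spectrum map $\sigma^P$.

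The main obstacle, I expect, is verifying that the Poincar\'e--Lefschetz datum on the arrow functor $C^*(E;\bZ) \to C^*(\partial_v E;\bZ)$ genuinely assembles to a Poincar\'e object in the metabolic Poincar\'e $\infty$-category of \cite[Section 2.3]{No9II}, and not merely a symmetric form with null-bordism up to higher coherence; this must moreover be done functorially in $K$ and compatibly with looping, so that the identification of the fibre of $\Phi_d^\partial$ with $\Phi_{d+1}$ is canonical. This is an unpacking exercise inside the hermitian-structure formalism of \cite{No9I}, but with enough moving parts that one should proceed carefully.
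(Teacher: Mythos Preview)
Your proposal is correct and follows essentially the same route as the paper: introduce $\Mock^P_\partial(d+1)$, map it to the $L$-theory of the metabolic Poincar\'e category (your $\mathcal{L}^{\mathrm{lag}}(\bZ,\Qoppa^{[-d]})$ is the paper's $\mathcal{L}(\mathrm{Met}(\mathcal{D}^p(\bZ),\Qoppa^{[-d]}))$, with the fibre sequence coming from the metabolic Poincar\'e--Verdier sequence \cite[Example 1.2.5, Theorem 4.4.2]{No9II}), and pass to fibres. The ``obstacle'' you flag at the end is handled in the paper quite directly: the Lagrangian is the restriction map $f:C^*(E_\sigma)\to C^*(\partial_v E_\sigma)$, and the required nullhomotopy of $f^*q$ is supplied by a fibrewise fundamental chain for the family with boundary, constructed exactly as in Lemma~\ref{lem:FundChain}; no further coherence beyond that is needed.
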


We define the composition
$$\sigma: \MSTop \simeq \Mock \lra \Mock^P \overset{\sigma^P}\lra \LL^s(\bZ)$$
to be the Ranicki orientation. The construction is conceptually the same as the map constructed in \cite{LauresMcClure}, though implemented differently. It is also conceptually the same as the maps constructed in \cite[Proposition 15.7]{RanickiPreprint}, \cite[p.\ 289]{RanickiTSO}, \cite[Proposition 16.1]{RanickiBook} and \cite[Proposition 7.10 (1)]{KMM}, though see Remark \ref{rem:RanickiBookIssues}.

\begin{proof}
We must establish a compatibility between the $\Phi_d$'s, and to do so we give a precise definition of $\Mock_\partial^P(d+1)$. For a simplex $\tau$, a face $\sigma \leq \tau$, and a closed subset $E \subset \tau \times \bR^\infty \times [0,\infty)$, write $\partial_v E_\sigma := (\sigma \times \bR^\infty \times \{0\}) \cap E$.  Let $\Mock_\partial^P(d+1)$ have $p$-simplices given by a closed subset $E \subset \Delta^p \times \bR^{\infty} \times [0,\infty)$ and a singular chain $[E_\sigma] \in C_{d+1+|\sigma|}(E_\sigma;\bZ)$ for each face $\sigma$ of $\Delta^p$, such that
\begin{enumerate}[(i')]
\item each $E_\sigma$ and $\partial_v E_\sigma$ is homotopy equivalent to a finite CW-complex,
\item the chain $[\partial_v E_\sigma] := d[E_\sigma] - \sum_{i=0}^{|\sigma|} [E_{d_i \sigma}] \in  C_{d+|\sigma|}(E_\sigma ; \bZ)$ lies in the subgroup $C_{d+|\sigma|}(\partial_v E_\sigma ; \bZ)$, so $[E_\sigma]$ is a cycle in $C_{d+1+|\sigma|}(E_\sigma, \partial_v E_\sigma \cup E_{\partial \sigma} ; \bZ)$,
\item the maps
\begin{align*}
- \frown [E_\sigma] : H^*(E_\sigma ; \bZ) &\lra H_{d+1+|\sigma|-*}(E_\sigma, \partial_v E_\sigma \cup E_{\partial \sigma}; \bZ)\\
- \frown d[E_\sigma] : H^*(\partial_v E_\sigma \cup E_{\partial \sigma} ; \bZ) &\lra H_{d+|\sigma|-*}(\partial_v E_\sigma \cup E_{\partial \sigma}; \bZ)
\end{align*}
are isomorphisms.
\end{enumerate}
Assigning to $(E, \{[E_\sigma]\})$ the data $(\partial_v E, \{[\partial_v E_\sigma]\})$, where the $[\partial_v E_\sigma]$ are given by (ii'), defines a semi-simplicial map
$$res: \Mock_\partial^P(d+1) \lra \Mock^P(d),$$
which is furthermore a Kan fibration. (The key calculation is to show that the maps $- \frown [\partial_v E_\sigma] : H^*(\partial_v E_\sigma ; \bZ) \to H_{d+|\sigma|-*}(\partial_v E_\sigma, \partial_v E_{\partial \sigma};\bZ)$, and $- \frown d[\partial_v E_\sigma] :H^*(\partial_v E_{\partial \sigma};\bZ) \to H_{d-1+|\sigma|-*}(\partial_v E_{\partial \sigma};\bZ)$ are isomorphisms, which follows by repeatedly using \cite[Lemma 1.2 and Theorem 2.1 Addendum]{WallPoincare}.) The space $|\Mock_\partial^P(d+1)|$ carries a tautological family ${E}^P_\partial(d+1)$ and the map $res$ classifies the associated family $\partial_v {E}^P_\partial(d+1)$ given by intersecting with $\bR^\infty \times \{0\}$. The fibre of the map $res$ over the basepoint (which is the empty space) is isomorphic to $\Mock^P(d+1)$ (via a choice of homeomorphism $(0,\infty) \cong \bR$). 

We wish to establish a commutative diagram
\begin{equation*}
\begin{tikzcd}
{|\Mock^P(d+1)|} \rar \dar{\mathrm{Coass}(C_{d+1})}& {|\Mock_\partial^P(d+1)|} \rar{|res|} \dar& {|\Mock^P(d)|} \dar{\mathrm{Coass}(C_d)}\\
\mathcal{L}(\bZ, \Qoppa^{[-d-1]}) \rar& \mathcal{L}(\mathrm{Met}(\mathcal{D}^p(\bZ), \Qoppa^{[-d]})) \rar& \mathcal{L}(\bZ, \Qoppa^{[-d]}),
\end{tikzcd}
\end{equation*}
in which the rows are homotopy fibre sequences. The lower sequence comes from the metabolic Poincaré--Verdier sequence \cite[Example 1.2.5]{No9II} and the fact that $\mathcal{L}(-)$ is Verdier-localising and bordism-invariant \cite[Theorem 4.4.2]{No9II}. The two middle spaces are contractible so this will give the desired compatibility.
 
 To obtain the middle vertical map in this diagram observe that $\mathrm{Coass}(C_d) \circ |res|$ is the construction from above applied with $K = \Mock_{\partial}^P(d+1)$ and $\phi=res$, the map that classifies the family $\partial_v {E}^P_\partial(d+1) \to |\Mock_\partial^P(d+1)|$. Recall that Poincar{\'e} objects in $\mathrm{Met}(\mathcal{D}^p(\bZ), \Qoppa^{[-d]})$ encode Poincar{\'e} objects in $(\mathcal{D}^p(\bZ), \Qoppa^{[-d]})$ equipped with a lagrangian, so we must equip the Poincar{\'e} object $(res)^*(C_d, q_d)$ associated to $\partial_v {E}^P_\partial(d+1) \to |\Mock_\partial^P(d+1)|=|K|$ with a lagrangian. 
 
 This of course comes from the fact that $\partial_v {E}^P_\partial(d+1)$ is canonically the boundary of the family  ${E}_\partial^P(d+1) \to |\Mock_\partial^P(d+1)|$. Define a morphism $f : L \to (res)^*C_d$ in $\mathsf{C}$ by the restriction map
 $$f : L(E) := C^*(E) \lra C^*(\partial_v E) = ((res)^*C_d)(E)$$
 and choose the nullhomotopy of $f^*q_d$ given by the nullhomotopy of the composition $L \overset{f}\to (res)^*C_d \overset{[\partial_v {E}^P_\partial(d+1)]}\to S^{-d} \otimes \underline{\bZ}$
 provided by a fibrewise fundamental chain $[{E}^P_\partial(d+1)]$ of the family ${E}^P_\partial(d+1) \to |\Mock^P_\partial(d+1)|$ extending $[\partial_v {E}^P_\partial(d+1)]$, constructed as in Lemma \ref{lem:FundChain}. This gives the middle vertical map, and as we chose a fundamental chain, on $(d+1)$-manifolds without boundary it restricts to a map homotopic to $\mathrm{Coass}(C_{d+1})$.
\end{proof}

\subsection{The map $inc$}\label{sec:MapInc}

Recall that $\mathcal{L}(\bZ, \Qoppa^{[-d]})$ is defined as the geometric realisation of the simplicial space $[p] \mapsto \mathrm{Pn} \rho_p(\mathcal{D}^p(\bZ), \Qoppa^{[-d]})$, whose space of 0-simplices is $\mathrm{Pn}(\mathcal{D}^p(\bZ), \Qoppa^{[-d]})$, the space(=$\infty$-groupoid) of Poincar{\'e} objects in the Poincar{\'e} category $(\mathcal{D}^p(\bZ), \Qoppa^{[-d]})$.

Suppose first that $(H, \lambda)$ is a free $\bZ$-module $H$ with a $(-1)^n$-symmetric bilinear form $\lambda : H \otimes H \to \bZ$ which is nondegenerate, i.e.\ the adjoint $\lambda^\mathrm{ad} : H \to \mathrm{Hom}_\bZ(H, \bZ)$ is an isomorphism. Then there is a Poincar{\'e} object in $(\mathcal{D}^p(\bZ), \Qoppa^{[-2n]})$ given by $H[-n]$ equipped with the symmetric form
$$q_\lambda : H[-n] \otimes H[-n] \overset{\lambda}\lra \bZ[-2n].$$
As $H[-n]$ is concentrated in a single degree, the space of automorphisms of $(H[-n], q_\lambda)$ in the $\infty$-groupoid $\mathrm{Pn}(\mathcal{D}^p(\bZ), \Qoppa^{[-2n]})$ is homotopy-discrete, and is equivalent to $\mathrm{Aut}(H, \lambda)$: thus the path-component of $(H[-n], q_\lambda)$ in $\mathrm{Pn}(\mathcal{D}^p(\bZ), \Qoppa^{[-2n]})$ is equivalent to $B\mathrm{Aut}(H, \lambda)$. This yields a map
\begin{equation*}
inc : B\mathrm{Aut}(H, \lambda) \lra \mathrm{Pn}(\mathcal{D}^p(\bZ), \Qoppa^{[-2n]}) \lra  \mathcal{L}(\bZ, \Qoppa^{[-2n]}) \simeq \Omega^{\infty+2n} \LL^s(\bZ).
\end{equation*}

Suppose now that $(T, \ell)$ is a finite $\bZ$-module $T$ with a $(-1)^n$-symmetric linking form $\ell : T \otimes T \to \bQ/\bZ$ which is nondegenerate, i.e.\ the adjoint $\ell^\mathrm{ad} : T \to \mathrm{Hom}_\bZ(T, \bQ/\bZ)$ is an isomorphism, then there is a Poincar{\'e} object in $(\mathcal{D}^p(\bZ), \Qoppa^{[-(2n-1)]})$ given by $T[-n]$ equipped with the symmetric form
$$q_\ell: T[-n] \otimes^\bL T[-n] \lra T[-n] \otimes T[-n] \overset{\ell}\lra \bQ/\bZ[-2n] \overset{\beta}\lra \bZ[-(2n-1)]$$
where the first map is the truncation and the last is the universal Bockstein. (Before now we have been working in $\mathcal{D}^p(\bZ)$ and all tensor products have been implicitly derived: here we indicate that the first is derived and the second is not.) Again, as $T[-n]$ is concentrated in a single degree the space of automorphisms of $(T[-n], q_\ell)$ is equivalent to the discrete group $\mathrm{Aut}(T, \ell)$, which as above yields a map
\begin{equation*}
inc : B\Aut(T, \ell) \lra \mathrm{Pn}(\mathcal{D}^p(\bZ), \Qoppa^{[-(2n-1)]}) \lra \mathcal{L}(\bZ, \Qoppa^{[-(2n-1)]}) \simeq \Omega^{\infty+2n-1} \LL^s(\bZ).
\end{equation*}

\subsection{Surgery above the middle dimension}\label{sec:SurgeryBelowMid}
Suppose now that $\pi : {E} \to |K|$ is an oriented Poincar{\'e} mock bundle of dimension $d$ which has the property that the inclusions $E_\tau \subset E_\sigma$ are $\bZ$-homology equivalences whenever $\tau \subset \sigma$. (For example, $\pi$ could be a block bundle.) Then the associated functor
$$C : \mathsf{Simp}(K)^{op} \lra \mathcal{D}^p(\bZ)$$
has the property that it sends each morphism to an equivalence. Write $\mathsf{C}_{loc} \subset \mathsf{C}$ for the full subcategory of those functors sending each morphism to an equivalence. It inherits a hermitian structure $\Qoppa_{K, loc}$ from $\Qoppa_{K}$ by restriction, and the associated bilinear functor is still perfect, with duality $D_{K, loc}$ still given by the formula for $D_K$ in \eqref{eq:KDuality}, as this duality preserves the subcategory $\mathsf{C}_{loc}$. Furthermore, on this subcategory the duality simplifies to
\begin{equation}\label{eq:DualityKloc}
D_{K, loc}(X)(\sigma) \simeq \Hom_\bZ(X(\sigma), \bZ),
\end{equation}
because the homotopy limit in \eqref{eq:KDuality} is over a (homotopically) constant diagram.

We will make use of this to do algebraic surgery on $(C, q) \in (\mathsf{C}_{loc}, \Qoppa_{K, loc}^{[-d]})$, using the conveniently-packaged result of \cite[Section 1.3]{No9III}. 

If $d=2n$ then we consider the usual $t$-structure on $\mathcal{D}^p(\bZ)$. The duality $D(-) = \mathrm{Hom}_\bZ(-, \bZ)$ associated to the symmetric Poincar{\'e} structure satisfies $D(\mathcal{D}^p(\bZ)_{\leq 0}) \subset \mathcal{D}^p(\bZ)_{\geq -1}$, as $\bZ$ has global dimension 1, as well as $D(\mathcal{D}^p(\bZ)_{\geq 0}) \subset \mathcal{D}^p(\bZ)_{\leq 0}$. 
Applied objectwise, this yields a $t$-structure on $\mathsf{C}_{loc}$, and the duality $D_{K, loc}$ satisfies $D_{K, loc}((\mathsf{C}_{loc})_{\leq 0}) \subset (\mathsf{C}_{loc})_{\geq -1}$ and  $D_{K, loc}((\mathsf{C}_{loc})_{\geq 0}) \subset (\mathsf{C}_{loc})_{\leq 0}$, as the expression \eqref{eq:DualityKloc} shows that this duality is performed objectwise.\footnote{It is \emph{not} the case that $D_K (\mathsf{C}_{\leq 0}) \subset \mathsf{C}_{\geq -1}$; this is the crucial place where the assumption that each inclusion $E_\tau \subset E_\sigma$ is a $\bZ$-homology equivalence enters.} We apply the algebraic surgery move of \cite[Proposition 1.3.1]{No9III} to the Poincar{\'e} object $(C,q) \in (\mathsf{C}_{loc}, \Qoppa_{K, loc}^{[-2n]})$, using this $t$-structure, $r=\infty$, and $a=0$, to obtain a cobordism to a $(C', q')$ where $C'$ is $(-n)$-connective. As $S^{-2n} \otimes D_{K, loc}(C') \simeq  C'$, it follows that $C'$ is also $(-n)$-truncated, so it is an $n$-fold desuspension of an object in the heart of this $t$-structure, i.e.\ a local coefficient system on $K$. Contemplating the surgery move of \cite[Proposition 1.3.1]{No9III} shows that $H_{-n}(C')$ is the local coefficient system $\sigma \mapsto H^n(E_\sigma ; \bZ)/tors$, and the symmetric form $q'$ on it is that given by cup product and evaluation against the fundamental class of a fibre.

If $d=2n+1$ then we will proceed similarly, but use a modified $t$-structure. Define a $t$-structure on $\mathcal{D}^p(\bZ)$ by declaring that $X \in \mathcal{D}^p(\bZ)_{\geq 0}$ if $H_i(X)=0$ for $i<-1$ and $H_{-1}(X)$ is torsion, and that $X \in \mathcal{D}^p(\bZ)_{\leq 0}$ if $H_i(X)=0$ for $i>0$ and $H_0(X)$ is torsionfree. This is easily checked to define a $t$-structure, and the duality $D(-) = \mathrm{Hom}_\bZ(-, \bZ)$ associated to the symmetric Poincar{\'e} structure satisfies $D(\mathcal{D}^p(\bZ)_{\leq 0}) \subset \mathcal{D}^p(\bZ)_{\geq 0}$. Applied objectwise, this yields a $t$-structure on $\mathsf{C}_{loc}$, and as above the duality $D_{K, loc}$ satisfies $D_{K, loc}((\mathsf{C}_{loc})_{\leq 0}) \subset (\mathsf{C}_{loc})_{\geq 0}$. We apply the algebraic surgery move of \cite[Proposition 1.3.1]{No9III} to the Poincar{\'e} object $(C,q) \in (\mathsf{C}_{loc}, \Qoppa_{K, loc}^{[-(2n+1)]})$, using this $t$-structure, $r=\infty$, and $a=-1$,  to obtain a cobordism to a $(C', q')$ where $C'$ is $(-n)$-connective with respect to this $t$-structure. That is, each chain complex $C'(\sigma)$ has $H_i(C'(\sigma))=0$ for $i < -n-1$ and $H_{-n-1}(C'(\sigma))$ torsion. By the Universal Coefficient Theorem we then have $H_i(D(C'(\sigma))) = 0$ for $i > n$, and as $q'$ induces equivalences $S^{-(2n+1)} \otimes D(C'(\sigma)) \simeq  C'(\sigma)$, it follows that each $C'(\sigma)$ only has homology in degree $-(n+1)$. Thus the $(n+1)$-fold suspension of $C'$ is in the heart of the \emph{ordinary} $t$-structure, i.e.\ is a local coefficient system on $K$. Contemplating the surgery move of \cite[Proposition 1.3.1]{No9III} shows that $H_{-(n+1)}(C')$ is the local coefficient system $\sigma \mapsto tors H^{n+1}(E_\sigma ; \bZ)$, and the symmetric form $q'$ on it is that given by the linking form.

In both cases, the coassembly of the cobordism from $(C, q)$ to $(C', q')$ produces a homotopy $\mathrm{Coass}(C, q) \simeq \mathrm{Coass}(C', q') : |K| \to \mathcal{L}(\bZ, \Qoppa^{[-d]})$, and if $F$ is a typical fibre of $\pi : E \to |K|$ then the discussion above identifies $\mathrm{Coass}(C', q')$ with the composition
$$|K| \overset{\phi}\lra \begin{cases}
B\mathrm{Aut}(H^n(F;\bZ)/tors, \lambda) & d=2n \\
B\mathrm{Aut}(tors\, H^{n+1}(F;\bZ), \ell) & d=2n+1
\end{cases} \xrightarrow{inc} \Omega^{\infty+d} \LL^s(\bZ).$$
The identifies the family signature $\mathrm{Coass}(C, q)$ with the lower composition in the statement of Theorem \ref{thm:FSTLThy}.

\subsection{Proof of Theorem \ref{thm:FSTLThy}}
Suppose now that $\pi : E \to |K|$ is a block bundle, or more generally a mock bundle such that the inclusions $E_\tau \subset E_\sigma$ are $\bZ$-homology equivalences whenever $\tau \subset \sigma$, classified by $f : |K| \to |\Mock(d)|$. There is a diagram
\begin{equation*}
\begin{tikzcd}
{|K|} \arrow[rd, "f"] \arrow[rddd, bend right=20, swap, "{\mathrm{Coass}(C, q)}"] \arrow[rrrd, bend left=20, "\alpha"]\\
& {|\Mock(d)|} \dar \arrow[rr, "\simeq"] && {\Omega^{\infty+d} \MSTop} \ar[dd, "{\Omega^{\infty+d} \sigma}"] \\
& {|\Mock^P(d)|} \arrow[d, "{\mathrm{Coass}(C_d, q_d)}"] \arrow[rrd, bend left=15, "{\Omega^{\infty+d} \sigma^P}"]  \\
& {\mathcal{L}(\bZ, \Qoppa^{[-d]})} \arrow[rr, "\simeq"] & & \Omega^{\infty+d} \LL^s(\bZ)
\end{tikzcd}
\end{equation*}
in which each region commutes (up to homotopy) tautologically, and by the discussion in the last section the anticlockwise composition agrees with $inc \circ \phi$, proving Theorem \ref{thm:FSTLThy}.

\begin{remark}\label{rem:RanickiBookIssues}
The main difficulty in implementing the above in the framework of \cite{RanickiBook} is that Ranicki only considers bounded chain complexes of finitely-generated free (or projective) modules, and (co)chains on a manifold is never equal to such a thing. The usual solution to this issue seems to have been to pretend otherwise, or to change definitions but assume that Ranicki's results are unchanged, or to restrict to PL manifolds and use simplicial chains for some choice of PL-triangulation instead. 
\end{remark}

\section{Twisted signature formulas in $L$-theory}\label{sec:Addenda}

We consider Theorem \ref{thm:FSTLThy} to be the most natural formulation of the Family Signature Theorem, but for applications we must also provide tools for understanding the bottom map $inc$, just as we did in Section \ref{sec:TwistedFormulaKO} for the $KO[\tfrac{1}{2}]$-theory formulation. This will require significantly more external machinery than the discussion so far. In particular it relies extensively on the relation between $L$-theory and Grothendieck--Witt theory developed in \cite{No9I, No9II, No9III}, and we must now assume more familiarity with these papers.

\subsection{The symmetric $L$-theory spectrum}\label{sec:LThyBG}

Following Hebestreit, Land, and Nikolaus \cite{HLN}, the symmetric $L$-theory spectrum of the integers may be described as follows: there is a fibration sequence
\begin{equation}\label{eq:dRsequence}
\mathrm{dR} \lra \LL^s(\bZ) \lra \LL^s(\bR),
\end{equation}
where $\pi_*(\LL^s(\bR)) = \bZ[x^{\pm 1}]$ with $|x|=4$ and $\pi_i(\mathrm{dR})$ is $\bZ/2$ if $i \equiv 1 \mod 4$ and is zero otherwise. The homotopy groups in degrees $4i$ detect the signature, and those in degrees $4i+1$ detect the de Rham invariant.

There are equivalences $\LL^s(\bZ)[\tfrac{1}{2}] \simeq \LL^s(\bR)[\tfrac{1}{2}] \simeq \KO[\tfrac{1}{2}]$, and we would like to say that Theorem \ref{thm:FSTSullivan} is obtained from Theorem \ref{thm:FSTLThy} by inverting 2 and using such an equivalence. In order for the Sullivan orientation $\Delta_{Top}$ to agree (up to phantom maps) with the Ranicki orientation $\sigma$ under such an equivalence, it suffices (by construction of $\Delta_{Top}$) to choose an equivalence inducing the ring map $a \mapsto x : \bZ[\tfrac{1}{2}][a^{\pm 1}] = \pi_*(\KO[\tfrac{1}{2}]) \to \pi_*(\LL^s(\bR)[\tfrac{1}{2}]) = \bZ[\tfrac{1}{2}][x^{\pm 1}]$ on homotopy groups. Choosing this isomorphism between their homotopy groups we obtain an isomorphism of generalised homology theories
$$KO[\tfrac{1}{2}]_*(-) \overset{\sim}\leftarrow MSO_*(-) \underset{MSO_*}\otimes \bZ[\tfrac{1}{2}][a^{\pm 1}] \cong MSO_*(-) \underset{MSO_*}\otimes \bZ[\tfrac{1}{2}][x^{\pm 1}] \overset{\sim}\to L^s(\bR)[\tfrac{1}{2}]_*(-),$$
as the right hand map can be shown to be an isomorphism using the Landweber exact functor theorem \cite[Example 3.4]{Landweber} just as the left hand map is. This gives an equivalence of representing spectra $\KO[\tfrac{1}{2}] \simeq \LL^s(\bR)[\tfrac{1}{2}]$ inducing the required map on homotopy groups. 

At the prime 2, following Taylor and Williams \cite[Section 2]{TaylorWilliams} one can use that $\MSO_{(2)}$ is a generalised Eilenberg--MacLane spectrum to deduce that any module spectrum over it is too. This applies to $\LL^s(\bZ)_{(2)}$ and $\LL^s(\bR)_{(2)}$ via the map of homotopy ring\footnote{In our construction we have not justified that $\sigma$ is a map of homotopy ring spectra. We can appeal to e.g.\ \cite{LauresMcClure} for this.} spectra $\MSO \to \MSTop \overset{\sigma}\to \LL^s(\bZ)$, and hence to $\mathrm{dR} = \mathrm{dR}_{(2)}$ using \eqref{eq:dRsequence}. Taylor and Williams produce specific maps
\begin{align*}
L: \LL^s(\bZ)_{(2)} &\lra \bigoplus_{i \in \bZ} \H\bZ_{(2)}[4i]\\
r: \LL^s(\bZ)_{(2)} &\lra \bigoplus_{j \in \bZ} \H\bZ/2[4j+1]
\end{align*}
which combine to give an equivalence. Pulled back along the Ranicki orientation they behave as follows. The class $\sigma^*L$ corresponds under the Thom isomorphism with Morgan and Sullivan's class $\mathscr{L} \in H^*(BSTop ; \bZ_{(2)})$ constructed in \cite[Section 7]{MorganSullivan}, which is in turn characterised by restricting to the inverse of the Hirzebruch $L$-class when rationalised, and the square of the total Wu class\footnote{Characterised by $\Sq(V) = 1/w$ for $w \in H^*(BSTop;\bZ/2)$ the total Stiefel--Whitney class. It has the form $V = 1 + V_2 + V_4 + \cdots$; the odd components vanish, by the argument of p.\ 483 of \cite{MorganSullivan}, using topological manifolds whose Gauss maps $M \to BSTop$ are highly connected.} $V$ when reduced modulo 2. The class $\sigma^*r$ corresponds under the Thom isomorphism with $\sum_{i \geq 0}V_{2i} \cdot \Sq^1V_{2i}$, which is the well-known characteristic class measuring the de Rham invariant \cite{LMP}. The composition $\mathrm{dR} \to \LL^s(\bZ) \to \LL^s(\bZ)_{(2)} \overset{r}\to \bigoplus_{j \in \bZ} \H \bZ/2[4j+1]$ is necessarily an equivalence, which gives a splitting $\LL^s(\bZ) \simeq \LL^s(\bR) \oplus \mathrm{dR}$ such that $\LL^s(\bR) \to \LL^s(\bZ) \to \LL^s(\bZ)_{(2)} \overset{L}\to \bigoplus_{i \in \bZ} \H \bZ_{(2)}[4i]$ is a 2-local equivalence.

A more structured approach is also possible. Hebestreit, Land, and Nikolaus describe \cite[Section 3]{HLN} an $E_1$-algebra map $\H \bZ_{(2)} \to \MSO_{(2)}$, which then endows any $\MSO_{(2)}$-module with a canonical $\H \bZ_{(2)}$-module structure. Using \cite[Corollary 3.8]{HLN} it follows that there are unique equivalences of $\H \bZ_{(2)}$-modules
\begin{equation}\label{eq:HZ2ModEqs}
\LL^s(\bR)_{(2)} \simeq \bigoplus_{i \in \bZ} \H\bZ_{(2)}[4i] \quad\quad\quad \mathrm{dR} \simeq \bigoplus_{j \in \bZ} \H\bZ/2[4j+1]
\end{equation}
inducing the identity on homotopy groups. They also show \cite[Corollary 4.3]{HLN} that there is a unique splitting $\LL^s(\bR) \to \LL^s(\bZ)$ of $E_1$-algebras which 2-locally is a splitting of $\H \bZ_{(2)}$-algebras.

\begin{remark}
 We do not assert anything about any compatibility between the splittings of the previous paragraph and those obtained using the maps $L$ and $r$ above. Using the discussion in \cite[Section 2]{TaylorWilliams} it seems that this kind of question comes down to whether the class $\mathscr{L} \in H^*(BSO;\bZ_{(2)})$ is trivial on the bundle classified by $\eta: \tau_{\geq 2}\Omega^2 S^3 \to BSO$.
\end{remark}

\subsection{Odd dimensions}\label{sec:AddendaOdd}

In Section \ref{sec:MapInc} we have explained how a nondegenerate $(-1)^n$-symmetric linking form $(T,\ell)$ yields a Poincar{\'e} object $(T[-n], q_\ell)$ and hence a map
\begin{equation}\label{eq:incMap}
inc : B\Aut(T, \ell) \lra \mathcal{L}(\bZ, \Qoppa^{[-(2n-1)]}) = \Omega^{\infty+2n-1} \LL^s(\bZ).
\end{equation}

\begin{theorem}\label{thm:LinkingTriv}
The map \eqref{eq:incMap} is homotopic to a constant map.\footnote{Though is not in general nullhomotopic, as when $2n+1 \equiv 1 \mod 4$ it will land in the path-component dictated by the de Rham invariant of $T$, which can be nontrivial.}
\end{theorem}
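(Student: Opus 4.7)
My plan is to show that $inc$ factors up to homotopy through a single point of $\Omega^{\infty+2k-1}\LL^s(\bZ)$, by separately analysing its image in each factor of the splittings of $\LL^s(\bZ)$ established in Section~\ref{sec:LThyBG}. First, since the isomorphism class of the Poincar{\'e} object $(T[-k], q_\ell)$ is preserved by any element of $\Aut(T, \ell)$, the image of $inc$ is automatically contained in a single path component, labelled by $[T, \ell] \in L^s_{2k-1}(\bZ)$, which is the de Rham invariant of $(T, \ell)$ when $k$ is odd and zero when $k$ is even. This identifies the constant to which $inc$ should be homotopic.

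The $\LL^s(\bR)$-summand is straightforward: under base-change to $\bR$ the object $T[-k]$ maps to $T \otimes_\bZ \bR\,[-k] = 0$ since $T$ is torsion, and this vanishing is natural in $\Aut(T, \ell)$, so the composition of $inc$ with $\Omega^{\infty+2k-1}\LL^s(\bZ) \to \Omega^{\infty+2k-1}\LL^s(\bR)$ factors through the zero Poincar{\'e} object and is therefore null-homotopic. Combined with the equivalence $\LL^s(\bZ)[\tfrac{1}{2}] \simeq \LL^s(\bR)[\tfrac{1}{2}]$ this already proves the theorem after inverting $2$.

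The main content is therefore the prime-$2$ analysis of the $\mathrm{dR}$-summand. By the Hebestreit--Land--Nikolaus equivalence $\mathrm{dR} \simeq \bigoplus_j \H\bZ/2[4j+1]$, the composite $B\Aut(T, \ell) \xrightarrow{inc} \Omega^{\infty+2k-1}\mathrm{dR}$ is classified by a system of $\bZ/2$-cohomology classes on $B\Aut(T, \ell)$; the degree-zero class is precisely the de Rham invariant of $(T, \ell)$ and accounts for the basepoint path component. To trivialise the higher classes I would construct an explicit cobordism: choose a resolution of $T$ by a finitely generated free $\bZ$-module $F$ carrying a $(-1)^k$-symmetric bilinear pairing whose discriminant is $(T, \ell)$, thus realising $(T[-k], q_\ell)$ as the ``boundary'' of a lattice-supported Poincar{\'e} structure and yielding a cobordism in $\LL^s(\bZ)$.

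The hardest step is making this cobordism naturally compatible with $\Aut(T, \ell)$, since an individual resolution of $T$ is not equivariant. I expect this to follow by showing that the space of all Hermitian lattice resolutions of $(T, \ell)$ becomes contractible after passage to $\LL^s(\bZ)$ --- for instance via the Verdier-localising property of $\mathcal{L}$ applied to a localisation sequence for $\bZ \to \bQ$ (analogously to the metabolic fibre sequence in the proof of Lemma~\ref{lem:RanickiOr}), or via a direct cohomological computation using the generalised Eilenberg--MacLane structure of $\LL^s(\bZ)_{(2)}$ and the classical fact that invariants of lattice forms beyond the signature lie in the $\LL^s(\bR)$-summand already handled. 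Once that contractibility is in hand, the pointwise null-cobordisms assemble into the required homotopy of $inc$ to the constant map.
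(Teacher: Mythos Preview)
Your reduction to the $\mathrm{dR}$-summand via base-change to $\bR$ is correct and is morally what the paper does as well (it uses the map to $\LL^s(\bQ)$ in the localisation sequence rather than to $\LL^s(\bR)$, but the effect is the same). The problem is entirely in your treatment of the $\mathrm{dR}$-summand, where your proposal does not amount to an argument.

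Resolving $(T,\ell)$ by a lattice form is the standard proof that such objects are null-\emph{cobordant}, but as you yourself observe, it cannot be made $\Aut(T,\ell)$-equivariant: automorphisms of a linking form do not lift to automorphisms of a chosen lattice. The ``space of Hermitian lattice resolutions'' you invoke has no evident model whose contractibility (in any relevant sense) is easier than the theorem itself, and your appeal to a localisation sequence for $\bZ \to \bQ$ only reproduces the factorisation through $\partial : \Omega^{\infty+2k}\LL^s(\bF_p) \to \Omega^{\infty+2k-1}\LL^s(\bZ)$, which is not null for $p=2$. The final suggestion---that invariants of lattice forms beyond the signature already lie in the $\LL^s(\bR)$-summand---is not correct in any sense that helps here.

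The paper's route is substantively different. It first performs a sequence of \emph{functorial} algebraic surgeries, along the canonically defined subgroups $(p\cdot T)\cap {}_{p}T$, to reduce to the case where $T$ is an elementary abelian $p$-group; unlike your lattice resolution, these surgeries are $\Aut(T,\ell)$-equivariant by construction, and this is what replaces your missing ``equivariant null-cobordism''. In the elementary abelian case $inc$ factors through $\Omega^{\infty+2k}\LL^s(\bF_p) \xrightarrow{\partial_p} \Omega^{\infty+2k-1}\LL^s(\bZ)$. For $p$ odd one shows the induced map $\Sigma^{-1}\LL^s(\bF_p) \to \mathrm{dR}$ is null as a map of $\H\bZ_{(2)}$-modules, for degree reasons: the source is concentrated in degrees $\equiv -1 \bmod 4$ and the target in degrees $\equiv 1 \bmod 4$. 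For $p=2$ that argument fails, and one instead factors the first map through $\Omega^\infty\GW^s(\bF_2)$ and invokes Quillen's calculation that $\K(\bF_2)$ is $2$-adically discrete, hence so is $\Omega^\infty\GW^s(\bF_2) \simeq \Omega^\infty\K(\bF_2)^{hC_2}$. This Grothendieck--Witt input at $p=2$ is the genuinely non-formal step, and nothing in your proposal approaches it.
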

\begin{proof}
Firstly, there is a canonical decomposition $T = \bigoplus_{p \text{ prime}} T_p$ of $T$ into its Sylow subgroups, which is is orthogonal with respect to $\ell$. This gives a corresponding decomposition of $\mathrm{Aut}(T,\ell)$, meaning that we may assume that $T$ is a $p$-group.

Secondly, if $(T,\ell)$ is a nondegenerate $(-1)^{n}$-symmetric linking form on a $p$-group then consider the subgroup $L := (p \cdot T) \cap ([p](T))$ of $p$-divisible and $p$-torsion elements of $T$. The restriction of $\ell$ to $L$ vanishes, as if $x = p \cdot \bar{x}, y \in L$ then $\ell(x, y) = \ell(p \bar{x}, y) = \ell(\bar{x}, py)=0$ as $y$ is $p$-torsion. Thus we may do algebraic surgery to $(T[-n], q_\ell)$ along the map $L[-n] \to T[-n]$, the result of which is the Poincar{\'e} object associated to the linking form $(T', \ell')$ with
$$T' := \mathrm{ker}(\ell^\mathrm{ad} : T/L \to \mathrm{Hom}_\bZ(L, \bQ/\bZ))$$
and $\ell'$ the linking form induced by $\ell$. This is functorial, giving a map $B\mathrm{Aut}(T, \ell) \to B\mathrm{Aut}(T', \ell')$, and furthermore the algebraic surgery between these forms is functorial, giving a homotopy between the two maps into $\Omega^{\infty+2n-1} \LL^s(\bZ)$. If $L \neq 0$ then $T'$ is strictly smaller than $T$, so continuing in this way we may assume that $(T,\ell)$ is such that $L=0$, i.e.\ $T$ is an elementary abelian $p$-group.

When $(T,\ell)$ is such that $T$ is an elementary abelian $p$-group, $\ell$ takes values in the subgroup of $\bQ/\bZ$ generated by $\tfrac{1}{p}$, and identifying this subgroup with $\bF_p$ we may consider $\ell : T \otimes T \to \bF_p$ as a nondegenerate $(-1)^n$-symmetric form on an $\bF_p$-module. Thus $\mathrm{Aut}(T, \ell)$ is a symplectic or orthogonal group over $\bF_p$, and there is a factorisation
$$inc : B\mathrm{Aut}(T, \ell) \lra \Omega^{\infty+2n} \LL^s(\bF_p) \overset{\partial_p}\lra \Omega^{\infty+2n-1} \LL^s(\bZ)$$
where the latter is the map arising in the localisation-d{\'e}vissage sequence
\begin{equation}\label{eq:locsequence}
\bigoplus_{p \text{ prime}} \Sigma^{-1} \LL^s(\bF_p) \overset{\oplus \partial_p}\lra \LL^s(\bZ) \lra \LL^s(\bQ).
\end{equation}

Thirdly, suppose that $p$ is odd. We will argue that $\partial_p : \Sigma^{-1} \LL^s(\bF_p) \to \LL^s(\bZ)$ is nullhomotopic. As $\partial_p$ has a canonical nullhomotopy when mapped to $\LL^s(\bQ)$ it also does when mapped to $\LL^s(\bR)$, giving a canonical lift $\partial_p' : \Sigma^{-1} \LL^s(\bF_p) \to \mathrm{dR}$ which we wish to show is nullhomotopic. 

The target of this map is 2-local, so we can localise all spectra involved at 2 and make use of the canonical $H\bZ_{(2)}$-module structure on $\LL^s(\bZ)_{(2)}$-modules discussed in Section \ref{sec:LThyBG}. We have already discussed how the terms in \eqref{eq:dRsequence} obtain $H\bZ_{(2)}$-module structures, in particular giving an equivalence $\mathrm{dR} \simeq \bigoplus_{i \in \bZ} \H \bZ/2 [4i+1]$ of $H\bZ_{(2)}$-modules. On the other hand the fibre sequence \eqref{eq:locsequence} endows $\Sigma^{-1} \LL^s(\bF_p)$ with a $\LL^s(\bZ)$-module structure (potentially different from that given by $\LL^s(\bZ) \to \LL^s(\bF_p)$, though experts tell me it is in fact not) making $\partial_p$ into an $\LL^s(\bZ)$-module map. As $p$ is odd we have
$$\pi_i(\LL^s(\bF_p)) = \begin{cases}
W(\bF_p) & i \equiv 0 \mod 4\\
0 & \text{else}
\end{cases}$$
where $W(F_p)$ is the Witt group of $\bF_p$ and is either $\bZ/4$ or $\bZ/2 \oplus \bZ/2$ \cite[Proposition 4.3.2]{RanickiPhoneBook}. In either case it follows that $\Sigma^{-1} \LL^s(\bF_p)$ is 2-local and with the $\H\bZ_{(2)}$-module structure given by \eqref{eq:locsequence} it is equivalent to $\bigoplus_{j \in \bZ} \H W(\bF_p)[4j-1]$. As the nullhomotopy of $\partial_p$ into $\LL^s(\bR)$ can be taken to be one of $\LL^s(\bZ)$-modules, via \eqref{eq:locsequence}, the map $\partial_p' : \Sigma^{-1} \LL^s(\bF_p) \to \mathrm{dR}$ is one of $\H\bZ_{(2)}$-modules with the module structures just described. But
$$0=\left[\bigoplus_{j \in \bZ} \H W(\bF_p)[4j-1], \, \bigoplus_{i \in \bZ} \H \bZ/2 [4i+1]\right]_{\H \bZ_{(2)}\text{-}mod}.$$
 so $\partial'_p$ is null.

Fourthly, suppose that $p=2$. In this case the map $\partial_2'$ is not trivial, and we instead argue that the map $B\mathrm{Aut}(T, \ell) \to \Omega^{\infty+2n} \LL^s(\bF_2)$ is homotopic to a constant map, so that its composition with $\Omega^{\infty+2n} \LL^s(\bF_2) \to \Omega^{\infty+2n-1} \mathrm{dR}$ is too. As we start with actual automorphisms of a symmetric form over $\bF_2$, this map factors through the Grothendieck--Witt space $\Omega^\infty \mathrm{GW}(\bF_2 ; \Qoppa)$, via the map
\begin{equation}\label{eq:GWmap}
\Omega^\infty \mathrm{GW}(\bF_2 ; \Qoppa) \lra \Omega^\infty \LL(\bF_2 ; \Qoppa) = \Omega^\infty \LL^s(\bF_2) = \Omega^{\infty+2n} \LL^s(\bF_2),
\end{equation}
where the latter identification holds because $\LL^s(\bF_2)$ is 2-periodic. (Here $\Qoppa = \Qoppa^s$ is the symmetric structure.) By \cite[Proposition 3.1.4]{No9III} the map
$$\mathrm{GW}(\bF_2 ; \Qoppa) \lra \K(\bF_2 ; \Qoppa)^{hC_2}$$
is an equivalence but by Quillen's calculation \cite{QuillenFF} of the $K$-theory of finite fields, the truncation map $ \K(\bF_2 ; \Qoppa) \to \H \bZ$ is a 2-local equivalence, so $\Omega^\infty \mathrm{GW}(\bF_2 ; \Qoppa)$ is 2-locally equivalent to the discrete space $\bZ$. As the target of \eqref{eq:GWmap} is 2-local, it follows that \eqref{eq:GWmap} is homotopic to a constant map as required.
\end{proof}

\begin{corollary}\label{cor:OddFSIsTrivial}
Let $\pi: E \to |K|$ be an oriented topological block bundle with $d$-dimensional fibre, and $d$ odd. Then the composition
$$|K| \overset{\alpha}\lra \Omega^{\infty+d} \MSTop \overset{\Omega^{\infty+d}\sigma}\lra \Omega^{\infty+d} \LL^s(\bZ)$$
is homotopic to a constant map.
\end{corollary}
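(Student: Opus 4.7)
The plan is to combine the two main results that immediately precede the corollary. By Theorem \ref{thm:FSTLThy}, for an oriented topological block bundle $\pi : E \to |K|$ with odd fibre dimension $d = 2n+1$, the composition
$$|K| \overset{\alpha}\lra \Omega^{\infty+d} \MSTop \overset{\Omega^{\infty+d}\sigma}\lra \Omega^{\infty+d} \LL^s(\bZ)$$
is homotopic to $inc \circ \phi$, where $\phi : |K| \to B\mathrm{Aut}(tors\, H^{n+1}(F;\bZ), \ell)$ classifies the monodromy action on the torsion of the middle cohomology of the fibre, equipped with its linking form $\ell$, and $inc$ is the map \eqref{eq:incMap} with $k = n+1$ (so $2k-1 = d$).

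Then Theorem \ref{thm:LinkingTriv} directly asserts that $inc$ is homotopic to a constant map, hence so is the composition $inc \circ \phi$.

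There is no further obstacle: this is purely a matter of stringing together Theorem \ref{thm:FSTLThy} (which identifies the geometric family signature with the algebraic linking form construction) and Theorem \ref{thm:LinkingTriv} (the nontrivial statement, whose proof reduces to $p$-primary analysis, algebraic surgery on $p$-divisible radicals, and separate arguments at odd primes versus $p=2$ using Grothendieck--Witt theory of $\bF_2$). The only mild subtlety worth noting in the write-up is the footnote attached to Theorem \ref{thm:LinkingTriv}: the map is homotopic to \emph{a} constant but not necessarily the basepoint, since when $d \equiv 1 \pmod 4$ it may land in the component detected by the de Rham invariant of the typical fibre. This is consistent with the statement of the corollary, which only claims homotopy to \emph{a} constant map.
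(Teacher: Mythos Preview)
Your proposal is correct and matches the paper's intended argument exactly: the corollary is stated immediately after Theorem \ref{thm:LinkingTriv} with no proof given, precisely because it is the concatenation of Theorem \ref{thm:FSTLThy} (identifying $\Omega^{\infty+d}\sigma \circ \alpha \simeq inc \circ \phi$ in the odd case) with Theorem \ref{thm:LinkingTriv} (showing $inc$ is homotopic to a constant). Your remark about the footnote is also apt and worth including.
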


\subsection{Even dimensions}\label{sec:AddendaEven}

With the choice of equivalence $\LL^s(\bR)[\tfrac{1}{2}] \simeq \KO[\tfrac{1}{2}]$ explained in Section \ref{sec:LThyBG}, the composition
$$B\Aut(H^n(F;\bR), \lambda) \overset{inc}\lra \Omega^{\infty+2n} \LL^s(\bR) \lra \Omega^{\infty+2n} \LL^s(\bR)[\tfrac{1}{2}] \simeq \Omega^{\infty+2n} \KO[\tfrac{1}{2}]$$
agrees (up to phantom maps) with the map $\sign$ constructed in Section \ref{sec:TwistedSignaturesKO}, because it has the same interpretation in terms of signatures on $\bZ/k$-bordism.

The new information in Theorem \ref{thm:FSTLThy} is therefore at the prime $2$. By the discussion in Section \ref{sec:LThyBG}, there is a splitting $\LL^s(\bZ) \simeq \LL^s(\bR) \oplus \mathrm{dR}$ induced by the composition $\mathrm{dR} \to \LL^s(\bZ) \overset{r}\to \bigoplus_{j \in \bZ} \H \bZ/2[4j+1]$ being an equivalence, and this induces an equivalence $\LL^s(\bR)_{(2)} \simeq \bigoplus_{i \in \bZ} \H \bZ_{(2)}[4i]$ such that the composition
$$\MSTop \overset{\sigma}\lra \LL^s(\bZ) \lra \LL^s(\bR)_{(2)} \simeq \bigoplus_{i \in \bZ} \H \bZ_{(2)}[4i]$$
corresponds under the Thom isomorphism to Morgan and Sullivan's class $\mathscr{L}$. The following theorem is the analogue of Theorem \ref{thm:KOSigFormula}, and stating it requires a preparatory lemma.

\begin{lemma}\label{lem:DefPHTilde}
There are unique classes $\widetilde{\mathrm{ph}}_i \in H^{4i}(BO;\bZ_{(2)})$ rationalising to $2^{2i} \mathrm{ph}_i$ and for $i>0$ reducing to zero modulo 2. 

Furthermore, for $i>0$ these classes are in fact divisible by 4.
\end{lemma}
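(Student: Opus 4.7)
The plan is to write $\widetilde{\mathrm{ph}}_i$ explicitly as a polynomial in the Pontryagin classes, deduce existence and divisibility by $4$ from a single $2$-adic valuation estimate, and settle uniqueness via the classical structure of the torsion in $H^*(BO;\bZ)$.

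First I would observe that, by the splitting principle, $V \otimes \bC$ has Chern roots occurring in pairs $\pm x_j$, so $p_k(V) = \sigma_k(x_1^2, x_2^2, \ldots)$ and $\mathrm{ph}(V) = \mathrm{ch}(V \otimes \bC) = \sum_j 2\cosh(x_j)$. Expanding in power sums of the Pontryagin roots $y_j := x_j^2$ and applying Newton's identities yields the rational formula
$$\mathrm{ph}_i = \frac{2}{(2i)!}\,\bar{p}_i(p_1, \ldots, p_i) \in H^{4i}(BO;\bQ),$$
where $\bar{p}_i$ is the integral polynomial expressing the $i$th power sum in elementary symmetric polynomials. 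I would then define
$$\widetilde{\mathrm{ph}}_i := \frac{2^{2i+1}}{(2i)!}\,\bar{p}_i(p_1, \ldots, p_i),$$
viewed as an element of the polynomial subring $\bZ_{(2)}[p_1, p_2, \ldots] \subset H^{4i}(BO;\bZ_{(2)})$. By Legendre's formula $v_2((2i)!) = 2i - s_2(2i) = 2i - s_2(i)$, and hence $v_2(2^{2i+1}/(2i)!) = 1 + s_2(i) \geq 2$ for $i \geq 1$. Thus $\widetilde{\mathrm{ph}}_i$ in fact lies in $4\cdot\bZ_{(2)}[p_1, p_2, \ldots]$, which simultaneously verifies $2$-integrality, the mod-$2$ vanishing, the prescribed rationalisation (by construction), and the ``furthermore'' divisibility-by-$4$ claim.

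For uniqueness, suppose $\alpha, \alpha' \in H^{4i}(BO;\bZ_{(2)})$ both satisfy the two conditions; the difference $\delta := \alpha - \alpha'$ is torsion (it rationalises to $0$) and divisible by $2$ (it reduces to $0$ modulo $2$), so write $\delta = 2\gamma$. Here I would invoke the classical result of E.\,H.\ Brown that the torsion subgroup of $H^*(BO;\bZ)$ has exponent $2$, yielding an additive decomposition $H^{4i}(BO;\bZ_{(2)}) \cong \bZ_{(2)}[p_1, p_2, \ldots]^{4i} \oplus T^{4i}$ with $T^{4i}$ annihilated by $2$. From $4\gamma = 2\delta = 0$ and the torsion-freeness of the polynomial summand, the polynomial component of $\gamma$ vanishes, so $\gamma \in T^{4i}$ has $2\gamma = 0$, whence $\delta = 0$.

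The only real obstacle is the $2$-adic valuation estimate $1 + s_2(i) \geq 2$, which is immediate because $s_2(i) \geq 1$ for $i \geq 1$; everything else amounts to bookkeeping with well-known structural facts about $H^*(BO)$.
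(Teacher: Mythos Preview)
Your proof is correct, and the uniqueness argument is essentially the same as the paper's (both use that the torsion in $H^*(BO;\bZ)$ has exponent~2). The main difference is in the divisibility-by-4 step. The paper defines $\widetilde{\mathrm{ph}}_i$ by pulling back $\widetilde{\mathrm{ch}}_{2i} = 2^{2i}\mathrm{ch}_{2i}$ along $BO \to BU$, and for the ``furthermore'' clause it refers back to the $n$ even case of Corollary~\ref{cor:SigMultMeyer}: that argument exploits the fact that odd Chern classes of a complexified real bundle are 2-torsion, together with the Frobenius-type congruence $\bar p_{2i} \equiv \bar p_s^{\,2^r} \pmod 2$ (writing $2i = 2^r s$ with $s$ odd), to squeeze out the extra factor of~2. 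Your route is more direct: by expressing $2^{2i}\mathrm{ph}_i$ as $\tfrac{2^{2i+1}}{(2i)!}\,\bar p_i(p_1,\ldots,p_i)$ in Pontryagin classes, the single Legendre estimate $v_2\bigl(2^{2i+1}/(2i)!\bigr) = 1 + s_2(i) \geq 2$ delivers existence, mod-2 vanishing, and divisibility by 4 all at once. The paper's approach has the advantage of reusing machinery already set up earlier; yours is self-contained and shorter, and in fact gives the sharper bound $v_2(\widetilde{\mathrm{ph}}_i) \geq 1 + s_2(i)$.
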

\begin{proof}
As the integral cohomology of $BO$ only has $\bZ/2$-torsion the uniqueness is clear. For existence, as in Corollary \ref{cor:SigMultMeyer} we use that $2^j/j!$ is 2-integral and even 2-integrally divisible by 2 so that $\widetilde{\mathrm{ch}}_j = 2^j \mathrm{ch}_j$ is a 2-integral cohomology class on $BU$ which reduces to zero modulo 2. Pulling $\widetilde{\mathrm{ch}}_{2i}$ back to $BO$ gives the required classes $\mathrm{ph}_i$. The second part follows identically to the case $n$ even of the proof of Corollary \ref{cor:SigMultMeyer}.
\end{proof}

To state the following, we use the real form $\xi_\bR$ of $\xi$ from Remark \ref{rem:RealXi}..

\begin{theorem}\label{thm:2LocalPontrjaginFormula}
If $(H_\bR, \lambda)$ is a nondegenerate $(-1)^n$-symmetric bilinear form then the square
\begin{equation*}
\begin{tikzcd}
B\Aut(H_\bR, \lambda) \arrow[rr, "inc"] \dar{\xi_\bR} && \Omega^{\infty+1-(-1)^n} \LL^s(\bR)_{(2)} \dar{\simeq}   \\
\Omega^{\infty + 1 - (-1)^n} \KO \arrow[rr, "\Omega^{1-(-1)^n}\widetilde{\mathrm{ph}}"] & & \Omega^{\infty+1 - (-1)^n} \left(\bigoplus_{i \in \bZ} \H \bZ_{(2)}[4i] \right)
\end{tikzcd}
\end{equation*}
commutes up to homotopy and phantom maps.
\end{theorem}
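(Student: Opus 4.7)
The target $\Omega^{\infty+1-(-1)^n}\LL^s(\bR)_{(2)}$ splits 2-locally, via the canonical $\H\bZ_{(2)}$-module equivalence from Section \ref{sec:LThyBG}, as the product $\prod_i \Omega^{\infty+1-(-1)^n-4i} \H\bZ_{(2)}$, so the commutativity up to phantoms reduces to verifying, for each $i \in \bZ$, an identity of 2-local integral cohomology classes on $B\Aut(H_\bR, \lambda)$: the $i$-th component $\alpha_i$ of $inc$ composed with the splitting should equal the corresponding component $\beta_i := \xi_\bR^*\widetilde{\mathrm{ph}}_i$, in the appropriate degree $4i - (1-(-1)^n)$.

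The first step is to verify $\alpha_i = \beta_i$ after rationalisation. After inverting $2$, the HLN splitting becomes the Pontrjagin character, and via the equivalence $\LL^s(\bR)[\tfrac{1}{2}] \simeq \KO[\tfrac{1}{2}]$ of Section \ref{sec:LThyBG} the square becomes the image under $\mathrm{ph}$ of the square in Theorem \ref{thm:KOSigFormula}. By that theorem the rationalisation of $inc$ is $\mathrm{ph}(\tfrac{1}{2}r(b^n\psi^2\xi)) = \mathrm{ch}(\psi^2\xi)$. Expressing $\xi$ in terms of $\xi_\bR$ using Remark \ref{rem:RealXi}, together with $\psi^2 b = 2b$ and the compatibility of Adams operations with $c$ and $r$, this reorganises as $\sum_i 2^{2i}\mathrm{ph}_i(\xi_\bR)$. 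By Lemma \ref{lem:DefPHTilde}, $\widetilde{\mathrm{ph}}_i(\xi_\bR) \otimes \bQ = 2^{2i}\mathrm{ph}_i(\xi_\bR)$, so $\alpha_i$ and $\beta_i$ agree rationally.

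The second step, upgrading to $\bZ_{(2)}$-integral equality up to phantoms, is the crux, and I propose to handle it by an analogue of Sullivan's construction from Section \ref{sec:CobCoeffKthy}. Just as Sullivan characterised a $\KO[\tfrac{1}{2}]$-class on a space $X$ by its values on $\MSO_{4*}(X)$ and $\MSO_{4*}(X; \bZ/k)$ subject to the product formula \eqref{eq:Product}, there is a 2-local analogue for $\LL^s(\bR)_{(2)}$-valued classes. This analogue is underwritten by the Hebestreit-Land-Nikolaus $\H\bZ_{(2)}$-algebra map $\H\bZ_{(2)} \to \MSO_{(2)}$ and the resulting uniqueness of $\H\bZ_{(2)}$-module splittings, which together with the Landweber exact functor theorem gives the required bordism characterisation via $\bZ/2^k$-coefficients. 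Both $\alpha_i$ and $\beta_i$ then have matching bordism data: on a $\bZ/2^k$-manifold $f: W^{4i-(1-(-1)^n)}\to B\Aut(H_\bR,\lambda)$, $\alpha_i$ evaluates modulo $2^k$ to the twisted signature $\sigma(W; f)$ by the construction of $inc$ via Theorem \ref{thm:FSTLThy}, while $\beta_i$ evaluates to the same number by the $\bZ/k$-index theorem (Freed-Melrose, Rosenberg) applied to Meyer's twisted signature operator on the possibly non-flat bundle determined by $\xi_\bR$, exactly as in the proof of Theorem \ref{thm:KOSigFormula}.

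The main obstacle is formulating this Sullivan-style characterisation for $\LL^s(\bR)_{(2)}$-valued maps precisely, so that the rational data together with the $\bZ/2^k$-bordism values suffice, up to phantoms, to determine the map. Once this is set up, each individual verification is an analogue of the proof of Theorem \ref{thm:KOSigFormula} with $2$ kept as a good prime rather than inverted, and the index-theoretic heart of the argument is the same.
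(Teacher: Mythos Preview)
Your rational step is fine and essentially matches the paper. The gap is in the second step, where you propose a Sullivan-style characterisation of $\LL^s(\bR)_{(2)}$-valued maps via $\bZ/2^k$-bordism and twisted signatures. This is the obstacle you yourself flag, and it is a real one: Sullivan's argument in Section~\ref{sec:CobCoeffKthy} rests on the Conner--Floyd/Landweber isomorphism $MSO_*(-)\otimes_{MSO_*}\bZ[\tfrac12][a^{\pm1}]\overset{\sim}\to KO[\tfrac12]_*(-)$, which has no analogue at $2$. The $\H\bZ_{(2)}$-module structure on $\MSO_{(2)}$ tells you that $\LL^s(\bR)_{(2)}$ is a generalised Eilenberg--MacLane spectrum, but it does not let you recover $\bZ_{(2)}$-cohomology classes from signature data on $\bZ/2^k$-manifolds; $2$-locally there are genuinely new bordism invariants (Stiefel--Whitney numbers, the de Rham invariant, \ldots) not captured by any signature, so the product formula \eqref{eq:Product} alone cannot pin down a class. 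Relatedly, the Atiyah--Segal argument in the Claim inside the proof of Theorem~\ref{thm:KOSigFormula}, which let rationalisation suffice, used that $2$ was inverted to handle the disconnected group $O(p)\times O(q)$ and has no $2$-complete counterpart.

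The paper instead fractures into the rational and $2$-complete pieces. For the $2$-complete piece it first stabilises to a hyperbolic form, then invokes Karoubi's theorem (the stable Milnor conjecture for $Sp_{2g}(\bR)$ and $O_{g,g}(\bR)$) to identify $H^*(B\Aut(H_\bR,\lambda);\bZ_2)$ with $H^*(B\Aut(H_\bR,\lambda)^{top};\bZ_2)$ in a stable range. For $n$ odd this cohomology is torsion-free, so injects into $\bQ_2$-cohomology and the rational verification already settles it. For $n$ even one must additionally check agreement modulo $2$; since $\widetilde{\mathrm{ph}}_i \equiv 0 \pmod 2$ for $i>0$, this amounts to showing the $inc$ side vanishes mod $2$, which the paper does by reducing to the diagonal $\{\pm1\}^g\times\{\pm1\}^g$ and then computing explicitly on the $\mathbb{CP}^2$-bundle $EC_2\times_{C_2}\mathbb{CP}^2\to BC_2$ via the family signature theorem. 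This geometric input, and the use of Karoubi's theorem, are the ideas your proposal is missing.
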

\begin{proof}
We can verify this after inverting 2, and after completing at 2. After inverting 2 we are working rationally, and the equivalence
\begin{equation}\label{eq:RatEq}
\KO_{(0)} \simeq \LL^s(\bR)_{(0)} \simeq \bigoplus_{i \in \bZ} \H \bQ[4i]
\end{equation}
obtained by further localising the equivalences $\LL^s(\bR)[\tfrac{1}{2}] \simeq \KO[\tfrac{1}{2}]$ and $\LL^s(\bR)_{(2)} \simeq \bigoplus_{i \in \bZ} \H \bZ_{(2)}[4i]$ sends $a^k \in \pi_{4k}(\KO)_{(0)}$ to $1 \in \pi_{4k}(\bigoplus_{i \in \bZ} \H \bQ[4i])$. As $a$ was chosen to map to the square of the Bott class $b^2 \in \pi_4(\K)$ under complexification, and the Chern character takes the value 1 on $b$, it follows that the equivalence \eqref{eq:RatEq} is induced by the Pontrjagin character. By Theorem \ref{thm:KOSigFormula} the clockwise composition is then rationally $\mathrm{ph}(\tfrac{1}{2}r(b^n \psi^2 \xi))$ which is calculated in the proof of that theorem to be $\mathrm{ch}(\psi^2 \xi)$ which is the same as $\widetilde{\mathrm{ph}}(\xi_\bR)$.

To deal with the 2-complete case we use the fact that, up to a translation of path-components, the maps $inc$ and $\xi_\bR$ commute with the map $B\Aut(H_\bR, \lambda) \to B\Aut(H_\bR \oplus H'_\bR, \lambda \oplus \lambda')$ given by stabilising by a form $(H'_\bR, \lambda')$. As the square does commute at the level of $\pi_0$ by the previous case, by choosing $(H'_\bR, \lambda')$ to have large rank (and opposite signature to $(H_\bR, \lambda)$ if $n$ is even) this allows us to assume that $(H_\bR, \lambda)$ is a hyperbolic form of arbitrarily large rank. Then, by a version \cite[p.\ 260]{KaroubiRig} of the stable Milnor conjecture for the groups $\mathrm{Sp}_{2g}(\bR)$ or\footnote{Karoubi writes $\mathrm{O}_{2n}(\bR)$ but this seems to mean what we call $\mathrm{O}_{n,n}(\bR)$, the automorphism group of the hyperbolic form.} $\mathrm{O}_{g,g}(\bR)$ the map
$$B\Aut(H_\bR, \lambda) \lra B\Aut(H_\bR, \lambda)^{top} \simeq \begin{cases}
BU(g) & n \text{ odd}\\
BO(g) \times BO(g) & n \text{ even}
\end{cases}$$
induces an isomorphism on cohomology with all finite coefficients in a range of cohomological degrees tending to $\infty$ with $g$. It therefore also induces an isomorphism with 2-adic coefficients in such a range. 

If $n$ is odd it follows that in each degree in the stable range $H^*(B\Aut(H_\bR, \lambda) ; \bZ_2)$ is a finitely-generated free $\bZ_2$-module, so by the Bockstein sequence the map
$$H^*(B\Aut(H_\bR, \lambda) ; \bZ_2) \lra H^*(B\Aut(H_\bR, \lambda) ; \bQ_2)$$
is injective. As the diagram in the statement of the theorem commutes over $\bQ$ it also does over $\bQ_2$, and by this injectivity it also does over $\bZ_2$. 

If $n$ is even it follows that in the stable range $H^*(B\Aut(H_\bR, \lambda) ; \bZ_2)$ is a sum of a finitely-generated free $\bZ_2$-module and a finite $\bZ/2$-module. Elements are therefore detected by their images with $\bQ_2$- and $\bZ/2$-coefficients. With $\bQ_2$-coefficients we proceed as above. With $\bZ/2$-coefficients the composition $\Omega^{1-(-1)^n} \widetilde{\mathrm{ph}} \circ \xi_\bR$ is trivial, as the classes $\widetilde{\mathrm{ph}}_i$ are trivial modulo 2. To show the other composition is trivial, we use that
$$\bZ/2[w_1, w_2, \ldots w_g, w'_1, \ldots, w_g] \overset{\sim}\to H^*(B\Aut(H_\bR, \lambda)^{top};\bZ/2) \to H^*(B\Aut(H_\bR, \lambda);\bZ/2)$$
is an isomorphism in the stable range, and that this is detected on the subgroup
$$\{\pm 1\}^g \times \{\pm 1\}^g \subset \Aut(H_\bR, \lambda)$$
given by acting by a sign in each basis vector, in some basis given by bases for a choice of positive and negative definite subspaces. As the map $inc$ is additive with respect to orthogonal sum of forms, this reduces us to considering
$$inc: BC_2 = B\Aut(\bR, (1)) \lra \Omega^{\infty+2n} \LL^s(\bR),$$
or its negative-definite analogue, which can be treated similarly. To evaluate this we can use the family signature theorem, because the complex conjugation involution on $\mathbb{CP}^2$ gives a splitting $BC_2 \to B\mathrm{Diff}^+(\mathbb{CP}^2) \overset{\phi} \to B\Aut(\bR, (1))$ and so we may apply Theorem \ref{thm:FSTLThy} to say that the map in question is homotopic to
$$BC_2 \to \Omega^\infty \mathrm{Th}(-T_\pi E \to E) \to \Omega^{\infty+4} \MSTop \overset{\sigma}\lra \Omega^{\infty+4} \LL^s(\bZ) \overset{L}\lra \Omega^{\infty+4} \left(\bigoplus_{i \in \bZ} \H \bZ_{(2)}[4i] \right),$$
where the first map is the parameterised Pontrjagin--Thom map for the smooth oriented fibre bundle $\pi: E:= EC_2 \times_{C_2} \mathbb{CP}^2 \to BC_2$, and the second is given by Thomifying the map $E \to BSTop$ classifying $-T_\pi E$. As the reduction of $\sigma^*(L)$ modulo 2 corresponds under the Thom isomorphism to the square of the total Wu class, as a spectrum cohomology class on $\mathrm{Th}(-T_\pi E \to E)$ it corresponds to $u_{-T_\pi E} \cdot V(-T_\pi E)^2$. This satisfies
\begin{align*}
\Sq(u_{-T_\pi E} \cdot V(-T_\pi E)^2) &= \Sq(u_{-T_\pi E}) \cdot (\Sq(V(-T_\pi E)))^2\\
 &= u_{-T_\pi E} \cdot w(-T_\pi E) \cdot (\tfrac{1}{w(-T_\pi E)})^2\\
 &= u_{-T_\pi E} \cdot w(T_\pi E).
\end{align*}
This cohomology class is supported in degrees $\leq 0$, so when pulled back to $\Sigma^\infty_+ BC_2$ along the adjoint of the parameterised Pontrjagin--Thom map it only has a component of degree 0 (namely, the characteristic number $\int_{\mathbb{CP}^2} w_4(T\mathbb{CP}^2) = 1$). But as $\Sq$ is invertible, the pullback of $u_{-T_\pi E} \cdot V(-T_\pi E)^2$ to $\Sigma^\infty_+ BC_2$ must also be trivial in positive degrees, as required.
\end{proof}

The remaining information at the prime 2 concerns the de Rham invariant map $r : \LL^s(\bZ) \to \bigoplus_{j \in \bZ} \H\bZ/2[4j+1]$. We have already mentioned that $r \circ \sigma$ corresponds, under the Thom isomorphism, with the class $\sum_{i \geq 0} V_{2i} \cdot \Sq^1 V_{2i}$. Theorem \ref{thm:FateofdR} below determines the corresponding cohomology classes on the intersection form side. During the proof we will explain that $H^3(BSp_\infty(\bZ);\bZ/2) = \bZ/2$ and the unique nontrivial element $r_3$ of this group provides, by stabilising, a characteristic class for all local systems of skew-symmetric lattices. Relatedly, we will explain why the function $r_1 : \Aut(H_\bZ, \lambda) \to \bZ/2$, which assigns to an automorphism $\phi$ the number $\dim_{\bZ/2} (\bZ/2 \otimes tors\tfrac{H_\bZ}{(\mathrm{Id}-\phi)H_\bZ})$ modulo 2, is a homomorphism.

\begin{theorem}\label{thm:FateofdR}
If $n$ is even so $(H_\bZ, \lambda)$ is symmetric then 
$$B\Aut(H_\bZ, \lambda) \overset{inc}\lra \Omega^{\infty+2n}\LL^s(\bZ) \xrightarrow{\Omega^{\infty+2n} r} \Omega^{\infty+2n} \left(\bigoplus_{j \in \bZ} \H\bZ/2[4j+1] \right)$$
is the cohomology class given by $r_1$ in degree 1, and is trivial in all other degrees.

If $n$ is odd so $(H_\bZ, \lambda)$ is skew-symmetric then 
$$B\Aut(H_\bZ, \lambda) \overset{inc}\lra \Omega^{\infty+2n}\LL^s(\bZ) \xrightarrow{\Omega^{\infty+2n} r} \Omega^{\infty+2n} \left(\bigoplus_{j \in \bZ} \H\bZ/2[4j+1] \right)$$
is the cohomology class $r_3$ in degree 3, and is trivial in all other degrees.
\end{theorem}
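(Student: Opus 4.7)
The cohomological degrees on $B\Aut(H_\bZ,\lambda)$ where $r\circ inc$ can contribute are $\{4j+1-2n:j\in\bZ,\,4j+1\geq 2n\}$, i.e.\ $\{1,5,9,\dots\}$ for $n$ even and $\{3,7,11,\dots\}$ for $n$ odd; the theorem asserts non-vanishing only in the lowest such degree. The proof therefore splits naturally into identifying this lowest-degree class and showing vanishing in all higher degrees.

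For the degree-1 identification in the symmetric case, I would evaluate $r\circ inc$ on an automorphism $\phi\in\Aut(H_\bZ,\lambda)$, regarded as a loop $S^1\to B\Aut(H_\bZ,\lambda)$. Its image in $\pi_{2n+1}\LL^s(\bZ)=\bZ/2$ corresponds to the algebraic mapping-torus Poincar{\'e} complex $T_\phi=\mathrm{cofib}(H_\bZ[-n]\xrightarrow{\mathrm{id}-\phi} H_\bZ[-n])$ with the symmetric Poincar{\'e} structure induced by $\lambda$. Applying the odd-dimensional algebraic surgery of Section \ref{sec:SurgeryBelowMid} reduces $T_\phi$ to the induced $(-1)^{n+1}$-symmetric linking form on $\mathrm{tors}(H_\bZ/(\mathrm{id}-\phi)H_\bZ)$. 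One then verifies, either by a direct chain-level calculation or by tracking through the $\bF_2$-reduction step used in the proof of Theorem \ref{thm:LinkingTriv}, that the de Rham invariant of any linking form $(T,\ell)$ equals $\dim_{\bF_2}(\bF_2\otimes T)\pmod 2$. This matches $r_1(\phi)$ and simultaneously confirms that $r_1$ is a homomorphism. For the skew-symmetric case, since $H^3(BSp_\infty(\bZ);\bZ/2)=\bZ/2$ is one-dimensional it suffices to establish non-vanishing of the degree-$3$ class; this can be done by exhibiting an explicit $3$-cycle in $BSp_{2g}(\bZ)$ (for $g$ large) realising a generator of $H_3$ and computing its image in $\pi_{2n+3}\LL^s(\bZ)=\bZ/2$ via the $3$-dimensional family analogue of the mapping-torus construction.

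Vanishing in all higher degrees is the main obstacle. Because $B\Aut(H_\bZ,\lambda)$ is a $K(\Aut,1)$, higher-degree classes are group-cohomological and cannot be tested on spheres, so one must bound them at the level of the cohomology ring. My strategy would be to appeal to Theorem \ref{thm:LinkingTriv} via a universal-example argument: any cohomology class in degree $d>1$ (resp.\ $d>3$) is represented by pulling back along some map into $B\Aut(H_\bZ,\lambda)$ realised (after stabilisation) by a Poincar{\'e} mock bundle over a suitable base $X$; applying the odd-dimensional algebraic surgery of Section \ref{sec:SurgeryBelowMid} in families reduces the dR-component of $r\circ inc$ to a family of linking forms over $X$, whose class in $\LL^s(\bZ)$ is forced by Theorem \ref{thm:LinkingTriv} to be locally constant (i.e.\ to lie in the degree-$0$ summand). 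Consequently all positive-degree components other than the lowest vanish. The key delicate step, and the main source of technical difficulty, is carrying out the family surgery coherently over $X$ when the ranks of the torsion subgroups $\mathrm{tors}(H/(\mathrm{id}-\phi)H)$ can vary with the parameter; this is where one must invest most care, for example by a careful use of the $t$-structure arguments of Section \ref{sec:SurgeryBelowMid} on the universal family.
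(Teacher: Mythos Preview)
Your identification of the degree-$1$ class in the symmetric case via the algebraic mapping torus is exactly what the paper does (Lemma \ref{lem:LowestClassEven}), so that part is fine.

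However, your proposed vanishing argument has a genuine gap. The object sitting over $B\Aut(H_\bZ,\lambda)$ under $inc$ is the local system $\mathcal{H}[-n]$ with its $(-1)^n$-symmetric form: this is already a Poincar{\'e} object of \emph{even} degree $2n$, concentrated in a single degree. There is no further ``odd-dimensional surgery in families'' to perform on it, and Theorem \ref{thm:LinkingTriv} (which concerns linking forms, i.e.\ \emph{odd}-dimensional Poincar{\'e} objects) has no purchase here. Your suggestion of ``pulling back along a Poincar{\'e} mock bundle over $X$'' and then surgering does not detect $H^*(B\Aut(H_\bZ,\lambda);\bZ/2)$: cohomology of a $K(G,1)$ is not probed by bordism classes of manifolds mapping in, and even if you could form a $(2n+\dim X)$-dimensional total object and surger it, you would be computing a single number in $\pi_*\LL^s(\bZ)$, not a cohomology class on $X$. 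The difficulty you flag at the end (torsion ranks jumping) is a symptom of the approach not being well-posed, not merely a technical wrinkle.

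The paper's route is entirely different and supplies the missing idea: one observes that $inc$ factors through the Grothendieck--Witt space, so the composite $r\circ inc$ is the restriction of an \emph{infinite loop map} $\Omega^\infty_0\GW^s(\bZ;(-1)^n)\to\prod_j K(\bZ/2,4j+1-2n)$. Such maps are constrained by the \emph{spectrum} cohomology $H^*(\tau_{>0}\GW^s(\bZ;(-1)^n);\bZ/2)$ via the cohomology suspension. Using the Berrick--Karoubi $2$-adic cartesian squares and explicit Steenrod-module presentations of $H^*(bo)$, $H^*(bu)$, etc., the paper shows (Lemma \ref{lem:CohIsSmall}) that the image of the cohomology suspension in degrees $\equiv 1-2n\pmod 4$ is trivial except in degree $1$ (resp.\ $3$). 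This is a ``supply-side'' constraint---there simply are no infinite-loop classes in the higher degrees---rather than a detection argument. For the nontriviality of the degree-$3$ class in the skew case the paper again works at the spectrum level (Lemma \ref{lem:LowestClassOdd}), analysing the fibre sequence $\tau_{>0}((S^{2\sigma-2}\otimes\K(\bZ))_{hC_2})\to\tau_{>0}\GW^s(\bZ;-)\to\tau_{>0}\Sigma^{-2n}\LL^s(\bZ)$; your proposed explicit $3$-cycle computation would require independently identifying a generator of $H_3(BSp_\infty(\bZ);\bZ/2)$ and evaluating a $3$-parameter $L$-theory invariant on it, which is substantially harder.
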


To begin the proof of this theorem, the map $inc$ tautologically factors over ($\Omega^\infty$ of) the map 
$$\mathrm{bord} : \GW^{s}(\bZ; (-1)^n) \lra \LL^{s}(\bZ; (-1)^n)$$
so it suffices to analyse the map
$$\Omega^\infty_0 \GW^{s}(\bZ; (-1)^n) \xrightarrow{\Omega^\infty \mathrm{bord}} \Omega^{\infty+2n}_0 \LL^s(\bZ) \xrightarrow{\Omega^{\infty+2n} r} \Omega^{\infty+2n} \left(\bigoplus_{j \in \bZ} \H\bZ/2[4j+1] \right),$$
which is an infinite loop map. The following lemma shows that there are not so many such maps which refine to infinite loop maps. 

\begin{lemma}\label{lem:CohIsSmall}
The cohomology suspension map
$$H^*(\tau_{>0} \GW^{s}(\bZ; (-1)^n);\bZ/2) \lra H^*(\Omega^\infty_0 \GW^{s}(\bZ; (-1)^n);\bZ/2)$$
has trivial image in degrees congruent to $1 - 2n \mod 4$, except for degree 1 if $n$ is even or degree 3 if $n$ is odd.
\end{lemma}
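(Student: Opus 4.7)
The plan is to bound the image of cohomology suspension in each degree $k \equiv 1 - 2n \mod 4$ by first bounding the mod-2 spectrum cohomology $H^k(\tau_{>0}\GW^{s}(\bZ;(-1)^n);\bZ/2)$, since the image of the suspension map is a quotient of this. To access the spectrum cohomology I would run the long exact sequence in mod-2 cohomology attached to the fundamental fibration of spectra
\[
\K(\bZ)_{hC_2} \xrightarrow{\mathrm{hyp}} \GW^{s}(\bZ;(-1)^n) \xrightarrow{\mathrm{bord}} \LL^{s}(\bZ;(-1)^n)
\]
from the nine-author framework of \cite{No9I, No9II, No9III}, with the $C_2$-action on $\K(\bZ)$ being the duality involution twisted by $(-1)^n$.

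The cohomology of the right-hand term is explicit: after identifying $\LL^{s}(\bZ;(-1)^n)$ with an appropriate suspension of $\LL^{s}(\bZ)$, the 2-local splitting $\LL^{s}(\bZ)_{(2)} \simeq \LL^{s}(\bR) \oplus \mathrm{dR}$ from Section \ref{sec:LThyBG} shows that the $\LL^{s}(\bR)$-factor is invisible in the residue class modulo 4 of interest (it lives in the ``signature'' residue class), while the $\mathrm{dR}$-factor contributes exactly one $\bZ/2$ in every degree $k \equiv 1-2n \mod 4$. For the left-hand term $\K(\bZ)_{hC_2}$, I would feed Quillen's and Borel's low-dimensional computations of $\pi_*\K(\bZ)$ into the homotopy-orbit spectral sequence to bound $H^*(\K(\bZ)_{hC_2};\bZ/2)$ in the few relevant low degrees.

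Combining these inputs through the long exact sequence then yields the desired upper bound on $H^*(\tau_{>0}\GW^{s}(\bZ;(-1)^n);\bZ/2)$ in the residue class modulo 4 under consideration. The main obstacle will be identifying the connecting homomorphism precisely enough to see that, in this residue class, all but one of the $\mathrm{dR}$-summands are hit from the K-theory side and so vanish in $\GW^{s}$, leaving a single surviving primitive class in degree 1 (for $n$ even) or degree 3 (for $n$ odd). The surviving class should match the classes $r_1$ or $r_3$ introduced just above Theorem \ref{thm:FateofdR}, and can be exhibited explicitly via the discriminant/spinor-norm (for $n$ even) or via the classical generator of $H^3(B\mathrm{Sp}_\infty(\bZ);\bZ/2)$ (for $n$ odd); the remaining higher-degree candidates in that residue class either do not exist as spectrum classes or fail to be primitive, and so suspend to zero.
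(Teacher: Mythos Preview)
Your approach has a genuine gap. You assert that ``the $\LL^s(\bR)$-factor is invisible in the residue class modulo $4$ of interest,'' but this confuses homotopy with cohomology: while $\pi_*(\LL^s(\bR))$ is concentrated in degrees $\equiv 0\bmod 4$, the mod-$2$ spectrum cohomology of each summand $\H\bZ_{(2)}[4i]$ is a shifted copy of $\mathcal{A}/\mathcal{A}\Sq^1$, which is nonzero in \emph{every} sufficiently large degree, including those $\equiv 1-2n\bmod 4$. So the long exact sequence in cohomology for your fibre sequence does not simplify in the way you want. A second problem is that the lemma concerns \emph{all} degrees in the given residue class, whereas ``Quillen's and Borel's low-dimensional computations of $\pi_*\K(\bZ)$'' only give you control of $H^*(\K(\bZ)_{hC_2};\bZ/2)$ in a finite range; you would need the full $2$-adic homotopy type of $\K(\bZ)$ (i.e.\ Voevodsky/Quillen--Lichtenbaum). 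Finally, your endgame (``the remaining higher-degree candidates \ldots\ fail to be primitive'') is precisely the hard part: one has to identify the $\mathcal{A}$-module structure on $H^*(\tau_{>0}\GW^s(\bZ;(-1)^n);\bZ/2)$ well enough to compute its unstable quotient, and nothing in your outline provides this.

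The paper's proof avoids all of this by using a different fibre sequence. Via Berrick--Karoubi and Friedlander--Priddy one identifies (after $2$-completion)
\[
\tau_{>0}\GW^s(\bZ;+)\;\simeq\;\mathrm{fib}\bigl(bo\oplus bo\xrightarrow{(\psi^3-1)\circ(+)} bso\bigr),\qquad
\tau_{>0}\GW^s(\bZ;-)\;\simeq\;\mathrm{fib}\bigl(bu\to bsp\xrightarrow{\psi^3-1} bsp\bigr).
\]
The spectra $bo$, $bso$, $bu$, $bsp$ have \emph{cyclic} $\mathcal{A}$-module cohomology (by Stong), and one checks that $\psi^3-1$ is zero on mod-$2$ cohomology, so $H^*(\tau_{>0}\GW^s(\bZ;\pm);\bZ/2)$ sits in a short extension of explicit cyclic $\mathcal{A}$-modules with generators in degrees $1,1,1$ (symmetric case) or $2,3$ (skew case). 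One then computes the unstable quotient of each cyclic piece by listing admissible monomials of bounded excess, and reads off that the only classes in degrees $\equiv 1-2n\bmod 4$ are the bottom ones. This is what makes the argument go through in all degrees simultaneously.
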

\begin{proof}
Berrick and Karoubi \cite{BK} establish a 2-adically cartesian square
$$
\begin{tikzcd}
\tau_{> 0} \GW^{s}_{cl}(\bZ[\tfrac{1}{2}]; (-1)^n) \rar \dar& \tau_{> 0}\GW^{top}(\bR; (-1)^n) \dar\\
\tau_{> 0} \GW^{s}_{cl}(\bF_3; (-1)^n) \rar & \tau_{> 0}\GW^{top}(\bC; (-1)^n)
\end{tikzcd}
$$
where the left-hand terms are (the 0-connected covers of) the classical symmetric Grothendieck--Witt spectra. There is a zig-zag
$$ \GW^{s}_{cl}(\bZ[\tfrac{1}{2}]; (-1)^n) \lra \tau_{\geq 0} \GW^{s}(\bZ[\tfrac{1}{2}] ; (-1)^n) \longleftarrow \tau_{\geq 0} \GW^{s}(\bZ ; (-1)^n)$$
whose left-hand map is an equivalence by \cite[1.3.15]{No9III} (using \cite[Theorem A]{HS}), and whose right-hand map is a 2-adic equivalence by \cite[3.1.11]{No9III}. Combined with Theorems IV.2.4 and IV.5.4 of \cite{FP} this gives fibre sequences of (implicitly 2-completed) spectra
\begin{equation}\label{eq:GWQuillenSeq}
\begin{split}
&\tau_{> 0} \GW^{s}(\bZ; +) \lra bo \oplus bo \lra bso \\
&\tau_{> 0} \GW^{s}(\bZ; -) \lra bu \lra bsp
\end{split}
\end{equation}
where the right-hand maps are $bo \oplus bo \overset{+}\to bo \overset{\psi^3-1}\to bso$ and $bu \to bsp \overset{\psi^3-1}\to bsp$ respectively. These spectra are the deloopings of $BO$, $BSO$, $BU$ and $BSp$, and their $\bZ/2$-cohomology as left modules over the Steenrod algebra $\mathcal{A}$ follows from the calculations of \cite{StongBO}, as
\begin{align*}
H^*(bo) &= \mathcal{A}\{\iota_1\}/ (\Sq^2 \iota_1) &\quad\quad H^*(bso) &=  \mathcal{A}\{\iota_2\}/ (\Sq^3 \iota_2)\\
H^*(bu) &= \mathcal{A}\{\iota_2\}/ (\Sq^1 \iota_2, \Sq^3 \iota_2) &\quad\quad H^*(bsp) &= \mathcal{A}\{\iota_4\} / (\Sq^1 \iota_4, \Sq^5 \iota_4),
\end{align*}
where $\iota_r$ denotes a class of degree $r$, and here and in the rest of this proof all cohomology is taken with $\bZ/2$-coefficients.

The map $\psi^3-1 : bsp \to bsp$ induces multiplication by 8($=3^2-1$) on the lowest homotopy group $\pi_4(bsp) = \bZ_2$, so by the above it induces the zero map on $\bZ/2$-cohomology. 

The based map $BO(1) \overset{L-1}\to BO \overset{\psi^3-1}\to BSO$, where $L$ is the tautological real line bundle over $BO(1)$, is nullhomotopic as $\psi^3(L) = L^{\otimes 3} = L$. Thus the adjoint $\Sigma^\infty BO(1) \to bo \overset{\psi^3-1}\to bso$ is nullhomotopic too, but the first map sends the generator $\iota_1 \in H^1(bo)$ to $w_1(L) \in H^1(BO(1))$ and so sends $H^2(bo) = \bZ/2\{\Sq^1 \iota_1\}$  isomorphically to $H^2(BO(1)) = \bZ/2\{w_1(L)^2\}$. Thus the map $\psi^3-1 : bo \to bso$ is zero on $H^2(-)$, and so on all $\bZ/2$-cohomology.

We obtain extensions of $\mathcal{A}$-modules
\begin{align*}
 \mathcal{A}\{\iota_1\}/  (\Sq^3 \iota_1) \longleftarrow & H^*(\tau_{> 0} \GW^{s}(\bZ; +)) \longleftarrow \mathcal{A}\{\iota_1'\}/ (\Sq^2 \iota_1') \oplus  \mathcal{A}\{\iota_1''\}/ (\Sq^2 \iota_1'') \\
 \mathcal{A}\{\iota_3\}/ (\Sq^1 \iota_3, \Sq^5 \iota_3) \longleftarrow & H^*(\tau_{> 0} \GW^{s}(\bZ; -)) \longleftarrow \mathcal{A}\{\iota_2\}/(\Sq^1 \iota_2, \Sq^3 \iota_2) 
\end{align*}
and so find that $H^*(\tau_{\geq 0} \GW^{s}(\bZ; +))$ is generated as an $\mathcal{A}$-module by three elements of degree 1, and $H^*(\tau_{\geq 0} \GW^{s}(\bZ; -))$ is generated as an $\mathcal{A}$-module by an element of degree 2 and an element of degree 3.\

In the second case the right-hand term vanishes in total degree 3, so there is a unique lift $\bar{\iota}_3$ of the generator $\iota_3$ of the left-hand term. This must satisfy either $\Sq^1(\bar{\iota}_3)=0$ or $\Sq^1(\bar{\iota}_3)= \Sq^2 (\iota_2)$. The latter can be ruled out by assuming that this is the case and then using the Adams spectral sequence to calculate $\pi_3(\GW^{s}(\bZ; -))$, which one finds to be $\bZ/4$. But it is in fact seen to be $\bZ/16$ by calculating with \eqref{eq:GWQuillenSeq} (cf.\ the corresponding table in \cite[Section 3.2]{No9III}). Writing $uns M$ for the unstable quotient of an $\mathcal{A}$-module $M$, we therefore find that
\begin{align*}
uns H^*(\tau_{> 0} \GW^{s}(\bZ; +)) &\cong uns \mathcal{A}\{\iota_1, \iota_1', \iota_1''\}\\
uns H^*(\tau_{> 0} \GW^{s}(\bZ; -)) &\cong uns \mathcal{A}\{\iota_2, \bar{\iota}_3\}/(\Sq^1 \iota_2, \Sq^1 \bar{\iota}_3).
\end{align*}

In the first case a basis for $uns \mathcal{A}\{\iota_1\}$ is given by those $\Sq^{i_1} \cdots \Sq^{i_r} {\iota}_1$ which are admissible and have excess $\leq 1$, i.e.\ the $\Sq^{2^{i}} \cdots \Sq^{2} \Sq^1 {\iota}_1$, having degrees $1+1+2+2^2 + \cdots + 2^i$. This is only congruent to 1 modulo 4 for the class $\iota_1$ itself. The same goes for the summands generated by $\iota_1'$ and $\iota''_1$.

In the second case, $uns \mathcal{A}\{\iota_2\}/(\Sq^1 \iota_2)$ has basis given by those $\Sq^{i_1} \cdots \Sq^{i_r} {\iota}_2$ which are admissible, have excess $\leq 2$, and have $i_r \geq 2$, i.e.\ the $\Sq^{2^{i}} \cdots \Sq^{2^2} \Sq^2 {\iota}_2$. These all have even degree. On the other hand $uns \mathcal{A}\{\bar{\iota}_3\}/(\Sq^1 \bar{\iota}_3)$ has basis given by those $\Sq^{i_1} \cdots \Sq^{i_r} {\iota}_2$ which are admissible, have excess $\leq 3$, and have $i_r \geq 2$. These have the form $\Sq^{2^j(2^i+1)} \cdots \Sq^{2(2^i+1)} \Sq^{2^i+1} \Sq^{2^{i-1}} \cdots \Sq^{2} {\iota}_3$ for some $i \geq 1$ and $j \geq -1$ (correctly interpreted), and some checking of cases shows that such elements have degree congruent to 3 modulo 4 only in the case of ${\iota}_3$ itself.
\end{proof}

\begin{lemma}\label{lem:LowestClassEven}
If $n$ is even, so $(H_\bZ, \lambda)$ is symmetric, then on $\pi_1$ the map
$$B\Aut(H_\bZ, \lambda) \overset{inc}\lra \Omega^{\infty+2n}\LL^s(\bZ) \lra \tau_{\leq 1} \Omega^{\infty+2n}\LL^s(\bZ) = K(\bZ/2, 1)$$
assigns to an automorphism $\phi$ of $(H_\bZ, \lambda)$ the number $\dim_{\bZ/2} (\bZ/2 \otimes tors \tfrac{H_\bZ}{(\mathrm{Id}-\phi)H_\bZ})$ modulo 2. In particular this function $r_1$ is a homomorphism, and the map in question is the cohomology class given by $r_1$.
\end{lemma}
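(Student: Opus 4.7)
Since $K(\bZ/2, 1)$ is an Eilenberg--MacLane space, homotopy classes of pointed maps $B\Aut(H_\bZ, \lambda) \to K(\bZ/2, 1)$ correspond bijectively to homomorphisms $\Aut(H_\bZ, \lambda) \to \bZ/2$, via the effect on $\pi_1$. The plan is to compute this effect directly, identify it with $r_1$, and observe that this simultaneously proves that $r_1$ is a homomorphism and that the composition in question is the cohomology class $r_1$.

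First I would identify $(inc)_*(\phi) \in \pi_{2n+1}\LL^s(\bZ)$. The map $inc$ was constructed in Section \ref{sec:SurgeryBelowMid} as (the restriction to the relevant component of) the coassembly of the constant local system $H_\bZ$ over $B\Aut(H_\bZ, \lambda)$ with its tautological monodromy. Pulling back along the loop $S^1 \to B\Aut(H_\bZ, \lambda)$ picked out by $\phi$ and unwinding the coassembly using a small simplicial model of $S^1$ (or equivalently the homotopy coequaliser picture $S^1 \simeq \mathrm{hocoeq}(\mathrm{Id}, \phi)$), the resulting element of $\pi_{2n+1}\LL^s(\bZ) = \pi_1\mathcal{L}(\bZ, \Qoppa^{[-2n]})$ is represented by the algebraic mapping torus
$$T_\phi := \mathrm{cofib}\bigl(\phi - \mathrm{Id} : H_\bZ[-n] \lra H_\bZ[-n]\bigr)$$
equipped with the $(2n+1)$-dimensional symmetric Poincar{\'e} structure induced by $q_\lambda$ and the fact that $\phi$ is an isometry of $(H_\bZ[-n], q_\lambda)$.

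The long exact sequence of the defining cofibre sequence shows that the cohomology of $T_\phi$ vanishes outside degrees $n$ and $n+1$, with
$$H^n(T_\phi) = \ker(\mathrm{Id} - \phi) \qquad \text{and} \qquad H^{n+1}(T_\phi) = H_\bZ/(\mathrm{Id}-\phi) H_\bZ.$$
Since $n$ is even, $2n+1 \equiv 1 \pmod{4}$, so the class of $T_\phi$ in $\pi_{2n+1}\LL^s(\bZ)$ is detected by the de Rham invariant $r$. Invoking the standard formula for $r$ on a $(4j+1)$-dimensional symmetric Poincar{\'e} chain complex $C$ over $\bZ$, namely the mod $2$ reduction of $\dim_{\bZ/2}(\bZ/2 \otimes tors H^{2j+1}(C))$, one finds
$$r \circ (inc)_*(\phi) = \dim_{\bZ/2}\bigl(\bZ/2 \otimes tors\bigl(H_\bZ/(\mathrm{Id}-\phi)H_\bZ\bigr)\bigr) \pmod{2} = r_1(\phi).$$

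Finally, the effect on $\pi_1$ of any based map of spaces is a group homomorphism, which immediately implies that $r_1$ is a homomorphism. As any such homomorphism is realised by a unique (up to homotopy) pointed map into $K(\bZ/2, 1)$, the composition in the statement is precisely the cohomology class $r_1 \in H^1(B\Aut(H_\bZ, \lambda); \bZ/2)$. The main technical obstacle will be the step identifying $(inc)_*(\phi)$ with the mapping torus $T_\phi$; this is a careful unwinding of the coassembly for a chosen simplicial model of $S^1$, ultimately justified by bordism invariance of $L$-theory, which ensures that the answer is independent of the model chosen and agrees with the classical mapping torus cobordism class.
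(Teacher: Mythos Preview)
Your proposal is correct and follows essentially the same approach as the paper: identify $(inc)_*(\phi)$ with the algebraic mapping torus $T_\phi$ and compute its de Rham invariant via the torsion in $H^{n+1}(T_\phi)$. The paper's proof is terser---it simply asserts the mapping torus identification and that the de Rham invariant depends only on the underlying chain complex---whereas you spell out the coassembly-over-$S^1$ justification and the $K(\bZ/2,1)$ representability argument, but the substance is identical.
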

\begin{proof}
Given a $\phi \in \Aut(H_\bZ, \lambda)$, we are required to evaluate the de Rham invariant of the algebraic mapping torus $T_\phi$ of $\phi : H_\bZ[-n] \to H_{\bZ}[-n]$. Fortunately the value of this invariant does not depend on the symmetric structure but only on the underlying chain complex $T_\phi \simeq ( H_\bZ[-n] \xrightarrow{1-\phi} H_\bZ[-n-1] )$, and is given by 
$$\mathrm{dim}_{\bZ/2} (\bZ/2 \otimes  tors H^{-n-1}(T_\phi ; \bZ))= \dim_{\bZ/2} (\bZ/2 \otimes tors \tfrac{H_\bZ}{(\mathrm{Id}-\phi)H_\bZ}).$$
\end{proof}

\begin{remark}\label{rem:r1Calc}
Using the first fibration in \eqref{eq:GWQuillenSeq} and the calculation of the groups $H^1(\tau_{>0}\GW^s(\bZ ; +) ;\bZ/2)$ from the proof of Lemma \ref{lem:CohIsSmall}, we find that
\begin{equation}\label{eq:H1Even}
H_1(\Omega^\infty_0 \GW^s(\bZ ; +);\bZ) \cong \bZ/2 \oplus \bZ/2 \oplus \bZ/2.
\end{equation}
We may describe this as follows. 

Negation gives automorphisms $i_+$ and $i_-$ of $(\bZ, (1))$ and $(\bZ, (-1))$ respectively, and there is an automorphism $i_{rot}$ of the positive definite form $(\bZ^2, \left(\begin{smallmatrix}
1 & 0 \\
0 & 1
\end{smallmatrix}\right))$ given by the rotation $\left(\begin{smallmatrix}
0 & -1 \\
1 & 0
\end{smallmatrix}\right)$. In parallel we have the maps
$$({\det}^+, {\det}^-) : H_1(\Omega^\infty_0 \GW^s(\bZ ; +);\bZ) \lra H_1(\Omega^\infty_0 \GW^{top}(\bR ; +);\bZ) = \bZ/2 \oplus \bZ/2$$
which assign to an automorphism of a symmetric form the sign of the determinant of the induced map on positive and negative definite subspaces of its realification, as well as the de Rham invariant $r_1 : H_1(\Omega^\infty_0 \GW^s(\bZ ; +);\bZ) \to \bZ/2$ as described in the proof of Lemma \ref{lem:LowestClassEven}. It is easy to check that $r_1$ is nontrivial on each of $i_+, i_-, i_{rot}$, and that ${\det}^+(i_+)$ and ${\det}^-(i_-)$ are nontrivial but all other values of ${\det}^\pm$ on $i_+, i_-, i_{rot}$ are trivial. Thus $i_+, i_-, i_{rot}$ give a basis for the left-hand side of \eqref{eq:H1Even}, and ${\det}^+, {\det}^-, r_1$ give a dual basis for it.
\end{remark}

The following lemma finishes the proof of Theorem \ref{thm:FateofdR}.

\begin{lemma}\label{lem:LowestClassOdd}
If $n$ is odd then the composition
$$\Omega^\infty_0 \GW^s(\bZ; -) \lra \Omega^{\infty+2n} \LL^s(\bZ) \xrightarrow{\Omega^{\infty+2n} r} \Omega^{\infty+2n} \left(\bigoplus_{j \in \bZ} \H\bZ/2[4j+1] \right) \lra K(\bZ/2, 3)$$
given by projecting to the lowest degree summand is the unique nontrivial element of $H^3(\Omega^\infty_0 \GW^s(\bZ; -);\bZ/2) = H^3(BSp_\infty(\bZ);\bZ/2)$.
\end{lemma}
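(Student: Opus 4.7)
The plan is to identify the displayed composition with the unique nontrivial class in $H^3(\Omega^\infty_0\GW^s(\bZ;-);\bZ/2) \cong H^3(BSp_\infty(\bZ);\bZ/2) \cong \bZ/2$. First I would verify this cohomology group equals $\bZ/2$: from the fibre sequence \eqref{eq:GWQuillenSeq} and the long-exact-sequence analysis in the proof of Lemma \ref{lem:CohIsSmall} for $n$ odd, one obtains $H^3(\tau_{>0}\GW^s(\bZ;-);\bZ/2) \cong \bZ/2$, generated by a lift $\bar\iota_3$ of the generator of $H^4(bsp;\bZ/2)$. Since $\tau_{>0}\GW^s(\bZ;-)$ is $1$-connected (one checks $\pi_1 = 0$ from the same long exact sequence), the cohomology suspension $H^3(\tau_{>0}\GW^s(\bZ;-);\bZ/2) \to H^3(\Omega^\infty_0\GW^s(\bZ;-);\bZ/2)$ is an isomorphism in this stable range. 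Group completion then gives $H^3(BSp_\infty(\bZ);\bZ/2) \cong H^3(\Omega^\infty_0 \GW^s(\bZ;-);\bZ/2) \cong \bZ/2$.

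The composition in question is a composite of infinite loop maps landing in $K(\bZ/2,3)$, so is classified by a spectrum cohomology class $c \in H^3(\tau_{>0}\GW^s(\bZ;-);\bZ/2) \cong \bZ/2$; the remaining task is to show $c \neq 0$. Since $\bar\iota_3$ pairs nontrivially with the Hurewicz image in $\pi_3$ (spectrum cohomology into $H\bZ/2$ is detected by $\pi_3$ in our range), it suffices to show the induced map $\pi_3 \GW^s(\bZ;-) \to \pi_3 K(\bZ/2,3) = \bZ/2$ is nonzero. Equivalently, I must show that the composite $\pi_3 \GW^s(\bZ;-) \xrightarrow{\mathrm{bord}} \pi_{2n+3}\LL^s(\bZ) \xrightarrow{r} \bZ/2$ is nonzero, where the second map is projection onto the de Rham summand (which is an isomorphism here, since $2n+3 \equiv 1 \pmod 4$ for $n$ odd, cf.\ Section \ref{sec:LThyBG}).

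By the computation in Lemma \ref{lem:CohIsSmall}, $\pi_3\GW^s(\bZ;-) \cong \bZ/16$, so I must identify the bord map as the nontrivial surjection $\bZ/16 \twoheadrightarrow \bZ/2$. I would approach this either algebraically, by combining the fundamental fibre sequence $\K(\bZ)_{hC_2} \to \GW^s(\bZ;-) \to \LL^s(\bZ;-)$ from \cite{No9II} with Quillen's calculation of $K_*(\bZ)$ in low degrees and a small-range analysis of the $C_2$-equivariant homotopy induced by duality; or geometrically, by invoking the Family Signature Theorem \ref{thm:FSTLThy} in reverse, producing an explicit oriented surface bundle over a closed oriented $3$-manifold whose $5$-dimensional total space has nontrivial de Rham invariant, which would immediately force the class on $BSp$ to be nonzero.

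I expect the main obstacle to be this final nontriviality step; the cohomological setup and the classification of the map as a single element of $\bZ/2$ follow directly from Lemma \ref{lem:CohIsSmall}, but pinning down the bord map on a specific low-degree homotopy class, or exhibiting an explicit bundle with nontrivial de Rham invariant, will require additional work that is not merely formal.
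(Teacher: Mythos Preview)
Your overall structure is correct and matches the paper's: establish $H^3(\Omega^\infty_0\GW^s(\bZ;-);\bZ/2)\cong\bZ/2$, then show the composite is the nontrivial element. For the first step the paper uses the Serre spectral sequence of the fibration $Sp \to \Omega^\infty_0\GW^s(\bZ;-) \to BU$ obtained by looping \eqref{eq:GWQuillenSeq}, rather than your cohomology-suspension argument, but either works.

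Your approach (a) is essentially what the paper does. The fibre sequence you invoke becomes, after taking connective covers and a shift,
\[
\tau_{>0}\bigl((S^{2\sigma-2}\otimes\K(\bZ))_{hC_2}\bigr) \lra \tau_{>0}\GW^s(\bZ;-) \lra \tau_{>0}\Sigma^{-2n}\LL^s(\bZ)
\]
from the Main Theorem of \cite{No9II}. The paper analyses this on $\bZ/2$-cohomology rather than on $\pi_3$: one writes the induced map on $H^3$ as $\iota_3 \mapsto B\,\bar\iota_3$ for some $B\in\bZ/2$ and shows $B=1$ by a dimension count in the long exact cohomology sequence, after computing that $H^2$ of the fibre is at most one-dimensional (done by comparing with $(S^{2\sigma-2})_{hC_2}$ via the unit $\Sph\to\K(\bZ)$). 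This sidesteps having to identify the boundary map in the homotopy long exact sequence, and is probably what your approach (a) would become once made precise.

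Your approach (b), however, cannot succeed. For an oriented surface bundle $\Sigma_g \to E^5 \to B^3$ over an oriented $3$-manifold the vertical tangent bundle has rank $2$, so $w_3(T_\pi E)=0$; since $w(T_\pi E)=\Sq(V(\nu_\pi))$ this forces $\Sq^1 V_2(\nu_\pi)=0$, and then a short manipulation (as in the proof of Proposition~\ref{prop:dRmult}) gives $d(E)=0$ for every such bundle. The same Wu-class argument works for any $(4k{+}2)$-dimensional fibre, and the paper in fact records, as a \emph{consequence} of this lemma, that $\phi^* r_3 = 0$ for every oriented topological fibre bundle with fibre of dimension $\equiv 2 \pmod 4$. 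So the nontriviality of $r_3$ on $BSp_\infty(\bZ)$ is invisible to genuine fibre bundles and must be established by the homotopy-theoretic route.
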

\begin{proof}
We implicitly 2-complete everywhere, and work in $\bZ/2$-cohomology. Firstly, taking infinite loop spaces of the second fibration in \eqref{eq:GWQuillenSeq} and looping it up gives a fibration
$$Sp \lra \Omega^\infty_0 \GW^s(\bZ; -) \lra BU,$$
which is pulled back from a fibration $Sp \to \Omega^\infty_0 \GW^s(\bF_3; -) \to BSp$. By \cite[Proposition I.4.2]{FP} the Serre spectral sequence for the latter fibration collapses, so that of the former does too. It follows that $H^3(\Omega^\infty_0 \GW^s(\bZ; -)) = \bZ/2$, and in terms of the notation introduced in the proof of Lemma \ref{lem:CohIsSmall} it is generated by $\Omega^\infty \bar{\iota}_3$. We need to show that the map in the statement of the lemma is the nontrivial element of this group.

By the Main Theorem of \cite{No9II} (and that fact that $\pi_1(\Sigma^{-2n}\LL^s(\bZ))=0$) there is a homotopy fibre sequence of 0-connected covers
\begin{equation}\label{eq:No9Seq}
\tau_{>0}((S^{2\sigma-2} \otimes \K(\bZ))_{hC_2}) \lra \tau_{>0}\GW^s(\bZ; -) \lra \tau_{>0}\Sigma^{-2n}\LL^s(\bZ)
\end{equation}
where $\sigma$ denotes the sign representation of $C_2$. On cohomology the right-hand map has the form
$$H^*(\tau_{> 0} \GW^{s}(\bZ; -)) \longleftarrow \mathcal{A}\{\iota_2\}/(\Sq^1 \iota_2) \oplus \mathcal{A}\{\iota_3\}$$
in degrees $\leq 3$, and in terms of the calculations in the proof of Lemma \ref{lem:CohIsSmall} we must have $\iota_2 \mapsto A \iota_2$ and $\iota_3 \mapsto B\bar{\iota}_3$ for some $A, B \in \bZ/2$. We need to show that $B=1$, which we shall do by analysing the cohomology of $\tau_{>0}((S^{2\sigma-2} \otimes \K(\bZ))_{hC_2})$ in low degrees.

The unit map $\Sph \to \K(\bZ)$ induces an isomorphism on homotopy groups in degrees $\leq 2$, and induces an injection $\bZ/24 = \pi_3(\Sph) \to \pi_3(\K(\bZ)) = \bZ/48$. Thus the map $(S^{2\sigma-2})_{hC_2} \to (S^{2\sigma-2} \otimes \K(\bZ))_{hC_2}$ has fibre $\mathrm{F}$ which is 1-connected and has $\pi_2(\mathrm{F})=\bZ/2$. Taking 0-connected covers gives a fibration sequence
\begin{equation}\label{eq:DefF}\
\mathrm{F} \lra \tau_{>0}((S^{2\sigma-2})_{hC_2}) \lra \tau_{>0}((S^{2\sigma-2} \otimes \K(\bZ))_{hC_2}).
\end{equation}

By the Thom isomorphism we have
$$H^*((S^{2\sigma-2})_{hC_2}) \cong H^*(BC_2) \cdot u = \bZ/2[x] \cdot u$$
with $\mathcal{A}$-module structure determined by $\Sq(u) = w(2\sigma-2) \cdot u = (1+x^2) \cdot u$ , the Cartan formula, and the usual action of Steenrod operations on $H^*(BC_2) = \bZ/2[x]$. In degrees $\leq 3$ this means that the only nontrivial operations are
$$\Sq^1 (xu) = x^2 u \quad\quad\quad \Sq^2 u = x^2 u  \quad\quad\quad \Sq^2(xu) = x^3 u.$$
Using the fibre sequence $\tau_{>0}((S^{2\sigma-2})_{hC_2}) \to (S^{2\sigma-2})_{hC_2} \to \H\bZ$ we deduce that $H^i(\tau_{>0}((S^{2\sigma-2})_{hC_2}))$ is trivial for $i=0$ and 1-dimensional for $i=1,2$. Combined with \eqref{eq:DefF}, we find that $H^i(\tau_{>0}((S^{2\sigma-2} \otimes \K(\bZ))_{hC_2}))$ is trivial for $i=0$, 1-dimensional for $i=1$, and at most 1-dimensional for $i=2$. Returning to \eqref{eq:No9Seq}, as $H^1(\tau_{>0}((S^{2\sigma-2} \otimes \K(\bZ))_{hC_2}))=\bZ/2$ it follows that $A=0$, but then we have an exact sequence
$$0 \to \bZ/2\{\iota_2\} \to H^2(\tau_{>0}((S^{2\sigma-2} \otimes \K(\bZ))_{hC_2})) \to \bZ/2\{\iota_3\} \overset{B \cdot -}\to \bZ/2\{\bar{\iota}_3\} \to \cdots$$
and as $H^2(\tau_{>0}((S^{2\sigma-2} \otimes \K(\bZ))_{hC_2}))$ has dimension at most 1 it follows that $B=1$, as required.
\end{proof}

The following lemma shows that the de Rham invariants vanish in the presence of a quadratic structure.

\begin{lemma}
If $(H_\bZ, \lambda)$ is a nondegenerate $(-1)^n$-symmetric form and $\mu : H_\bZ \to \bZ/(1-(-1)^n)$ is a quadratic refinement of it, then the composition
$$B\Aut(H_\bZ, \lambda, \mu) \lra B\Aut(H_\bZ, \lambda) \xrightarrow{inc} \Omega^{\infty+2n} \LL^s(\bZ) \xrightarrow{\Omega^{\infty+2n} r} \Omega^{\infty+2n} \left(\bigoplus_{j \in \bZ} \H\bZ/2[4j+1] \right)$$
is nullhomotopic.
\end{lemma}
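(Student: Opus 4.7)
The plan is to factor $inc$ restricted to $B\Aut(H_\bZ, \lambda, \mu)$ through quadratic $L$-theory. The datum of the quadratic refinement $\mu$ endows $H_\bZ[-n]$ with the structure of a Poincar\'e object in the quadratic Poincar\'e category $(\mathcal{D}^p(\bZ), \Qoppa^{q,[-2n]})$: its underlying bilinear pairing is $\lambda$ and $\mu$ supplies the additional quadratic datum. This construction is functorial in automorphisms preserving $(\lambda, \mu)$, so running the recipe of Lemma \ref{lem:RanickiOr} yields a map $B\Aut(H_\bZ, \lambda, \mu) \to \Omega^{\infty+2n}\LL^q(\bZ)$ whose composition with the symmetrization $\mathrm{sym}: \LL^q(\bZ) \to \LL^s(\bZ)$ (induced by the natural transformation $\Qoppa^q \to \Qoppa^s$) agrees with the restriction of $inc$. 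It therefore suffices to show that the composite of spectra
$$\LL^q(\bZ) \overset{\mathrm{sym}}\lra \LL^s(\bZ) \overset{r}\lra \bigoplus_{j \in \bZ} \H\bZ/2[4j+1]$$
is nullhomotopic.

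Since the target is 2-local we may work 2-locally. I would first observe that the composite is a map of $\H\bZ_{(2)}$-modules: $\mathrm{sym}$ is an $\LL^s(\bZ)$-module map (the $\LL^s(\bZ)$-module structure on $\LL^q(\bZ)$ comes from the hermitian pairing $\Qoppa^s \otimes \Qoppa^q \to \Qoppa^q$, with which $\mathrm{sym}$ is plainly compatible), and $r$ is an $\H\bZ_{(2)}$-module map by its construction in Section \ref{sec:LThyBG}. The canonical $\H\bZ_{(2)}$-module structure on any $\MSO_{(2)}$-module from Section \ref{sec:LThyBG} then propagates across the whole composite. Next, since $\LL^q(\bZ)_{(2)}$ is an $\MSO_{(2)}$-module (via $\MSO \to \LL^s(\bZ)$ and the $\LL^s$-module structure), the Taylor--Williams splitting argument used in Section \ref{sec:LThyBG} supplies an equivalence of $\H\bZ_{(2)}$-modules
$$\LL^q(\bZ)_{(2)} \simeq \bigoplus_{i \in \bZ} \H\bZ_{(2)}[4i] \oplus \bigoplus_{i \in \bZ} \H\bZ/2[4i+2],$$
the degrees being forced by the well-known homotopy groups of $\LL^q(\bZ)$.

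The proof is then completed by a direct computation of the $\H\bZ_{(2)}$-module $\mathrm{Hom}$ groups from each summand into $\H\bZ/2[4j+1]$. For the free summands one has $[\H\bZ_{(2)}[4i], \H\bZ/2[4j+1]]_{\H\bZ_{(2)}} = \pi_{4i-4j-1}(\H\bZ/2) = 0$ since $4i-4j-1 \neq 0$. For the torsion summands, the cofibre sequence $\H\bZ_{(2)} \xrightarrow{2} \H\bZ_{(2)} \to \H\bZ/2$ shows $\mathrm{Map}_{\H\bZ_{(2)}}(\H\bZ/2, \H\bZ/2) \simeq \H\bZ/2 \oplus \Omega\H\bZ/2$, whose homotopy groups are $\bZ/2$ concentrated in degrees $0$ and $-1$; the relevant degree $4(j-i)-1$ is odd and hence never equals $0$ or $1$, so $[\H\bZ/2[4i+2], \H\bZ/2[4j+1]]_{\H\bZ_{(2)}}=0$ as well. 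Every component of the composite is thus null, hence the composite itself is.

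The main obstacle I anticipate is verifying the structural $\H\bZ_{(2)}$-module compatibility, most concretely that $\mathrm{sym}$ really does inherit the $\H\bZ_{(2)}$-module structure compatible with $r$; this boils down to chasing the pairing $\Qoppa^s \otimes \Qoppa^q \to \Qoppa^q$ through the Calm\`es--Dotto--Harpaz--Hebestreit--Land--Moi--Nardin--Nikolaus--Steimle framework, and confirming that the splitting $\LL^s(\bZ)_{(2)} \simeq \LL^s(\bR)_{(2)} \oplus \mathrm{dR}$ supplied by Hebestreit--Land--Nikolaus is indeed one of $\H\bZ_{(2)}$-modules, not just a homotopy-level decomposition. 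Once these structural checks are in place both the Taylor--Williams splitting and the final $\mathrm{Hom}$ computation are entirely routine.
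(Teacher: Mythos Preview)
Your first step---factoring the restriction of $inc$ through $\Omega^{\infty+2n}\LL^q(\bZ)$ via the quadratic refinement $\mu$---is exactly what the paper does. Where you diverge is in the second step: the paper simply cites Taylor--Williams \cite[eq.\ (1.10)]{TaylorWilliams} for the fact that $\LL^q(\bZ) \to \LL^s(\bZ) \overset{r}\to \bigoplus_j \H\bZ/2[4j+1]$ is null, and stops there. You instead reprove this fact by splitting $\LL^q(\bZ)_{(2)}$ as an $\H\bZ_{(2)}$-module and computing the relevant $\H\bZ_{(2)}$-module mapping groups to be zero for degree reasons. This is a legitimate alternative, and indeed is the same style of argument the paper itself uses in the proof of Theorem~\ref{thm:LinkingTriv} (the ``$p$ odd'' case). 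What you gain is a self-contained argument not depending on locating the right equation in \cite{TaylorWilliams}; what you pay is exactly the structural burden you flag, namely confirming that the composite is an $\H\bZ_{(2)}$-module map so that the Hom computation applies. The $\LL^s(\bZ)$-linearity of $\mathrm{sym}$ is straightforward, but for $r$ one should be slightly careful: the paper's Remark after \eqref{eq:HZ2ModEqs} explicitly declines to assert that Taylor--Williams' $r$ agrees with the canonical $\H\bZ_{(2)}$-module projection. A clean fix is to replace $r$ by the $\H\bZ_{(2)}$-module projection $\LL^s(\bZ)_{(2)} \to \mathrm{dR} \simeq \bigoplus_j \H\bZ/2[4j+1]$ coming from the Hebestreit--Land--Nikolaus splitting, which differs from $r$ at most by an automorphism of the target and so gives the same nullhomotopy conclusion.

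One small slip: in your final sentence you say $4(j-i)-1$ ``is odd and hence never equals $0$ or $1$'', but $1$ is odd. The correct reason it never equals $1$ is that $4(j-i)-1 \equiv 3 \pmod 4$; the conclusion is unaffected.
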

\begin{proof}
The composition of the first two maps agrees with 
$$B\Aut(H_\bZ, \lambda, \mu) \overset{inc}\lra \Omega^{\infty+2n} \LL^q(\bZ) \lra \Omega^{\infty+2n} \LL^s(\bZ),$$
but by \cite[eq.\ (1.10)]{TaylorWilliams} the composition $\LL^q(\bZ) \to  \LL^s(\bZ) \overset{r}\to \bigoplus_{j \in \bZ} \H\bZ/2[4j+1]$ is null.
\end{proof}

\section{Examples and applications}

\subsection{Multiplicativity of the signature}\label{sec:MultSigPoinc}

Normal $L$-theory $\LL^n(\bZ)$ fits into a fibration sequence
$$\LL^q(\bZ) \lra \LL^s(\bZ) \lra \LL^n(\bZ),$$
where the first map encodes the forgetful map from quadratic to symmetric $L$-theory. Its homotopy groups are therefore
$$\pi_i(\LL^n(\bZ)) = \begin{cases}
\bZ/8 & i \equiv 0 \mod 4\\
\bZ/2 & i \equiv 1,3 \mod 4\\
0 & i \equiv 2 \mod 4.
\end{cases}$$
It is in fact a ring spectrum and $\LL^s(\bZ) \to \LL^n(\bZ)$ is a ring map, which may be seen by tracing through the definitions in \cite[p.\ 384]{RanickiPreprint}; as it is 2-local, it follows as in Section \ref{sec:LThyBG} that $\LL^n(\bZ)$ is an Eilenberg--MacLane spectrum. Taylor and Williams produce specific maps
\begin{align*}
\hat{L} : \LL^n(\bZ) &\lra \bigoplus_{i \in \bZ} \H\bZ/8[4i]\\
\hat{r} : \LL^n(\bZ) & \lra \bigoplus_{j \in \bZ} \H\bZ/2[4j+1]
\end{align*}
and $\Sigma k : \LL^n(\bZ) \to \bigoplus_{k \in \bZ} \H\bZ/2[4k+2]$ exhibiting it as an Eilenberg--MacLane spectrum, whose precompositions with $\LL^s(\bZ) \to \LL^n(\bZ)$ are $L \mod 8$, $r$, and 0 respectively. 

Ranicki has constructed \cite[p.\ 385]{RanickiPreprint} a normal signature map $\sigma^n$ such that
\begin{equation*}
\begin{tikzcd}
\MSTop \arrow[rd, "\sigma"] \rar& \Mock^P \dar{\sigma^P} \rar & \mathrm{MSG} \dar{\sigma^n}\\
 & \LL^s(\bZ) \rar& \LL^n(\bZ)
\end{tikzcd}
\end{equation*}
commutes up to homotopy; here $\mathrm{MSG}$ is the Thom spectrum of the universal spherical fibration over $BSG$, and is a model for the cobordism theory given by Quinn's normal spaces \cite{QuinnNormal, QuinnUnpub}. Furthermore, Taylor and Williams construct $\hat{L}$ so that $\hat{L} \circ \sigma^n$ corresponds under the Thom isomorphism to the characteristic class of stable spherical fibrations $l \in H^{4*}(BSG ; \bZ/8)$ constructed by Brumfiel--Morgan \cite[\S 8]{BrumfielMorgan}. Using this we can apply the results of the previous section to oriented fibrations with finite Poincar{\'e} fibre as follows. (It is likely that this can be extended to Poincar{\'e} fibres which are merely finitely-dominated, but we do not pursue this here.) 

\begin{proposition}\label{prop:PoincarLMod8}
If $F^d \to E \overset{\pi} \to B$ is an oriented fibration with finite Poincar{\'e} fibre and finitely-dominated base then it has a fibrewise Spivak normal fibration $\nu_\pi$, and
$$
\int_\pi l(\nu_\pi) = \begin{cases}
0 & d \text{ odd}\\
(\Omega^{1-(-1)^n} \widetilde{\mathrm{ph}})(\xi_\bR) & d \text{ even}
\end{cases} \in H^{4*-d}(B;\bZ/8).
$$
\end{proposition}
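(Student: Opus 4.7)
The plan is to identify $\int_\pi l(\nu_\pi)$ with the $\hat L$-component of the family symmetric signature in $\LL^s(\bZ)$, apply the surgery-below-the-middle-dimension argument of Section~\ref{sec:SurgeryBelowMid} (which is available because $\pi$ is a fibration), and then invoke Theorem~\ref{thm:LinkingTriv} or Theorem~\ref{thm:2LocalPontrjaginFormula} in the odd and even cases respectively.

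First I would realise $\pi$ as an oriented Poincar{\'e} mock bundle by triangulating $B$ and choosing compatible fundamental chains on the $E_\sigma = \pi^{-1}(\sigma)$, just as in the construction of $\MSTop \to \Mock^P$. This gives a classifying map $B \to \Omega^{\infty+d}\Mock^P$ whose composition with $\Mock^P \to \mathrm{MSG}$ is, up to homotopy, the parameterised Pontrjagin--Thom map for the fibrewise Spivak normal fibration $\nu_\pi$. The defining property of the Brumfiel--Morgan class (that $\hat L\circ\sigma^n$ is the Thomification of $l$) thus identifies $\int_\pi l(\nu_\pi)$ with the composition
$$B \to \Omega^{\infty+d}\Mock^P \xrightarrow{\Omega^{\infty+d}\sigma^P} \Omega^{\infty+d}\LL^s(\bZ) \to \Omega^{\infty+d}\LL^n(\bZ) \xrightarrow{\Omega^{\infty+d}\hat L} \Omega^{\infty+d}\bigoplus_{i\in\bZ}\H\bZ/8[4i],$$
using the commutative diagram relating $\sigma^P$ and $\sigma^n$ together with $\hat L\circ(\LL^s(\bZ)\to\LL^n(\bZ))=L\bmod 8$. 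Since $\pi$ is a fibration the inclusions $E_\tau \subset E_\sigma$ are homotopy equivalences, so Section~\ref{sec:SurgeryBelowMid} factors the family signature $B \to \Omega^{\infty+d}\LL^s(\bZ)$ as $inc\circ\phi$, where $\phi$ classifies the local system of intersection or linking forms on $F$.

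For $d$ odd, Theorem~\ref{thm:LinkingTriv} shows $inc$ is homotopic to a constant, so $\int_\pi l(\nu_\pi)=0$. For $d=2n$ even, the $2$-local splitting $\LL^s(\bZ)_{(2)} \simeq \LL^s(\bR)_{(2)} \oplus \mathrm{dR}$ of Section~\ref{sec:LThyBG}, together with the identification of the projection $\LL^s(\bZ)_{(2)} \to \LL^s(\bR)_{(2)} \simeq \bigoplus_i \H\bZ_{(2)}[4i]$ with the Morgan--Sullivan class $L$, reduces the calculation to the mod~$8$ reduction of the $\LL^s(\bR)_{(2)}$-component of $inc\circ\phi$. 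By naturality of $inc$ under the coefficient change $\bZ \to \bR$, this component further factors through $B \to B\Aut(H^n(F;\bR),\lambda) \xrightarrow{inc_\bR} \Omega^{\infty+2n}\LL^s(\bR)_{(2)}$, pulling back the universal $\xi_\bR$ to the class in the statement. Theorem~\ref{thm:2LocalPontrjaginFormula} (combined with the $4$-periodicity identification $\Omega^{\infty+2n}\LL^s(\bR)_{(2)} \simeq \Omega^{\infty+1-(-1)^n}\LL^s(\bR)_{(2)}$) then identifies this with $(\Omega^{1-(-1)^n}\widetilde{\mathrm{ph}})(\xi_\bR)$, and reducing modulo~$8$ yields the formula.

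The main obstacle is less conceptual than bookkeeping: one has to confirm that the projection $\LL^s(\bZ)_{(2)} \to \LL^s(\bR)_{(2)}$ supplied by the Hebestreit--Land--Nikolaus splitting agrees with the natural coefficient-change map used implicitly in Theorem~\ref{thm:2LocalPontrjaginFormula}, and that the naturality of $inc$ under $\bZ \to \bR$ really does give the stated factorisation through $B\Aut(H^n(F;\bR),\lambda)$. Once these compatibilities are in place, the rest is simply assembly of the preceding results.
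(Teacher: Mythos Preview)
Your proposal is correct and follows essentially the same route as the paper, which simply asserts that the proposition follows from Theorem~\ref{thm:2LocalPontrjaginFormula} together with the surgery-below-the-middle-dimension argument of Section~\ref{sec:SurgeryBelowMid}. You have unpacked this more carefully than the paper does: the factorisation through $\sigma^n$ via the commutative diagram, the use of Theorem~\ref{thm:LinkingTriv} in the odd case (which the paper's one-line justification leaves implicit), and the explicit identification of $\hat L$ with $L \bmod 8$ are all the right ingredients, and your flagged bookkeeping concerns are genuine but routine.
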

\begin{proof}
By pulling back to a finite CW-complex dominating $B$ we may assume without loss of generality that $B = |K|$ is the realisation of a finite semi-simplicial set. By induction on the simplices of $K$ we may construct a $U \subset |K| \times \bR^N$ and compatible maps
$$\psi\vert_\sigma : U\vert_\sigma := U \cap(\sigma \times \bR^N) \overset{\sim}\lra E\vert_\sigma$$
for each simplex $\sigma$ of $K$, such that each $U\vert_\sigma  \subset \sigma \times \bR^N$ is a codimension 0 compact submanifold with boundary $\partial (U\vert_\sigma) =  (U\vert_{\partial\sigma}) \cup \partial_0(U\vert_\sigma)$. The inclusions $U\vert_\tau \to U\vert_\sigma$ for $\tau \leq \sigma$ are equivalences, as $E\vert_\tau \to E\vert_\sigma$ are, and it follows from Poincar{\'e} duality and induction over simplices that (for $N$ large enough) the inclusions $\partial_0(U\vert_\tau) \to \partial_0(U\vert_\sigma)$ are equivalences too. As the inclusions $\partial_0(U\vert_\sigma) \to U\vert_\sigma$ have homotopy fibre a $(N-d-1)$-sphere when $\sigma$ is a 0-simplex, by Spivak's theorem \cite[Proposition 4.6]{Spivak}, it follows that the same is true for all simplices. Then
$$\partial_0 U := \bigcup_{\sigma \in K} \partial_0(U\vert_\sigma) \lra \bigcup_{\sigma \in K} U\vert_\sigma = U \simeq E$$
also has homotopy fibre a $(N-d-1)$-sphere so gives the required oriented spherical fibration $\nu_\pi$. Collapsing the complement of $U$ gives a Pontrjagin--Thom collapse map
$$|K|_+ \wedge S^N \lra U/\partial_0 U \simeq \mathrm{Th}(\nu_\pi \to E)$$
which can be composed with the Thomification of the map $E \to BSG$ classifying the oriented spherical fibration $\nu_\pi$ to obtain $\alpha : |K| \to \Omega^{\infty+d} \mathrm{MSG}$. The manifolds $U\vert_\sigma \subset \sigma \times \bR^N$ can inductively be equipped with singular chains representing $(\psi\vert_\sigma)^{-1}_*[E\vert_\sigma, E\vert_{\partial \sigma}] \in H_{d + \mathrm{dim}(\sigma)}(U\vert_\sigma, U\vert_{\partial\sigma} ; \bZ)$ in order to define a semi-simplicial map $f : K \to \mathrm{Mock}^P(d)$.

We can then form the diagram
\begin{equation*}
\begin{tikzcd}[column sep={0.5em}, ampersand replacement=\&]
{|K|} \arrow[rd, "f"] \arrow[rrd, bend left=20, "\alpha"] \arrow[d, "\phi"] \& \\
{\begin{cases}
B\mathrm{Aut}(H^n(F;\bR), \lambda) & d=2n\\
* & d=2n+1
\end{cases}} \arrow[dd, "inc"] \& {|\mathrm{Mock}^P(d)|} \arrow[r] \arrow[d, "\sigma^P"] \& \Omega^{\infty+d} \mathrm{MSG} \arrow[d, "\sigma^n"]\\
 \& \Omega^{\infty+d} \LL^s(\bZ) \arrow[r]\arrow[d, "L"] \arrow[ld] \& \Omega^{\infty+d} \LL^n(\bZ) \arrow[d, "\hat{L}"]\\
\Omega^{\infty+d} \LL^s(\bR) \arrow[r, "\sim"] \& \Omega^{\infty+d}(\bigoplus_{i \in \bZ} \H\bZ_{(2)}[4i]) \arrow[r] \& \Omega^{\infty+d}(\bigoplus_{i \in \bZ} \H\bZ/8[4i])
\end{tikzcd}
\end{equation*}
in which all regions commute up to homotopy tautologically, apart from the leftmost trapezium, which commutes up to homotopy by surgery below the middle dimension as explained in Section \ref{sec:SurgeryBelowMid}. The claimed formula then follows by combining this diagram with Theorem \ref{thm:2LocalPontrjaginFormula}.
\end{proof}

\begin{remark}
The fibrewise Spivak fibration $\nu_\pi$ exists without assuming that the fibre is finite; constructions are given in \cite[Proposition 1.8]{QuinnUnpub} and \cite[Addendum C]{KleinDualising}.
\end{remark}

\begin{remark}
Taylor and Williams do not settle what characteristic class $\hat{r} \circ \sigma^n$ corresponds to in $H^{4*+1}(BSG;\bZ/2)$ under the Thom isomorphism, but if we call it $\rho$ for now then there is a corresponding identity for $\int_\pi \rho(\nu_\pi)$. It would be interesting to determine what this $\rho$ is.
\end{remark}

As a consequence we obtain the following generalisation of the result explained in Remark \ref{rem:MultSig}. It generalises \cite[Theorem 7.2]{Korzeniewski} in that the base can be an arbitrary Poincar{\'e} complex, not necessarily simple.

\begin{corollary}\label{cor:MultSigPoinc}
Let $F^d \to E^{4k} \overset{\pi}\to B^{4k-d}$ be an oriented fibration of oriented Poincar{\'e} complexes with finite fibre. Then
$$\sigma(E) \equiv \sigma(B) \cdot \sigma(F) \mod 4.$$
\end{corollary}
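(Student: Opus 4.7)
The plan is to combine Proposition~\ref{prop:PoincarLMod8} with the Brumfiel--Morgan mod-$8$ signature theorem and the mod-$4$ divisibility from Lemma~\ref{lem:DefPHTilde}. First I would invoke Brumfiel--Morgan \cite{BrumfielMorgan}: any oriented Poincar{\'e} complex $X$ satisfies $\sigma(X) \equiv \int_X l(\nu_X) \bmod 8$, since $\hat L \circ \sigma^n$ corresponds under the Thom isomorphism to $l$ and $\sigma^n$ sends the Poincar{\'e} fundamental class to the signature modulo $8$. For the fibration at hand the Spivak normal fibrations enjoy a stable equivalence $\nu_E \simeq \pi^*\nu_B \oplus \nu_\pi$ (the analogue for Spivak fibrations of $TE \cong \pi^*TB \oplus T_\pi E$ in the smooth case), so by multiplicativity of $l$ and the projection formula for $\int_\pi$ one obtains
\begin{equation*}
\sigma(E) \;\equiv\; \int_B l(\nu_B)\cdot \int_\pi l(\nu_\pi) \bmod 8.
\end{equation*}

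Next, I would apply Proposition~\ref{prop:PoincarLMod8} to the inner fibre integral. If $d$ is odd the proposition gives $\int_\pi l(\nu_\pi) = 0$, so $\sigma(E) \equiv 0 \bmod 8$; combined with $\sigma(F) = 0$ for an odd-dimensional Poincar{\'e} complex the congruence is trivial. If $d = 2n$ is even then Proposition~\ref{prop:PoincarLMod8} yields
\begin{equation*}
\int_\pi l(\nu_\pi) \;=\; (\Omega^{1-(-1)^n}\widetilde{\mathrm{ph}})(\xi_\bR),
\end{equation*}
and Lemma~\ref{lem:DefPHTilde} asserts that $\widetilde{\mathrm{ph}}_i$ is divisible by $4$ for every $i>0$. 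So modulo $4$ only the degree-zero part of $\widetilde{\mathrm{ph}}(\xi_\bR)$ survives.

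Finally I would split on the parity of $n$. When $n$ is even the class $\xi_\bR = \mathcal{H}_\bR^+ - \mathcal{H}_\bR^-$ lies in $KO^0$ and $\widetilde{\mathrm{ph}}_0(\xi_\bR)$ is its virtual rank $\dim\mathcal{H}_\bR^+ - \dim\mathcal{H}_\bR^- = \sigma(H^n(F;\bR),\lambda) = \sigma(F)$, so $\int_\pi l(\nu_\pi) \equiv \sigma(F) \bmod 4$ and
\begin{equation*}
\sigma(E) \;\equiv\; \int_B l(\nu_B)\cdot \sigma(F) \;\equiv\; \sigma(B)\cdot\sigma(F) \bmod 4.
\end{equation*}
When $n$ is odd, $\xi_\bR\in KO^{-2}$, so $(\Omega^2\widetilde{\mathrm{ph}})(\xi_\bR)$ has no degree-zero component and each of its contributions in $H^{4i-2}$ with $i\ge 1$ is divisible by $4$; hence $\int_\pi l(\nu_\pi)\equiv 0\bmod 4$. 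Moreover $\dim B = 4k-d \equiv 2 \bmod 4$ forces $\sigma(B) = 0$, so both sides vanish. The main delicate point is pinning down the stable decomposition $\nu_E \simeq \pi^*\nu_B \oplus \nu_\pi$ and the compatibility of $l$ with pullback and sum of stable spherical fibrations---once that is secured, the mod-$4$ divisibility of $\widetilde{\mathrm{ph}}_i$ for $i>0$ does the rest.
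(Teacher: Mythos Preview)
Your approach is essentially the same as the paper's, but there is one genuine error in the execution. You claim that multiplicativity of the Brumfiel--Morgan class $l$ gives
\[
\sigma(E) \;\equiv\; \int_B l(\nu_B)\cdot \int_\pi l(\nu_\pi) \bmod 8,
\]
but $l$ is \emph{not} multiplicative modulo $8$: Brumfiel--Morgan's Theorem~I~(iii) gives a formula for $l$ of a fibrewise join which has correction terms modulo $8$, and is only a straight product modulo $4$. This is exactly why the paper passes from $\bmod\ 8$ to $\bmod\ 4$ at this step, writing
\[
\int_E l(\nu_E) \;\equiv\; \int_E \pi^*l(\nu_B)\cdot l(\nu_\pi) \bmod 4.
\]
In particular your intermediate conclusion ``$\sigma(E)\equiv 0\bmod 8$ when $d$ is odd'' is too strong and need not hold. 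Once you weaken the displayed congruence to $\bmod\ 4$, your argument is correct and matches the paper's; your more explicit case split on the parity of $n$ (identifying the degree-zero part of $\widetilde{\mathrm{ph}}(\xi_\bR)$ with $\sigma(F)$ when $n$ is even, and noting there is no degree-zero part when $n$ is odd) is a fine way to unpack the paper's one-line assertion that $\int_\pi l(\nu_\pi)\equiv\sigma(F)\bmod 4$. The stable identification $\nu_E \simeq \pi^*\nu_B * \nu_\pi$ that you flag is indeed needed and is used without further comment in the paper.
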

\begin{proof}
Writing $\nu_X$ for the Spivak normal fibration of a Poincar{\'e} complex $X$, and $\nu_\pi$ for the fibrewise Spivak normal fibration of $\pi$, there is an identity $\nu_E \simeq \pi^*(\nu_B) * \nu_\pi$ (see \cite[Theorem I]{KleinDualising}). Using the properties of the characteristic class $l$ given in \cite[Theorem I]{BrumfielMorgan} we calculate
\begin{align*}
\sigma(E) & \equiv \int_E l(\nu_E) \mod 8\\
&\equiv \int_E \pi^*l(\nu_B) \cdot l(\nu_\pi) \mod 4\\
& \equiv \int_B l(\nu_B) \int_\pi l(\nu_\pi) \mod 4
\end{align*}
but by Proposition \ref{prop:PoincarLMod8} and the last part of Lemma \ref{lem:DefPHTilde} we have $\int_\pi l(\nu_\pi) \equiv \sigma(F) \mod 4$ and so the above becomes $\sigma(B) \cdot \sigma(F) \mod 4$.
\end{proof}

This line of reasoning will give, in principle, information about $\sigma(E) - \sigma(B) \cdot \sigma(F) \mod 8$. There will be additional terms coming from (i) the characteristic class $l$ not exactly being multiplicative modulo 8 \cite[Theorem I (iii)]{BrumfielMorgan}, and (ii) the classes $\widetilde{\mathrm{ph}}_i$ not necessarily vanishing mod 8. It would be interesting to compare this with the formula of \cite{Rovi}.

\subsection{Fibrewise Stiefel--Whitney classes}

As a further application of Proposition \ref{prop:PoincarLMod8} we have the following vanishing result for fibre integrals of fibrewise Stiefel--Whitney classes. 

\begin{corollary}\label{cor:FibSWClass}
If $F \to E \overset{\pi}\to B$ is an oriented fibration with finite Poincar{\'e} fibre, then
$$\int_\pi w_i(-\nu_\pi) = \begin{cases}
\chi(F) & i=d \\
0 & \text{ else}.
\end{cases}$$
\end{corollary}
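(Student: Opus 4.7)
The plan is to use Proposition \ref{prop:PoincarLMod8} combined with the fibrewise Wu formula. For any $x \in H^*(E;\bZ/2)$, the fibrewise Wu identity for the Poincar{\'e} fibration $\pi$ asserts
$$\int_\pi V(-\nu_\pi) \cdot x \;=\; \int_\pi \Sq(x),$$
where $V(-\nu_\pi) := \Sq^{-1}(w(-\nu_\pi))$ is the fibrewise Wu class; this follows from the universal Thom-space relation $\Sq(u_{\nu_\pi}) = u_{\nu_\pi} \cdot w(\nu_\pi)$ applied through the parameterised Pontryagin--Thom map defining $\int_\pi$. Specialising to $x = V(-\nu_\pi)$ and using $\Sq(V) = w$ yields the key identity
$$\int_\pi w(-\nu_\pi) \;=\; \int_\pi V(-\nu_\pi)^2.$$

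Since $\nu_\pi$ is oriented, $w_1(-\nu_\pi) = 0$, and a standard induction using the Wu relations for $\Sq^i w_j$ shows that all odd-degree Wu classes vanish. Hence $V(-\nu_\pi) \in H^{\mathrm{even}}(E;\bZ/2)$, and Frobenius in characteristic $2$ gives $V(-\nu_\pi)^2 = \sum_k V_{2k}(-\nu_\pi)^2 \in H^{4*}(E;\bZ/2)$. This immediately yields $\int_\pi w_i(-\nu_\pi) = 0$ whenever $i \not\equiv 0 \pmod 4$; the cases $i < d$ are also trivial since the target group $H^{i-d}(B;\bZ/2)$ vanishes.

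For $i = 4k \geq d$, the mod 2 identification of the Brumfiel--Morgan class (forced by Hirzebruch's signature theorem through $\mathscr{L}(\nu_M) = \mathcal{L}(TM)$ rationally) gives $l_k(\nu_\pi) \bmod 2 = V_{2k}(-\nu_\pi)^2$, so $\int_\pi w_{4k}(-\nu_\pi) = \int_\pi l_k(\nu_\pi) \bmod 2$. Proposition \ref{prop:PoincarLMod8} expresses this integral in terms of $\widetilde{\mathrm{ph}}(\xi_\bR)$, and by Lemma \ref{lem:DefPHTilde} the components $\widetilde{\mathrm{ph}}_j$ with $j > 0$ are divisible by $4$ and hence vanish modulo $2$. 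Only the degree-$0$ contribution on $B$ survives: this is present exactly when $d \equiv 0 \pmod 4$ with $k = d/4$, where it equals $\widetilde{\mathrm{ph}}_0(\xi_\bR) = \operatorname{rank}(\xi_\bR) = \sigma(F) \equiv \chi(F) \pmod 2$. In the remaining cases ($d$ odd, or $d \equiv 2 \pmod 4$), Poincar{\'e} duality forces $\chi(F) \equiv 0 \pmod 2$---respectively from $\chi = -\chi$ and from the evenness of the middle Betti number under a nondegenerate symplectic form---so both sides of the claimed identity vanish. The main technical subtlety is to verify the fibrewise Wu formula carefully for Poincar{\'e} fibrations and to pin down that $l(\nu_\pi) \bmod 2$ corresponds to $V(-\nu_\pi)^2$ rather than $V(\nu_\pi)^2$.
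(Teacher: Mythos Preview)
Your ``fibrewise Wu identity'' $\int_\pi V(-\nu_\pi)\cdot x = \int_\pi \Sq(x)$ is false in general. Test it on a trivial bundle $E = F \times B \to B$ with $x = \alpha \times \beta$: by the ordinary Wu formula on $F$ the left side is $(\int_F \Sq\alpha)\cdot\beta$, while the right side is $(\int_F \Sq\alpha)\cdot\Sq(\beta)$; these differ whenever $\int_F \Sq\alpha \neq 0$ and $\beta$ has positive degree. The correct relation, which really does follow from $\Sq(u_{\nu_\pi}) = u_{\nu_\pi}\cdot w(\nu_\pi)$ and naturality of $\Sq$ under the Pontrjagin--Thom map, is
\[
\Sq\Bigl(\int_\pi x\Bigr) \;=\; \int_\pi w(\nu_\pi)\cdot \Sq(x).
\]
Applying this with $x = V(-\nu_\pi)^2$ (your notation) gives $\Sq\bigl(\int_\pi V^2\bigr) = \int_\pi w(-\nu_\pi)$, not your ``key identity'' $\int_\pi V^2 = \int_\pi w(-\nu_\pi)$. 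The latter does turn out to hold, but only because both sides are the constant $\chi(F)\bmod 2$, which is exactly what you are trying to prove.

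The paper's proof runs your ingredients in the opposite order and thereby avoids the gap: first use $l(\nu_\pi)\bmod 2 = V(\nu_\pi)^2$ together with Proposition~\ref{prop:PoincarLMod8} and Lemma~\ref{lem:DefPHTilde} to establish that $\int_\pi V^2$ is the constant $\sigma(F)\equiv\chi(F)\bmod 2$; then apply the total Steenrod square, which fixes constants, and use the correct identity above to conclude $\int_\pi w(-\nu_\pi) = \chi(F)\bmod 2$. Everything else in your write-up (the vanishing of odd Wu classes for oriented $\nu_\pi$, the invocation of Proposition~\ref{prop:PoincarLMod8}, and the endgame case analysis) is fine and indeed matches the paper's argument.
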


This is obvious for fibre bundles, as the vertical tangent bundle has dimension $d$ and so $w_i(-\nu_\pi)$ vanishes for $i>d$. But for fibrations with Poincar{\'e} fibre it does not seem obvious.

\begin{proof}
The Brumfiel--Morgan class $l$ reduces modulo 2 to the square of the Wu class \cite[Theorem I (ii)]{BrumfielMorgan}, so as the $\widetilde{\mathrm{ph}}_i$ reduce to zero modulo 2 for $i>0$ we obtain from Proposition \ref{prop:PoincarLMod8} the identity $\int_\pi V(\nu_\pi)^2 \equiv \sigma(F) \mod 2$ which is also $\chi(F) \mod 2$. As in the proof of Theorem \ref{thm:2LocalPontrjaginFormula} we take the total Steenrod square of this to obtain
$$\chi(F) \mod 2 \equiv \Sq\left(\int_\pi V(\nu_\pi)^2\right) = \int_\pi w(-\nu_\pi)$$
as required.
\end{proof}

\subsection{Multiplicativity of the de Rham invariant}\label{sec:dRmult}

We can make a similar analysis of the de Rham invariant $d$, though it turns out to not be multiplicative in general: however, the failure to be multiplicative gives an interpretation of the cohomology classes $r_1$ and $r_3$ appearing in Theorem \ref{thm:FateofdR}. See \cite{AlexdR} for a special case, and \cite{BSdR} for a related result on the $\bR$-semicharacteristic.

\begin{proposition}\label{prop:dRmult}
Let $F^d \to E^{4k+1} \overset{\pi}\to B^{4k+1-d}$ be an oriented block bundle of oriented topological manifolds. If $d$ is odd then $d(E)=\sigma(B) \cdot d(F) $. If $d$ is even with monodromy $\phi : B \to B\Aut(H^{d/2}(F;\bZ)/tors, \lambda)$ then
$$d(E) = d(B) \cdot \sigma(F) + \int_B w(TB) \cdot \phi^*\Sq(r_{2-(-1)^{d/2}}).$$
\end{proposition}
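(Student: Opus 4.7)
The plan is to compute $d(E)$ directly via the Taylor--Williams characteristic class. Recall from Section~\ref{sec:LThyBG} that the composition $r\circ\sigma: \MSTop \to \bigoplus_j \H\bZ/2[4j+1]$ corresponds under the Thom isomorphism to $\kappa := \sum_{i\geq 0} V_{2i}\Sq^1V_{2i} \in H^{4*+1}(BSTop;\bZ/2)$, so that for any oriented topological $(4k+1)$-manifold $M$ we have $d(M) = \int_M \kappa(TM)$. I would combine this with the stable splitting $TE \cong_s \pi^*TB \oplus T^s_\pi E$ from \cite[Section~2.4]{HLLRW}.

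The first step is the characteristic class identity
\begin{equation*}
\kappa(X\oplus Y) \;=\; \kappa(X)\,V(Y)^2 + V(X)^2\,\kappa(Y) + \Sq^1 C(X,Y)
\end{equation*}
for an explicit polynomial $C(X,Y)$ in Wu classes. This would be proved by expanding $V_{2i}(X\oplus Y) = \sum_{a+b=i} V_{2a}(X)V_{2b}(Y)$ and $\Sq^1 V_{2i}(X\oplus Y)$, isolating the ``diagonal'' contributions where $a=c$ (which assemble to $\kappa(X)V(Y)^2 + V(X)^2\kappa(Y)$), and pairing the ``off-diagonal'' contributions $(a,c)$ and $(c,a)$, so that via the Leibniz rule for $\Sq^1$ they combine into $\Sq^1$ of symmetric monomials in $V_{2a}(X)V_{2c}(X)V_{2(i-a)}(Y)V_{2(i-c)}(Y)$. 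Since topological manifolds have vanishing odd Wu classes, $V_1(TE)=0$, and so Wu's formula gives $\int_E \Sq^1 y = \int_E V_1(TE)\,y = 0$, killing the coboundary term on integration. The projection formula applied to $TE \cong_s \pi^* TB \oplus T^s_\pi E$ then yields
\begin{equation*}
d(E) \;=\; \int_B \kappa(TB)\cdot\int_\pi V(T^s_\pi E)^2 \;+\; \int_B V(TB)^2\cdot\int_\pi \kappa(T^s_\pi E).
\end{equation*}

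The two fibre integrals would be evaluated using the Family Signature Theorem. By Theorem~\ref{thm:FSTLThy} composed with $r$ and using Theorem~\ref{thm:FateofdR}, $\int_\pi \kappa(T^s_\pi E) = \phi^*(r\circ inc)$ equals $\phi^* r_{2-(-1)^{d/2}}$ (in degree~$1$ or $3$) for even $d$, the constant $d(F)\in H^0(B;\bZ/2)$ for $d\equiv 1\bmod 4$ (by the constant-map interpretation of Corollary~\ref{cor:OddFSIsTrivial}), and zero for $d\equiv 3\bmod 4$. Similarly, $\int_\pi V(T^s_\pi E)^2 \equiv \int_\pi\mathscr{L}(T^s_\pi E)\bmod 2$, which by Theorem~\ref{thm:2LocalPontrjaginFormula} and Lemma~\ref{lem:DefPHTilde} reduces to $\sigma(F)\in H^0(B;\bZ/2)$ when $d\equiv 0\bmod 4$ and to zero otherwise. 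Finally, I would apply Wu's formula $\Sq V(TB) = w(TB)$ together with the integral identity $\int_B \Sq(z)=\int_B V(TB)\cdot z$ (top-degree) to rewrite $\int_B V(TB)^2\,y = \int_B \Sq(V(TB)\,y) = \int_B w(TB)\,\Sq(y)$, converting the second term into the form appearing in the proposition. Case-by-case assembly modulo~$4$ of $d$ produces: $d\equiv 0$ gives $\sigma(F)d(B) + \int_B w(TB)\phi^*\Sq(r_1)$; $d\equiv 2$ gives $\int_B w(TB)\phi^*\Sq(r_3)$ (with $d(B)\sigma(F)=0$ by parity); $d\equiv 1$ gives $d(F)\int_B V(TB)^2 = \sigma(B)d(F)\bmod 2$; and $d\equiv 3$ gives $0$. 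The main obstacle is verifying the off-diagonal pairing in the characteristic class identity, which requires careful bookkeeping of the Leibniz-rule cancellations; the subsequent degree-parity case analysis is then routine.
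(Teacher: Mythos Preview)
Your proposal is correct and follows essentially the same approach as the paper: express $d(E)$ via the Wu-class formula, split using $TE \cong_s \pi^*TB \oplus T^s_\pi E$, evaluate the two fibre integrals via the Family Signature Theorem (Theorems \ref{thm:LinkingTriv}, \ref{thm:FateofdR}, \ref{thm:2LocalPontrjaginFormula}), and convert $V^2$ to $w\cdot\Sq$ using Wu's formula. The paper's organisation is marginally slicker in that it works with the total product $V(\nu_E)\cdot\Sq^1 V(\nu_E)$ rather than $\kappa$ directly: since $V$ is multiplicative, Leibniz gives a clean two-term expansion with no $\Sq^1$-coboundary, and the separation $V\cdot\Sq^1 V = \kappa + \Sq^1(\sum_{i<j}V_{2i}V_{2j})$ is only needed once, for the fibre factor---this sidesteps the ``main obstacle'' you identify. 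Also take care that the Wu class convention is the one appropriate to the Thom isomorphism for the normal bundle (the paper writes $V(\nu_\pi)$ rather than $V(T^s_\pi E)$); the substance is unchanged but the notation should be aligned.
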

\begin{proof}
We have $V(\nu_E) = 1 + V_2(\nu_E) + V_4(\nu_E) + \cdots + V_{2k}(\nu_E)$, so
$$d(E) = \int_E V_{2k}(\nu_E) \cdot \Sq^1 V_{2k}(\nu_E) = \int_E V(\nu_E) \cdot \Sq^1 V(\nu_E).$$
The stable isomorphism $TE \cong_s \pi^* TB \oplus T_\pi^s E$ gives $V(\nu_E) = \pi^*V(\nu_B) \cdot V(\nu_\pi)$, using which we can rewrite this as
\begin{align*}
&\int_E \pi^*(V(\nu_B) \cdot \Sq^1 V(\nu_B)) \cdot V(\nu_\pi)^2 + \pi^*(V(\nu_B)^2) \cdot V(\nu_\pi) \cdot \Sq^1 V(\nu_\pi)\\
&\quad= \int_B V(\nu_B) \cdot \Sq^1 V(\nu_B) \int_\pi V(\nu_\pi)^2 + \int_E \pi^*(V(\nu_B))^2 \cdot V(\nu_\pi) \cdot \Sq^1 V(\nu_\pi).
\end{align*}
As in the proof of Corollary \ref{cor:FibSWClass} we have $\int_\pi V(\nu_\pi)^2 \equiv \sigma(F) \mod 2$ and so the first term simplifies to $d(B) \cdot \sigma(F)$ (which is trivial unless $d \equiv 0 \mod 4$). Abbreviating $V_i := V_i(\nu_\pi)$ we have
\begin{align*}
V(\nu_\pi) \cdot \Sq^1 V(\nu_\pi) &= \sum_{i} V_{2i} \cdot \Sq^1 V_{2i} + \sum_{i < j} V_{2i} \cdot \Sq^1 V_{2j} + V_{2j} \cdot \Sq^1 V_{2i}\\
&= \sum_{i} V_{2i} \cdot \Sq^1 V_{2i} + \Sq^1 \left(\sum_{i < j} V_{2i} \cdot V_{2j}\right)
\end{align*}
and so the second term in the above equation can be written as
$$\int_B V(\nu_B)^2 \cdot \int_\pi \sum_{i} V_{2i} \cdot \Sq^1 V_{2i} + \int_E \Sq^1 \left(\pi^*(V(\nu_B))^2 \cdot \sum_{i < j} V_{2i} \cdot V_{2j}\right).$$
The latter term is zero, as $E$ is by assumption oriented so $\Sq^1$ into its top cohomology is zero. Furthermore $\int_\pi \sum_{i} V_{2i} \cdot \Sq^1 V_{2i}$ is the family de Rham class, so is equal to the scalar $d(F)$ if $d$ is odd by Theorem \ref{thm:LinkingTriv}, and is equal to the class $r_1$ or $r_3$ if $d$ is even by Theorem \ref{thm:FateofdR}. Combining this with the fact that $\int_B V(\nu_B)^2 \equiv \sigma(B) \mod 2$ if $B$ is even-dimensional, and with $\int_B V(\nu_B) \cdot - = \int_B \Sq(-)$, gives the claimed formulas.
\end{proof}

\begin{example}
Let $\mathbb{CP}^2 \to E^{4k+1} \overset{\pi}\to \mathbb{RP}^{4k-3}$ be the standard fibering of the Dold manifold, i.e.\ $E = S^{4k-3} \times_{C_2} \mathbb{CP}^2$ where the involution acts antipodally on $S^{4k-3}$ and by complex conjugation on $\mathbb{CP}^2$. The generator of $\pi_1(\mathbb{RP}^{4k-3}) = \bZ/2$ acts on the intersection form $(\bZ, (1))$ of $\mathbb{CP}^2$ by a sign, so it follows as in Remark \ref{rem:r1Calc} that $\phi^* r_1 = x \in \bZ/2[x]/(x^{4k-2}) = H^*(\mathbb{RP}^{4k-3};\bZ/2)$. We have
$$
\int_{\mathbb{RP}^{4k-3}} w(T\mathbb{RP}^{4k-3})  \cdot \Sq(x) = \int_{\mathbb{RP}^{4k-3}} (1+x)^{4k-2} \cdot (x+x^2) = 1,
$$
and so $d(E) = d(\mathbb{RP}^{4k-3})\cdot \sigma(\mathbb{CP}^2)+1 = 1$, as $\mathbb{RP}^{4k-3}$ has de Rham invariant 0.
\end{example}

In contradistinction with this example, the following shows that for topological \emph{fibre} bundles with fibres of dimension $2 \mod 4$ the invariant $\phi^*r_3$ is trivial (so e.g.\ the de Rham invariant is multiplicative for such bundles, by Proposition \ref{prop:dRmult}).

\begin{proposition}
If $F^{4k+2} \to E \overset{\pi}\to B$ is an oriented topological fibre bundle, with $\phi : B \to BSp_{2g}(\bZ)$ classifying the associated local system of symplectic forms, then $\phi^*r_3=0 \in H^3(B;\bZ/2)$.
\end{proposition}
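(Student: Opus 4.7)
The plan is to combine the Family Signature Theorem (Theorem \ref{thm:FSTLThy}) with Theorem \ref{thm:FateofdR} and the Thom-isomorphism identification from Section \ref{sec:LThyBG} of $r\circ\sigma$ with the characteristic class $\sum_{i\geq 0} V_{2i}\cdot\Sq^1 V_{2i}\in H^*(BSTop;\bZ/2)$. Since $n = 2k+1$ is odd, Theorem \ref{thm:FateofdR} places the relevant image of $r\circ inc$ in degree $3$, and of the summands $V_{2i}(\nu_\pi)\Sq^1 V_{2i}(\nu_\pi)$ pulled back to $E$ only the $i = k+1$ term contributes to degree $d+3 = 4k+5$. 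Combined, these reduce the claim to the fibre-integral identity
\[
\phi^*(r_3) \;=\; \int_\pi V_{2k+2}(\nu_\pi)\cdot \Sq^1 V_{2k+2}(\nu_\pi) \;=\; 0 \;\in\; H^{3}(B;\bZ/2).
\]

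The crucial feature distinguishing a fibre bundle from a mere block bundle is that the vertical tangent microbundle $T_\pi E$ is honestly of rank $d = 4k+2$, classified by $E\to BSTop(d)$. Using $V(\nu_\pi)V(T_\pi E) = 1$ together with $V(T_\pi E) = \Sq^{-1}w(T_\pi E)$, one may express $V_{2k+2}(\nu_\pi)$ as a polynomial in the Stiefel--Whitney classes $w_i(T_\pi E)$ with $i\leq d$, subject to the rank bound $w_j(T_\pi E) = 0$ for $j > d$.

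The main obstacle is then to verify the vanishing of the fibre integral. For $k = 0$ the argument is transparent: one computes $V_2(\nu_\pi) = w_2(T_\pi E)$, and since $T_\pi E$ is classified by $BSO(2)\simeq \mathbb{C}P^\infty$, on which $w_2 = c_1\bmod 2$ is the reduction of an integer class, one has $\Sq^1 V_2(\nu_\pi) = 0$ and the integrand is already zero on $E$. For $k \geq 1$ no such universal cancellation takes place (for example at $k=1$ one finds $V_4(\nu_\pi)\cdot \Sq^1 V_4(\nu_\pi) = w_4(T_\pi E)\cdot w_5(T_\pi E)$, a class which does not vanish on $BSTop(6)$). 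Instead I would use that the integrand restricts to zero on each fibre (because $H^{4k+5}(F;\bZ/2) = 0$), placing it in Serre filtration $\geq 1$, and combine this with the Riemann--Roch-type identity $\int_\pi \Sq^1 x = \Sq^1 \int_\pi x$ (valid for oriented fibre bundles as $w_1(T_\pi E) = 0$) and the universal Wu-formula relations among the $w_i(T_\pi E)$ together with the rank bound, to eliminate the $(3,d)$-component of the product in the Serre spectral sequence of $\pi$. Carrying out this elimination in the general case is where the technical difficulty lies.
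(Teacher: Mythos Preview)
Your setup is correct: by the Family Signature Theorem and Theorem~\ref{thm:FateofdR} one does have $\phi^*r_3 = \int_\pi V_{2k+2}(\nu_\pi)\cdot\Sq^1 V_{2k+2}(\nu_\pi)$, and you have correctly isolated the rank bound $w_j(T_\pi E)=0$ for $j>4k+2$ as the distinguishing feature of a genuine fibre bundle. Your $k=0$ case is also fine. But the general case is not proved: the Serre-filtration sketch is vague, and it is not clear how the Riemann--Roch identity for $\Sq^1$ together with the rank bound would force the $(3,d)$-component to vanish. As you yourself observe, the integrand does not vanish universally on $BSTop(4k+2)$ for $k\geq 1$, so some further input about $E$ itself is needed, and you have not supplied it.

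The paper's argument fills this gap by a different route. First reduce to $B$ a closed (not necessarily orientable) $3$-manifold, so that $E$ is a closed $(4k+5)$-manifold and it suffices to show $\int_E V_{2k+2}\cdot\Sq^1 V_{2k+2}=0$. On a $3$-manifold $V(\nu_B)=1+x$ with $|x|=1$, and the multiplicativity $V(\nu_E)=V(\nu_B)\cdot V(\nu_\pi)$ then forces $V_{2i}(\nu_\pi)=0$ for $i>k+1$. With these higher Wu classes gone, the identity $w(T_\pi E)=\Sq(V(\nu_\pi))$ in degree $4k+3$ collapses to the single term $\Sq^{2k+1}V_{2k+2}$, so the rank bound gives $\Sq^{2k+1}V_{2k+2}=0$. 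An Adem relation $\Sq^2\Sq^{2k+1}=\Sq^{2k+2}\Sq^1+(\text{possibly }\Sq^{2k+3})$ together with instability then yields $\Sq^{2k+2}\Sq^1 V_{2k+2}=0$. Finally, since $\bar V_{2k+2}(\nu_E)=V_{2k+2}(\nu_\pi)$, the Wu formula on the manifold $E$ gives
\[
\int_E V_{2k+2}\cdot\Sq^1 V_{2k+2} = \int_E \bar V_{2k+2}\cdot\Sq^1 V_{2k+2} = \int_E \Sq^{2k+2}\Sq^1 V_{2k+2} = 0.
\]
The two ideas you are missing are (i) the reduction to $B$ a $3$-manifold, which both makes $E$ a manifold (so one can invoke Wu's formula on $E$) and kills $V_{2i}(\nu_\pi)$ for $i>k+1$, and (ii) rewriting the cup product $V_{2k+2}\cdot\Sq^1 V_{2k+2}$ as $\Sq^{2k+2}\Sq^1 V_{2k+2}$ via that Wu formula, after which the Adem relation and the rank bound finish the job.
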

\begin{proof}
Writing $V(\nu_\pi) = 1 + {V}_2 + {V}_4 + V_6 + \cdots$, by Theorem \ref{thm:FateofdR} and the Family Signature Theorem we have $\phi^*r_3 = \int_\pi V_{2k+2} \cdot \Sq^1 V_{2k+2}$. Without loss of generality we may suppose that $B$ is a (not necessarily orientable) 3-manifold, whereupon we wish to show that $\int_B \phi^*r_3 = \int_E V_{2k+2} \cdot \Sq^1 V_{2k+2}$ vanishes. 

The Wu class of $B$ has the form $V(\nu_B) = 1 + x$ for $x$ of degree 1, so writing $V(\nu_E) = 1 + \bar{V}_1 + \bar{V}_2 + \cdots + \bar{V}_{2k+2}$ the identity $V(\nu_E) = V(\nu_B) \cdot V(\nu_\pi)$ implies that $\bar{V}_{2i} = V_{2i}$ and $\bar{V}_{2i+1} = x \cdot V_{2i}$, so in particular $V_{2i}=0$ for $i>k+1$. Thus
$$w(T_\pi E) = \Sq(V(\nu_\pi)) = 1 + (V_2) + (\Sq^1 V_2) + (V_4 + V_2^2) + \cdots + (\Sq^{2k+1} V_{2k+2}) + (V_{2k+2}^2)$$
and as $T_\pi E$ has dimension $4k+2$ it follows that $0= w_{4k+3}(T_\pi E) = \Sq^{2k+1} V_{2k+2}$. Using the Adem relation $$\Sq^2 \Sq^{2k+1} = \Sq^{2k+2} \Sq^1 + \begin{cases}
\Sq^{2k+3} & k \text{ odd}\\
0 & k \text{ even}
\end{cases}$$
and instability, we deduce that $\Sq^{2k+2} \Sq^1 V_{2k+2}=0$ too. But then by definition of the Wu classes of $E$ and the relation $\bar{V}_{2k+2}={V}_{2k+2}$ we find that
$$0 = \int_E \Sq^{2k+2} \Sq^1 V_{2k+2} = \int_E \bar{V}_{2k+2} \Sq^1 V_{2k+2} = \int_E V_{2k+2} \Sq^1 V_{2k+2}$$
as required.
\end{proof}

\subsection{Integrality}
For an oriented topological block bundle $\pi : E \to |K|$ with $d$-dimensional fibres the discussion in Section \ref{sec:AddendaEven} shows that there is a 2-integral refinement $\mathcal{L}(T_\pi^s E) \in H^{*}(E ; \bZ_{(2)})$ of the Hirzebruch $L$-class, and that its fibre integral satisfies
$$\int_\pi \mathcal{L}(T_\pi^s E) = \begin{cases}
 \widetilde{\mathrm{ph}}(\phi^* \xi_\bR) & d\text{ is even}\\
0 & d\text{ is odd}
\end{cases} \in H^{*}(|K|;\bZ_{(2)}).$$

At odd primes $p$ a similar result is available in a range of degrees: for this we can work either with $\KO[\tfrac{1}{2}]$ or $\LL^s(\bZ)$; lets take the former for concreteness. As the first torsion in $\pi_*(S_{(p)})$ is in degree $2p-3$, it follows from the Atiyah--Hirzebruch spectral sequence that there is a unique homotopy class
$$\tau_{\geq 0} \KO_{(p)} \lra \bigoplus_{k = 0}^{\lfloor (2p-4)/4 \rfloor} \H\bZ_{(p)}[4i]$$
which on homotopy groups sends $a^k$ to 1 for $0 \leq k \leq \lfloor (2p-4)/4 \rfloor$. Pulled back along the Sullivan orientation $\Delta_{Top}$ these cohomology classes correspond under the Thom isomorphism to canonical $p$-local classes $\mathcal{L}_k \in H^{4k}(BSTop ; \bZ_{(p)})$ defined for $4k < 2p-3$, whose rationalisations are the usual topological $L$-classes. If $d=2n$ then looped $2n$ times and pulled back along $\sign : B\Aut(H_\bR, \lambda) \to \Omega^{\infty+2n} \KO_{(p)}$ these classes correspond to $2^{2k-n} \mathrm{ch}_{2k-n}(\xi)$ by Theorem \ref{thm:KOSigFormula} and its proof (note that $(2k-n)!$ is a $p$-local unit as $2k < p-1$, so $\mathrm{ch}_{2k-n}$ is indeed defined $p$-integrally). Theorem \ref{thm:FSTSullivan} then gives the $p$-integral identity
$$\int_\pi \mathcal{L}_{k}(T_\pi^s E) = \begin{cases}
2^{2k-n} \mathrm{ch}_{2k-n}(\phi^*\xi) & d\text{ is even}\\
0 & d\text{ is odd}
\end{cases} \in H^{*}(|K|;\bZ_{(p)})$$
for $4k < 2p-3$.

\bibliographystyle{amsalpha}
\bibliography{biblio}

\end{document}